\title{Block extensions, local categories, and basic Morita equivalences}
\author{
          TIBERIU COCONE\c T \\
\emph{Faculty of Economics and Business Administration}\\
\emph{Babe\c s-Bolyai University}\\
\emph{ Str. Teodor Mihali, nr.58-60}\\
\emph{ 400591 Cluj-Napoca, Romania}\\
\emph{tiberiu.coconet@math.ubbcluj.ro}\\
         ANDREI MARCUS\\
\emph{Faculty of Mathematics and Computer Science}\\
\emph{Babe\c s-Bolyai University}\\
\emph{Cluj-Napoca, Romania}\\
\emph{andrei.marcus@math.ubbcluj.ro}\\
          CONSTANTIN-COSMIN TODEA\footnote{Corresponding author} \\
 \emph{Technical University of Cluj-Napoca}\\
 \emph{Department of Mathematics, Str. G. Baritiu 25}\\
 \emph{Cluj-Napoca 400027, Romania}\\
 \emph{Simion Stoilow Institute of Mathematics of the Romanian Academy}\\
 \emph{ P.O. Box 1-764,014700}\\
 \emph{ Bucharest, Romania}\\
 \emph{Constantin.Todea@math.utcluj.ro}
}
\date{09.05.2019}
\newcommand{\Hom}{\operatorname{Hom}}
\newcommand{\Res}{\operatorname{Res}}
\newcommand{\Tr}{\operatorname{Tr}}
\newcommand{\End}{\operatorname{End}}
\newcommand{\Ind}{\operatorname{Ind}}
\newcommand{\Aut}{\operatorname{Aut}}
\newcommand{\Out}{\operatorname{Out}}
\newcommand{\Ker}{\operatorname{Ker}}
\newcommand{\Inj}{\operatorname{Inj}}
\newcommand{\Int}{\operatorname{Int}}
\newcommand{\hU}{\operatorname{hU}}
\newcommand{\Br}{\operatorname{Br}}
\newcommand{\Od}{\mathcal{O}}
\newcommand{\Sum}{\displaystyle\sum}
\newtheorem{thm}{Theorem}[section]
\newtheorem{corollary}[thm]{Corollary}
\newtheorem{lemma}[thm]{Lemma}
\theoremstyle{definition}
\newtheorem{proposition}[thm]{Proposition}
\newtheorem{definition}[thm]{Definition}
\newtheorem{nim}[thm]{}
\newtheorem{remark}[thm]{Remark}
\begin{document}
\maketitle
Keywords: group algebras, block extensions, defect groups, group graded algebras, local categories, nilpotent block, inertial block, basic Morita equivalence

2010 Mathematics Subject Classification: 20C20, 16W50 16D90

\begin{abstract}
Let $(\mathcal{K},\mathcal{O},k)$ be a $p$-modular system with $k$ algebraically closed, let $b$ be a block of the normal subgroup $H$ of $G$ having defect pointed group $Q_\delta$ in $H$ and $P_\gamma$ in $G$, and consider the block extension $b\mathcal{O}G$. One may attach to $b$ an extended local category $\mathcal{E}_{(b,H,G)}$, a group  extension $L$ of $Z(Q)$ by $N_G(Q_\delta)/C_H(Q)$ having $P$ as a Sylow $p$-subgroup, and a cohomology class $[\alpha]\in H^2(N_G(Q_\delta)/QC_H(Q),k^\times)$. We prove that these objects are invariant under the $G/H$-graded basic Morita equivalences. Along the way, we give alternative proofs of the results of K\"ulshammer and Puig (1990), Puig and Zhou (2012) on extensions of nilpotent blocks. We also deduce by our methods a result of Zhou (2016) on $p'$-extensions of inertial blocks.
\end{abstract}

\section{Introduction}\label{sec1}

This paper is an effort to unify and simplify  results involving nilpotent or inertial blocks of normal subgroups and the basic Morita equivalences introduced by L.~Puig \cite{PuLo}. Recall that a block algebra is called inertial in \cite{Pu2} if it is basic Morita equivalent to its Brauer correspondent. Nilpotent blocks are the particular case when the inertial quotient is trivial. The structure of the source algebra of an extension of a nilpotent block has been determined by B.~K\"ulshammer and L.~Puig \cite[Theorem 1.12]{KuPu}, together with a strong statement of uniqueness of a group controlling the fusion    \cite[Theorem 1.8]{KuPu}. Their results significantly extend the result of \cite{Li}, and both theorems have been given simplified proofs in \cite[Theorems 3.14 and 3.5]{PuZhIII}. In a similar fashion, Y.~Zhou \cite{Zhou1} has determined the source algebra of a $p'$-extension of an inertial block.

We treat here these results in the framework of group graded basic Morita equivalences  introduced in \cite{CoMa}, and further investigated in \cite{CoMaTo}. To summarize our results, let us introduce some notation. Let $(\mathcal{K},\mathcal{O},k)$ be a $p$-modular system,  with $k=\mathcal{O}/J(\mathcal{O})$ algebraically closed (although this assumption is not needed everywhere).  Let $H$ be a normal subgroup of a finite group $G$, let $\bar G=G/H$, and let $b$ be a block of $\mathcal{O}H$, which may be assumed to be $G$-invariant. Let $Q_\delta$ be a defect pointed subgroup of $H_{\{b\}}$, and $P_\gamma$ a defect pointed subgroup of $G_{\{b\}}$, such that $Q_\delta\le P_\gamma$ and $Q=P\cap H$.

Let $b_\delta$ be the block of $\mathcal{O}C_H(Q)$ with defect group $Z(Q)$ determined by $Q_\delta$, so $N_G(Q_\delta)$ is the stabilizer of $b_\delta$ in $N_G(Q)$. We denote $E=E_G(Q_\delta)=N_G(Q_\delta)/C_H(Q)$ and $\tilde E=N_G(Q_\delta)/QC_H(Q)$. Recall that $b_\delta$ is also a block of $\mathcal{O}QC_H(Q)$ with defect group $Q$, and it has a unique simple module $\bar V$. Since $QC_H(Q)\trianglelefteq N_G(Q_\delta)$, the Clifford extension of $\bar V$ with respect to this situation gives a $2$-cocycle $\alpha\in Z^2(\tilde E,k^\times)$.

As in \cite[Theorem 1.8]{KuPu} and \cite[Theorem 3.5]{PuZhIII}, the block $b_\delta$ determines a group extension $L$ of $Z(Q)$ by $E$, having $P$ as a Sylow $p$-subgroup, such that the conjugation actions of $N_G(Q_\delta)$ on $Q$ and of $L$ on $Q$ are strongly related. We may therefore consider (with some  abuse of notation) the twisted group algebra $\mathcal{O}_\alpha L$.  Note that we do not assume here that the block $b$ is nilpotent.

\begin{thm} \label{thmA} Consider the block extension $A=b\mathcal{O}G$   and the $\bar G$-graded crossed product $A_\delta=jAj$, where $j\in \delta$. Assume that $B=b\mathcal{O}H$ is an inertial block. Then the following statements hold.

{\rm a)} The bimodule inducing the Morita equivalence between $B$ and $\mathcal{O}_\alpha(Q\rtimes \tilde E_H(Q_\delta))$ is $\bar G$-invariant.

{\rm b)}  {\rm(\cite[Theorem 1.12]{KuPu} and \cite[Theorem 3.14]{PuZhIII})} If $b$ is nilpotent (hence $\tilde E\simeq \bar G$), then there is a $\bar G$-graded basic Morita equivalence between $A$ and $\mathcal{O}_\alpha L$.

{\rm c)}  {\rm(\cite[Theorem]{Zhou1})} If $p$ does not divide the order of $\bar G$ (hence $Q_\delta=P_\gamma$),  then $A_\delta$ is an $\tilde E$-graded algebra with identity component $C_\delta$ Morita equivalent to $\mathcal{O}Q$, and there is a $\bar G$-graded basic Morita equivalence between $A$ and $\mathcal{O}_\alpha(Q\rtimes \tilde E_G(Q_\delta))$. Moreover the Clifford extension of the unique simple $C_\delta$-module is isomorphic to the Clifford extension of $\bar V$.
\end{thm}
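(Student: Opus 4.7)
The plan is to leverage the inertial hypothesis on $B=b\mathcal{O}H$ (namely that $B$ is basic Morita equivalent to its Brauer correspondent), to realize this equivalence concretely at the level of the source algebra $B_\delta=jBj$, and then to extend it to the block extension $A$ via the $\bar G$-graded Morita machinery developed in \cite{CoMa, CoMaTo}.

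First, for part (a), I would realize the inertial Morita equivalence $B\sim \mathcal{O}_\alpha(Q\rtimes \tilde E_H(Q_\delta))$ via a strict embedding of this twisted group algebra into $B_\delta$, canonically attached (up to inner automorphism) to the pointed group $Q_\delta$; the inducing bimodule is then read off as $Bj\otimes_{B_\delta}(\cdot)$. Because $b$ is $G$-invariant, any $g\in N_G(Q_\delta)$ acts on $B_\delta$ by conjugation and must preserve this embedding up to inner automorphism — this is exactly the content of the extension of the inertial quotient from $\tilde E_H(Q_\delta)$ to $\tilde E=\tilde E_G(Q_\delta)$. Translating back into bimodule language, the inducing bimodule $M$ satisfies ${}^gM\simeq M$ as $(B,\mathcal{O}_\alpha(Q\rtimes \tilde E_H(Q_\delta)))$-bimodules for every $g\in G$, which is the desired $\bar G$-invariance.

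For part (b), when $b$ is nilpotent we have $\tilde E_H(Q_\delta)=1$ and $\tilde E\simeq \bar G$, so part (a) provides a $\bar G$-invariant basic Morita equivalence $B\sim \mathcal{O}Q$. By the extension principle for $\bar G$-invariant Morita equivalences in a graded situation, this lifts to a $\bar G$-graded basic Morita equivalence between $A$ and a $\bar G$-crossed product of $\mathcal{O}Q$; one identifies this crossed product with $\mathcal{O}_\alpha L$ by matching the extension $L$ of $Z(Q)$ by $E\simeq \bar G$ and the cocycle $\alpha$ coming from the Clifford extension of $\bar V$, recovering \cite[Theorem 1.12]{KuPu}. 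For part (c), the assumption $p\nmid |\bar G|$ forces $P=Q$, so $A_\delta=A_\gamma$ inherits a $\bar G$-graded $Q$-interior structure, refined by an $\tilde E$-grading with identity component $C_\delta$. The inertial hypothesis together with part (a) gives $C_\delta\sim \mathcal{O}Q$, and the unique simple $C_\delta$-module has Clifford extension matching that of $\bar V$ because both are controlled by the same pointed-group data on $Q_\delta$. A second application of the $\bar G$-graded lifting machinery then yields the basic Morita equivalence $A\sim \mathcal{O}_\alpha(Q\rtimes \tilde E_G(Q_\delta))$, recovering \cite[Theorem]{Zhou1}.

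The main obstacle is part (a): establishing $\bar G$-invariance of the inducing bimodule requires a precise uniqueness statement for the source-algebra embedding $\mathcal{O}_\alpha(Q\rtimes \tilde E_H(Q_\delta))\hookrightarrow B_\delta$, and a careful tracking of how the normal inclusion $\tilde E_H(Q_\delta)\trianglelefteq \tilde E$ is realized inside $\mathrm{Aut}(B_\delta)/\mathrm{Inn}(B_\delta)$. Once (a) is in hand, parts (b) and (c) become essentially formal consequences of the graded extension machinery, with (c) additionally requiring the Clifford-extension comparison, which follows from the common source-algebra description of the two simple modules involved.
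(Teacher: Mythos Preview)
Your proposal captures the general architecture but has a genuine gap in part (b), and underestimates the work in both (b) and (c).

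The central issue is your claim that (b) and (c) are ``essentially formal consequences of the graded extension machinery'' once (a) is established. This is precisely what fails. $\bar G$-invariance of the inducing bimodule is necessary but not sufficient for it to extend to the diagonal subalgebra; the obstruction is the Clifford extension of the bimodule, which must split. The paper's key technical device is Lemma~\ref{lemaDade*}, whose hypothesis (5) requires the bimodule to extend already over a Sylow $p$-subgroup of the grading group. In the nilpotent case the grading group is $\bar G\simeq\tilde E$ with Sylow $p$-subgroup $\bar P=P/Q$, and there is no reason the bimodule coming from (a) (built from the endopermutation $\mathcal{O}Q$-module $W$ with $S_\delta=\End_{\mathcal{O}}(W)$) extends over $\bar P$. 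The paper says so explicitly in the Introduction: ``in b), the original bimodule does not extend in general, it must be replaced by another one.'' Concretely, the paper passes to the larger nilpotent block $b\mathcal{O}PH$, takes its source module $V_\gamma$ (which is already an $\mathcal{O}P$-module), and uses $M_\gamma=V_\gamma\otimes\mathcal{O}Q$ instead. This substitution is what makes hypothesis (5) hold; the remaining hypotheses (notably (4)) are then verified via Lemma~\ref{l:Cliffordext}, which identifies the residual Clifford extension with $k_\alpha\tilde E$ by a nontrivial comparison with the Clifford extension of an indecomposable endopermutation module and Dade's splitting theorem. None of this is visible in your sketch.

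For part (c) there is less of a gap --- since $\bar G$ is a $p'$-group, hypothesis (5) of Lemma~\ref{lemaDade*} is vacuous and the original bimodule does extend --- but the refinement of the $\bar G$-grading of $A_\delta$ to an $\tilde E$-grading is not automatic. The paper establishes it using the injective $\tilde E$-graded algebra map $A'_{\delta'}\to A_\delta$ of Proposition~\ref{p:tibi} (which needs the $p'$-hypothesis) together with the grading-refinement criterion of \ref{ss:grade-refine}, and hypothesis (4) again requires Dade's theorem on the Clifford extension of $W$. Your proposal for (a) via uniqueness of the embedding up to inner automorphism is in the right spirit, but the paper's argument is more concrete: it uses that $W$ can be chosen $N_G(Q_\delta)$-stable (citing \cite[3.6]{Zhou1}) and compares conjugation by homogeneous units $a_x$, $a'_x$ through the $(G,\bar G)$-fusion isomorphisms of \cite{CoMaTo}.
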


We obtain the Morita equivalences in b) and c) by extending  a bimodule inducing the given Morita equivalence to a certain $\bar G$-graded diagonal subalgebra, as in \cite[Theorem 5.1.2]{Ma}. Note that in c), the original bimodule extends directly, while in b), the original bimodule does not extend in general, it must be replaced by another one.

We do not know whether there is a common generalization of b) and c). Zhou \cite{Zhou2} considered a particular case of a $p$-extension of an inertial block, but the Morita equivalence obtained there  is not basic.

The extended local category $\mathcal{E}_{(b,H,G)}$ of $(G,\bar G)$-fusions was introduced in \cite[3.4]{PuZhIII}. The objects are pointed subgroups of $P_\gamma$, and the morphisms are conjugations by elements $x\in G$, also taking into account the class $\bar x\in \bar G$.

Now let $b'\mathcal{O}G'$ be another block extension, where $H'\trianglelefteq G'$, $b'\in Z(\mathcal{O}H')$ is a $G'$-invariant block, and $G'/H'\simeq G/H=\bar G$. We use ``{ $'$ }" to denote the objects associated with $b'$.

\begin{thm} \label{thmB} Assume that there is a $\bar G$-graded basic Morita equivalence between $A=b\mathcal{O}G$ and $A'=b'\mathcal{O}G'$. Then, by identifying $P$ and $P'$, we have:

{\rm a)} The categories $\mathcal{E}_{(b,H,G)}$ and $\mathcal{E}_{(b',H',G')}$ are equivalent; in particular, $E\simeq E'$.

{\rm b)} The group extensions $L$ and $L'$ of $Z(Q)$ by $E$ are isomorphic.

{\rm c)} $[\alpha]=[\alpha']$ in $H^2(\tilde E, k^\times)$.
\end{thm}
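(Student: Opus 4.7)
The strategy is to reduce everything to a statement about $\bar G$-graded source algebras. By the structural characterization of $\bar G$-graded basic Morita equivalences developed in \cite{CoMa,CoMaTo}, such an equivalence between $A$ and $A'$ amounts (after a suitable identification of $P$ with $P'$) to a $\bar G$-graded isomorphism of interior $P$-algebras
\[
\varphi: iAi \xrightarrow{\sim} i'A'i',
\]
where $i\in\gamma$ and $i'\in\gamma'$ are primitive idempotents. We will argue that each of the three invariants $\mathcal{E}_{(b,H,G)}$, $L$ and $[\alpha]$ is determined by the $\bar G$-graded interior $P$-algebra $iAi$, and then invoke $\varphi$ to conclude.

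For (a), the objects of $\mathcal{E}_{(b,H,G)}$ are the pointed subgroups $R_\varepsilon\le P_\gamma$, which are in bijection with the local points of $R\le P$ in $iAi$, and the morphisms from $R_\varepsilon$ to $S_\eta$ are pairs $(\bar x, c_x)$ with $x\in G$ realizing ${}^x R_\varepsilon \le S_\eta$. In the source algebra these correspond to conjugations inside $iAi$ by units of $\bar G$-grade $\bar x$ carrying an idempotent representative of $\varepsilon$ into one of $\eta$. Since $\varphi$ preserves both the interior $P$-structure and the $\bar G$-grading, it induces the desired equivalence of categories; restricting to the full subcategory on $Q_\delta$ then yields $E\simeq E'$.

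For (b) and (c), we localize at $Q$ via the Brauer homomorphism: $\Br_Q((iAi)^Q)$ is a $\bar G$-graded algebra in which the local block $b_\delta$ and its unique simple module $\bar V$ sit. Out of this graded local data one reads off the extension $L$ of $Z(Q)$ by $E$, together with its distinguished Sylow $p$-subgroup $P$, via the lifting procedure of \cite[Section 3]{PuZhIII} applied to the conjugation action of $N_G(Q_\delta)$ on $Q$ through $b_\delta$; and one reads off the Clifford cocycle $\alpha\in Z^2(\tilde E,k^\times)$ of $\bar V$ with respect to $QC_H(Q)\trianglelefteq N_G(Q_\delta)$. Applying $\Br_Q$ to $\varphi$ produces a $\bar G$-graded isomorphism that identifies $b_\delta$ with $b'_{\delta'}$ and hence $\bar V$ with $\bar V'$, so $L\simeq L'$ as group extensions of $Z(Q)$ by $E$ with distinguished Sylow $p$-subgroup $P$, and $[\alpha]=[\alpha']$ in $H^2(\tilde E,k^\times)$.

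The main technical obstacle is the first reduction: verifying that a $\bar G$-graded basic Morita equivalence does translate into a $\bar G$-graded source algebra isomorphism compatible with the interior $P$-structure demands careful tracking of the grading on the inducing bimodule and on its endo-permutation source. Once this is in hand, parts (a)--(c) follow from the functoriality of $\Br_Q$ together with the standard Morita-invariance of simple modules and of the Clifford extensions attached to them.
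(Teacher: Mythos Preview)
Your first reduction is incorrect, and everything downstream rests on it. A $\bar G$-graded basic Morita equivalence between $A$ and $A'$ does \emph{not} yield a $\bar G$-graded isomorphism $iAi\simeq i'A'i'$ of interior $P$-algebras. What the definition (recalled in~\ref{subsec82}, following \cite[Definition~4.2]{CoMa}) actually gives is an \emph{embedding}
\[
f:\; A_\gamma \longrightarrow S\otimes A'_{\gamma'}
\]
of $\bar G$-graded $P$-interior algebras, where $S=\End_{\mathcal O}(\ddot N)$ is a Dade $P$-algebra coming from the source of the inducing bimodule. The twist by $S$ is genuinely present: even in the ungraded setting, source algebras of basically Morita equivalent blocks need not be isomorphic, only related through a Dade $P$-algebra. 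So you cannot simply transport local pointed groups, fusions, or Brauer quotients along an isomorphism $\varphi$ that does not exist.

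The paper's proof is organized precisely to handle this two-step structure. For part~(a) it first establishes that $(G,\bar G)$-fusions coincide with $(A,\bar G)$-fusions (Proposition~\ref{prop44}), which is itself nontrivial and is what you are implicitly assuming when you say morphisms in $\mathcal E$ ``correspond to conjugations inside $iAi$ by units of $\bar G$-grade $\bar x$''. It then shows separately that $(A,\bar G)$-fusions are preserved under tensoring with a Dade $P$-algebra (Proposition~\ref{lem117}) and under embeddings (Proposition~\ref{prop214}); combining these with the bijection~(\ref{eq9}) on local pointed groups gives the equivalence $\mathcal E\simeq\mathcal E'$. For parts~(b) and~(c) the paper does not apply $\Br_Q$ to a global isomorphism; instead it invokes \cite[Theorem~1.2]{CoMaTo} to obtain an $\tilde E$-graded basic Morita equivalence at the level of $kN_G(Q_\delta)b_\delta$, then uses invariance of Clifford extensions under graded Morita equivalences \cite[Theorem~5.1.8]{Ma} for $[\alpha]=[\alpha']$, and finally the argument of \cite[Theorem~3.5, Lemma~3.6]{PuZhIII} together with Corollary~\ref{c:cornilp} for $L\simeq L'$. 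Your outline would become viable only after you replace the claimed isomorphism by the embedding $A_\gamma\hookrightarrow S\otimes A'_{\gamma'}$ and supply the two compatibility results that control how the Dade factor $S$ interacts with fusions and with the Brauer construction.
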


In particular, \cite[Theorem 1.8]{KuPu} and \cite[Theorem 3.5]{PuZhIII} follow from Theorem \ref{thmB} and Theorem \ref{thmA}.d). Note that in these papers, the fact that $L$ controls the fusion was first established, and then used to get the Morita equivalence.

The paper is organized as follows. In Section \ref{sec-prelim} we recall the main concepts and facts needed for the proofs of the main results. In Section \ref{sec2} we give the construction of  the group extension $L$ of $Z(Q)$ by $E$, and of the twisted group algebra $\mathcal{O}_\alpha L$, by using the properties of the block extension $\mathcal{O}N_G(Q_\delta)b_\delta$. In Section  \ref{sec4} we prove a variant of the Fong-Reynolds reduction, which reduces us to the case when $b_\delta$ is $N_G(Q)$-invariant. In Section \ref{sec3}  we show that under a certain condition, there is an injective algebra map from the source algebra of the block extension $\mathcal{O}N_G(Q_\delta)b_\delta$ to the source algebra of $\mathcal{O}Gb$. In Section  \ref{sec5} we prove the useful Lemma \ref{lemaDade*} which allows to construct group graded Morita equivalences, based on some results of E.C.~Dade on extendibility of modules. This is applied in Section \ref{sec7}, where we prove the statements of Theorem \ref{thmA}. We discuss $(A,\bar G)$-fusions and $(G,\bar G)$-fusions in Section \ref{s:fusions}, and prove Theorem \ref{thmB} in the last section.

\section{Preliminaries and quoted results}\label{sec-prelim}

In this paper, all algebras and modules, considered as left modules, are finitely generated. We introduce in this section our notation and recall some basic concepts and facts. Our assumptions are standard, and we refer to \cite{The} and \cite{Ma}  for general results on block theory and group graded algebras, respectively.

\begin{nim} Throughout this paper $p$ is a prime, and let $k$ be the residue field, with characteristic $p$, of a discrete valuation ring $\mathcal{O}$. For the moment, we do not assume that $k$ is algebraically closed, some of the results in the next sections will require this assumption.

Let   $H$ be a normal subgroup of a finite group $G$.  Then the group algebra $\Od H$ is a $p$-permutation $G$-algebra. Let $b$ be a $G$-invariant block of $\mathcal{O}H$, hence, in particular, $b$ remains a primitive idempotent in $(\Od H)^G$.  Set the notations
\[\bar G:=G/H, \qquad A:=\Od Gb, \qquad B:=\Od Hb,\]
and we usually regard $A$ as a $\bar G$-graded $\mathcal{O}$-algebra with identity component $B$.
\end{nim}

\begin{nim} \label{n:source} Take a defect pointed group $P_{\gamma}$  of $G_{\{b\}}$. Then, by \cite[Proposition 5.3]{KuPu}, there is  a defect pointed group $Q_{\delta}$ of $H_{\{b\}}$ on the $H$-interior algebra $\Od H$ such that $Q_\delta\leq P_{\gamma}$;  in this case we may assume that $Q=P\cap H$.

The Frattini argument implies that $G=HN_G(Q_{\delta})$, hence \[\bar G \simeq N_G(Q_{\delta})/N_H(Q_{\delta}).\] Let $i\in \gamma$ and $j\in\delta$ be source idempotents such that $j=ij=ji$. Set
\[A_{\gamma}=iAi, \qquad A_{\delta}=jAj, \qquad B_{\delta}=jBj, \qquad B_{\gamma}=iBi.\]
By \cite[Proposition 3.2]{Ma2}, both $A_\delta$ and $A_\gamma$ are strongly $\bar G$-graded $\mathcal{O}$-algebras, and we have $\bar G$-graded Morita equivalences between $A$, $A_\delta$ and $A_\gamma$.
\end{nim}

\begin{nim} Recall (see \cite[\S\ 40]{The}) that there is a maximal $(H,b)$-Brauer pair, denoted $(Q,b_{\delta})$, associated with $Q_{\delta}$. Here $b_{\delta}$ is a block of $kC_H(Q)$ with defect group $Z(Q)$. It is well known that $b_{\delta}$ lifts uniquely to a block (still denoted by $b_{\delta}$) of $\mathcal{O}C_H(Q)$ with defect group $Z(Q)$, and that we have the equality $N_H(Q_{\delta})=N_H(Q,b_{\delta})$. Moreover, $b_{\delta}$ is also a block of $\mathcal{O}QC_H(Q)$ with defect group $Q$. As in \cite[Section 4]{CoMaTo}, we also denote
\[E:=E_G^{\bar G}(Q_\delta)=N_G(Q_{\delta})/C_H(Q), \quad \tilde{E}:=\tilde{E}^{\bar G}_G(Q_\delta) =N_G(Q_{\delta})/QC_H(Q),\]
\[E_H(Q_{\delta})=N_H(Q_{\delta})/C_H(Q), \qquad \tilde{E}_H(Q_{\delta})=N_H(Q_{\delta})/QC_H(Q).\]
These groups will be regarded in Section  \ref{s:fusions} as automorphism groups in a certain extended local category.
\end{nim}

\begin{nim} As we deal with Morita equivalences, we consider another finite group $G'$, and assume that $\omega:G\rightarrow \bar{G}$, $\omega':G'\rightarrow \bar{G}$ are group epimorphisms with $H=\Ker \omega$ and $H'=\Ker \omega'$. Let  $b'$  be a $G'$-invariant block of $\Od H'$   and let $A':=\Od G'b'$ be the associated $\bar{G}$-graded, $G'$-interior algebra with $1$-component $B':=A'_1=\Od H'b'$.
We will use notations similar to the above (and self-explanatory) for the objects associated with $b'$.

Set $\ddot{G}:=(\omega\times\omega')^{-1}(\Delta(\bar{G}))$  and $\ddot{\Delta}:=(b\otimes b')\Od\ddot{G}$.
\end{nim}

\begin{nim} Basic Morita equivalences have been introduced by Puig \cite{PuLo}. Recall that if the indecomposable $\Od(H\times H')$-module $M$, such that $bMb'=M$, induces a  Morita equivalence between $\Od Hb$ and $\Od H'b'$, then  by \cite[Theorem 6.9]{PuLo} there is a $\ddot{Q}$-interior algebra embedding $B_{\delta}\rightarrow T\otimes B'_{\delta '}$, where $T=\End_{\Od}(N)$ is a ${Q}$-interior algebra, with $\ddot{Q}\leq H\times H'$ a vertex of $M$, and  $N$ a source $\Od\ddot{Q}$-module of $M$. The equivalence is called {\it basic} if $T$ is a Dade $\ddot{Q}$-algebra (see \cite[Chapter 7]{PuLo}). In this case, the projections from $\ddot{G}$ to $G$ and $G'$ give an identification of the vertex $\ddot{Q}$ with the defect groups $Q$ and $Q'$.

The block $b$ is called \textit{inertial} (see \cite[2.16]{Pu2}) if it is basic Morita equivalent to the block  $\Od N_H(Q_{\delta})b_{\delta}$ (hence to its Brauer correspondent as well).
\end{nim}

\begin{nim} \label{subsec82}  Basic Morita equivalences have been generalized to block extensions in \cite{CoMa}. According to \cite[Definition 4.2]{CoMa}, $A$ is $\bar{G}$-{\it graded basic} Morita equivalent to $A'$ if and only if there is  an indecomposable $\Od[H\times H']$-module $M$ associated with $b\otimes b'$ that extends to $\ddot{\Delta}$ such that there is an  embedding $f:A_{\gamma}\rightarrow S\otimes A'_{\gamma'}$ of $\bar G$-graded $P$-interior algebras, where $S:=\End_{\Od}(\ddot{N})$ is a Dade $\ddot{P}$-algebra, with $\ddot{P}$ a vertex of $M$, and $\ddot{N}$ a source $\Od\ddot{P}$-module of $M$.

In this case, we can again identify
\begin{equation}\label{eq1PP'} P\simeq P'\simeq \ddot{P}
\end{equation}
In particular, it follows  that the embedding $f$ restricts to a $Q$-interior algebra embedding
\begin{equation}\label{eq2emb}  f:B_{\gamma}\rightarrow S\otimes B'_{\gamma'}
\end{equation}
Let $R\leq P$ and $R'\leq P'$ such that $R'$ corresponds to $R$ via (\ref{eq1PP'}). By applying the Brauer construction to (\ref{eq2emb}) we obtain the $N_P(R)$-algebra embedding
\begin{equation}\label{eq3emb}
\bar{f}: B_{\gamma}(R)\rightarrow S(R)\otimes B'_{\gamma'}(R')
\end{equation}
If $R_{\epsilon}$ is a local pointed subgroup of $P_{\gamma}$ then, by (\ref{eq3emb}), there is a unique local pointed group $R'_{\epsilon'}$  such that $R'_{\epsilon'}\leq P'_{\gamma'}$. This construction yields a bijection
\begin{equation}\label{eq9} R_{\epsilon}\leftrightarrow R'_{\epsilon'}
\end{equation}
between the local pointed groups included in $P_{\gamma}$ and local pointed groups included in $P'_{\gamma'}$.
\end{nim}

\begin{nim}  \label{ss:Clifford-ext} The following remarks hold, more generally, when $A$ is a strongly $\bar G$-graded $\mathcal{O}$-algebra with $1$-component $B:=A_1$, see \cite[\S 1]{D84} or \cite[Section 2.2]{Ma}.  In the sequel, we mostly apply this in the situation when $A$ is a crossed product of $B$ and $\bar G$, which means that the group $\mathrm{hU}(A)=\bigcup_{g \in\bar G}(A^\times\cap A_g)$ of homogeneous units of $A$ is a group extension of $B^\times$ by $\bar G$.

Let $V$ be an $A_1$-module. For $g\in \bar G$, the $A_1$-module $A_g\otimes_{B}V$ is called the $g$-conjugate of $V$, and $V$ is called $\bar G$-invariant, if $V\simeq A_g\otimes_{B}V$ as $B$-modules for all $g\in \bar G$. Consider the $\bar G$-graded $A$-module $U=A\otimes_{B}V$. Then the endomorphism algebra $A':=\End_A(U)^{\mathrm{op}}$ is a $\bar G$-graded algebra with  $g$-component  given by
\[A'_g\simeq \Hom_{A_1}(V,A_g\otimes_{B}V)\]
for any $g\in \bar G$. In this way, $U$ becomes a $\bar G$-graded $(A,A')$-bimodule.

Note that $A'$ is a crossed product of $B'\simeq\End_B(V)^{\mathrm{op}}$ and $\bar G$ if and only if $V$ is a $\bar G$-invariant $B$-module. Then the graded Jacobson radical $J_{\mathrm{gr}}(A')$ equals $J(B')A'=A'J(B')$, and $\bar{A}':=A'/J_{\mathrm{gr}}(A')$ is still a crossed product of $\bar B':=B'/J(B')$ and $\bar G$. In this case, the group extension $\mathrm{hU}(A')$ is called the {\it Clifford extension} of $V$, while $\mathrm{hU}(\bar{A}')$ is the {\it residual Clifford extension} of $V$.

If, in addition, $k$ is algebraically closed, then $\bar A'$ is a twisted group algebra of the form $k_\alpha\bar G$, for some 2-cocycle $\alpha\in Z^2(\bar G, k^\times)$.
\end{nim}

\begin{nim}  \label{ss:grade-refine} We will need in Section \ref{sec7} the following observation which allows to refine the grading of an algebra.

Let $A$ be a $\bar G$-graded crossed product with $1$-component $B$, and assume that $B$ is a $H$-graded crossed product with $1$-component $C$. Assume also that for any $x\in G$ there is an invertible element $a_x\in A_{\bar x}$ such that the following conditions hold.
\begin{enumerate}
\item[(1)] $a_h\in B_h$ for all $h\in H$.
\item[(2)] $a_xCa_x^{-1}=C$ for all $x\in G$.
\item[(3)] $a_xa_ya_{xy}^{-1}\in C^\times$ for all $x,y\in G$.
\end{enumerate}
Then the $\bar G$-grading of $A$ can be refined to a $G$-grading such that $A_h=B_h$ for all $h\in H$.

Indeed, letting $A_x=Ca_x=a_xC$ for all $x\in G$, it is easy to see that $A=\bigoplus_{x\in G}A_x$ is a $G$-graded crossed product.
\end{nim}

\section{The block $b_\delta$ and the extension $L$ of $Q$} \label{sec2}

\begin{nim} We keep the setting of Section 1. We consider the $N_G(Q_{\delta})$-invariant nilpotent block $b_{\delta}$ of $\mathcal{O}QC_H(Q)$. In this case $(Q,b_{\delta})$ remains a maximal $(b_{\delta},QC_H(Q))$-Brauer pair. We obtain as a particular case of the next proposition that $P$ is also a defect of  $b_{\delta}$ in $N_G(Q_{\delta})$. For this, note that associated with $P_{\gamma}$, there is a so-called generalized maximal $(G,H,b)$-Brauer pair denoted $(P,b_{\gamma})$ such that $(Q,b_{\delta})\leq(P,b_{\gamma})$. In particular, if $G/H$ is a $p'$-group, then $Q=P$ and $b_{\gamma}=b_{\delta}$. We refer to \cite{KeSt} for more details regarding generalized Brauer pairs.
\end{nim}

The next proposition is a generalization of \cite[Proposition 3.1]{To}, but we will use it here only in the case $R=Q$ and $b_{\epsilon}=b_{\delta}$.

\begin{proposition} \label{propPdefectbdelta} Let $R_{\epsilon}\leq P_{\gamma}$ be local pointed groups on $\mathcal{O}H$, where $R$ is a normal subgroup of $P$ such that $R\leq H$ and $(R,b_{\epsilon})\leq (P,b_{\gamma})$. Assume that $P\le N_G(R_\epsilon)$. Then the pair $(P,b_{\gamma})$ is a maximal $(N_G(R_{\epsilon}), N_H(R_{\epsilon}), b_{\epsilon})$-Brauer pair.
\end{proposition}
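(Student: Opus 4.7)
The plan is to (i) identify $(P, b_\gamma)$ as a generalized Brauer pair for the block extension $\mathcal{O}M \hat b_\epsilon$, where $M = N_G(R_\epsilon)$, $N = N_H(R_\epsilon)$, and $\hat b_\epsilon$ is the unique $M$-stable block of $\mathcal{O}N$ lying above $b_\epsilon$, and then (ii) conclude that $P$ is a defect group of this block extension by invoking the Brauer first main theorem for block extensions of \cite{KeSt}.

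For the setup, I would first check that $\hat b_\epsilon$ exists and that $\mathcal{O}M \hat b_\epsilon$ is a block extension in the sense of Section \ref{sec-prelim}. Since $N = N_H(R, b_\epsilon)$ is the stabilizer in $N_H(R)$ of $b_\epsilon \in Z(\mathcal{O}C_H(R))$, a Fong--Reynolds type argument produces a unique block $\hat b_\epsilon$ of $\mathcal{O}N$ covering $b_\epsilon$, and it is automatically $M$-stable because $M$ by definition stabilizes $R_\epsilon$, hence $b_\epsilon$. Next, to verify that $(P, b_\gamma)$ is a $(M, N, \hat b_\epsilon)$-Brauer pair, I would observe that $C_H(P) \leq C_H(R) \leq N$: the first inclusion holds because $R \leq P$; the second because every element of $C_H(R)$ fixes the central idempotent $b_\epsilon$ under conjugation, hence normalizes $R_\epsilon$. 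Thus $C_N(P) = C_H(P)$ and $b_\gamma$ is still a block of $\mathcal{O}C_N(P)$; the containment $(R, b_\epsilon) \leq (P, b_\gamma)$ inside $(G, H, b)$ then restricts to $(R, \hat b_\epsilon) \leq (P, b_\gamma)$ inside $(M, N, \hat b_\epsilon)$, by the compatibility of the Brauer maps $\Br_R$ and $\Br_P$ with the inclusion $\mathcal{O}N \hookrightarrow \mathcal{O}H$.

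The key remaining step is to prove maximality, i.e.\ that $P$ is a defect group of $\mathcal{O}M \hat b_\epsilon$. For this I would appeal to the generalized first main theorem of \cite{KeSt}: Brauer correspondence yields a bijection between $(G, H, b)$-Brauer pairs containing $(R, b_\epsilon)$ and $(M, N, \hat b_\epsilon)$-Brauer pairs, and under this bijection the defect pointed groups of $G_{\{b\}}$ above $R_\epsilon$ correspond precisely to the defect pointed groups of $M_{\{\hat b_\epsilon\}}$. Since $P_\gamma$ is a defect pointed group of $G_{\{b\}}$ by hypothesis and $(R, b_\epsilon) \leq (P, b_\gamma)$, the pair $(P, b_\gamma)$ is then a maximal $(M, N, \hat b_\epsilon)$-Brauer pair, as required. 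The main obstacle is exactly this last correspondence: the classical first main theorem handles only $H = G$, so one genuinely needs the block-extension version, which requires careful tracking of how the grading $G/H$ sits above $M/N \simeq G/H$ and of the fact that $P$ need not lie in $H$. The analogous statement in the case $R = Q$ appears as \cite[Proposition 3.1]{To}, and provides the template for transferring defects through the Brauer correspondence.
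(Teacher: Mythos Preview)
Your overall strategy --- reduce to a first-main-theorem type correspondence between generalized Brauer pairs for $(G,H,b)$ and for $(M,N,\hat b_\epsilon)$ --- is natural, but the key step is not justified. You write that ``Brauer correspondence yields a bijection between $(G,H,b)$-Brauer pairs containing $(R,b_\epsilon)$ and $(M,N,\hat b_\epsilon)$-Brauer pairs, and under this bijection the defect pointed groups of $G_{\{b\}}$ above $R_\epsilon$ correspond precisely to the defect pointed groups of $M_{\{\hat b_\epsilon\}}$,'' and you attribute this to \cite{KeSt}. But \cite{KeSt} is a Fong--Reynolds reduction for fusion systems; it introduces generalized Brauer pairs, but it does not contain a first main theorem of the shape you invoke. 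More seriously, the assertion that maximality is preserved under your correspondence is essentially the content of the proposition itself, so the argument as written is circular. You acknowledge this yourself (``the main obstacle is exactly this last correspondence''), but you do not resolve it; pointing to \cite[Proposition~3.1]{To} as a template is not a proof, since that result is established by a direct computation, not by a correspondence theorem.

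The paper's proof avoids this entirely by working hands-on with Brauer maps and idempotent lifting. Setting $X = N_G(P)\cap N_G(R_\epsilon)$, one shows directly that
\[
\Br_P : (kC_H(R))^{N_G(R_\epsilon)}_P \longrightarrow (kC_H(P))^X
\]
is surjective via a relative trace computation (Mackey decomposition with respect to $P$ and $X$). One then picks a primitive idempotent $b_X$ in $(kC_H(P))^X$ lying over $b_\gamma$, lifts it through $\Br_P$ to a primitive idempotent $\bar b_\epsilon$ with defect $P$ in $(kC_H(R))^{N_G(R_\epsilon)}$, and finally checks $\bar b_\epsilon b_\epsilon \neq 0$, forcing $\bar b_\epsilon = b_\epsilon$ by primitivity. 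This establishes that $b_\epsilon$ has defect $P$ directly, without assuming any external correspondence. If you want to salvage your approach, you would need to prove the preservation-of-defect statement independently --- and the most efficient way to do that is precisely the computation the paper carries out.
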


\begin{proof}
Let  $X=N_G(P)\cap N_G(R_{\epsilon}).$ The following restriction of the Brauer homomorphism
\[\Br_P: (kC_H(R))^{N_G(R_{\epsilon})}_P\to (kC_H(P))^X\] is an epimorphism of $N_G(P)$-algebras. Indeed, we have
\begin{align*}\Br_P\left(\Tr_P^{N_G(R_{\epsilon})}(a)\right)&=\Br_P\left(\sum_{x\in [X\setminus N_G(R_{\epsilon})/P]} \Tr_{X\cap P^x}^{X}(a^x)\right)\\
       &=\sum_{x\in [X/P]}\Br_P(a^x)=\Tr_P^X(\Br_P(a)),
\end{align*}
where $a\in (kC_H(R)^P$. Note that $x\in X$ if and only if $X\cap P^x=P,$ and then, if $x\notin X$, we get
\[\Tr_{X\cap P^x}^X(a^x)=\sum_{y\in [P\setminus X/(X\cap P^x)]}\Tr_{P\cap (X\cap P^x)^y}^{P}(a^{xy}),\] where $P\cap (X\cap P^x)^y$ is always a proper subgroup of $P.$

By using the inclusion
\begin{align*} (kC_H(P))^X_P=(kC_H(P))^X &\subseteq (kC_H(P))_P^{N_G(P_{\gamma})}= (kC_H(P))^{N_G(P_{\gamma})}
\end{align*}
and the fact that $b_{\gamma}$ is a primitive idempotent of  the algebra $(kC_H(P))^{N_G(P_{\gamma})}$, we can find a primitive idempotent $b_X\in (kC_H(P))^X$ that verifies
\[b_Xb_{\gamma}=b_{\gamma}b_X=b_{\gamma}.\]
By lifting it, we obtain the equality $\Br_P(\bar{b}_{\epsilon})=b_X,$ for a primitive idempotent $\bar{b}_{\epsilon}$ with defect group $P$ in the algebra $(kC_H(R))^{N_G(R_{\epsilon})}_P$.

Further, we have
\[b_{\gamma}=b_{\gamma}\Br_P(b_{\epsilon})=b_Xb_{\gamma}\Br_P(b_{\epsilon})=b_{\gamma}\Br_{P}(\bar{b}_{\delta})\Br_P(b_{\epsilon}),\]
hence $\bar{b}_{\epsilon}b_{\epsilon}\neq 0.$ This forces
\[\bar{b}_{\epsilon}b_{\epsilon}= \bar{b}_{\epsilon}=b_{\epsilon},\] since both idempotents are primitive in
$(kC_H(R))^{N_G(R_{\epsilon})}.$
\end{proof}

\begin{nim} By \cite[Proposition 5.3]{KuPu} we have that $PC_H(Q)/C_H(Q)\simeq P/Z(Q)$ is a Sylow $p$-subgroup of $E=N_G(Q_\delta)/C_H(Q)$. For now, we only need to show the existence of  an extension $L$ of $Q$ by $\tilde E =N_G(Q_\delta)/QC_H(Q)$ containing $P$ as a Sylow $p$-subgroup. It turns out that when $b$ is nilpotent, $L$ controls the fusions in the block extension $b\mathcal{O}G$ (\cite[Theorem 1.8]{KuPu}), but this will be obtained as a consequence of a more general result in Section \ref{s:fusions}. The group $L$ exists without any assumption on $b$. For convenience, we include the proof, which follows the first part of the proof of \cite[Theorem 3.5]{PuZhIII}.
\end{nim}

\begin{proposition} With the above notations there is a group extension
\[\xymatrix{1\ar[r]& Q\ar[r]^{\tau}&  L  \ar^{\bar\pi}[r] &  \tilde{E} \ar[r]&1}\]
and an injective group homomorphism $\tau:P\to L$ such that $\tau(P)$ a Sylow $p$-subgroup in $L$, $\ker\bar \pi=\tau(Q)$
and $(\pi\circ\tau)(u)=\bar\pi(u)$ for any $u\in P$.
\end{proposition}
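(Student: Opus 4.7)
The plan is to construct $L$ as a subgroup of the group of homogeneous units in the source algebra $\hat A$ of the block extension $\mathcal{O}N_G(Q_\delta)b_\delta$.

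First, I would apply Proposition \ref{propPdefectbdelta} with $R = Q$ and $R_\epsilon = Q_\delta$: the hypotheses $Q \le H$, $Q \trianglelefteq P$ (since $Q \le P$ and $P \le N_G(Q_\delta) \le N_G(Q)$), $(Q, b_\delta) \le (P, b_\gamma)$, and $P \le N_G(Q_\delta)$ (from $Q_\delta \le P_\gamma$) all hold. The proposition yields that $P$ is a defect group of $b_\delta$ viewed as a block on $N_G(Q_\delta)$. Choose a source idempotent $i$ for $P_\gamma$ in this block extension satisfying $j = ij = ji$, and set $\hat A := i\,\mathcal{O}N_G(Q_\delta)b_\delta\,i$. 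Since $QC_H(Q) \trianglelefteq N_G(Q_\delta)$ with quotient $\tilde E$ and $b_\delta$ is $N_G(Q_\delta)$-invariant, $\hat A$ is an $\tilde E$-graded crossed product with $1$-component $\hat A_1 = j\,\mathcal{O}QC_H(Q)b_\delta\,j$. By Puig's theorem on nilpotent blocks applied to the nilpotent block $b_\delta$ of $QC_H(Q)$, we have $\hat A_1 \cong \mathcal{O}Q$ as $Q$-interior algebras, and I would identify them so that the structural image of $Q$ in $\hat A_1^\times$ coincides with the group basis.

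Next, for each $\bar x \in \tilde E$ the crossed product property provides an invertible $s \in \hat A_{\bar x}^\times$, and conjugation by $s$ induces an $\mathcal{O}$-algebra automorphism of $\hat A_1 \cong \mathcal{O}Q$. Since the outer action of $\bar x$ on $\hat A_1$ is induced by $G$-conjugation by any lift $x \in N_G(Q_\delta) \le N_G(Q)$, which preserves the subgroup $Q$, I can modify $s$ by an element of $\hat A_1^\times$ to obtain $s_{\bar x} \in \hat A_{\bar x}^\times$ whose conjugation restricts to a group automorphism of $Q$; moreover $s_{\bar x}$ is unique up to multiplication by an element of $Q$. I then define
\[
L := \langle\, Q,\; s_{\bar x} : \bar x \in \tilde E \,\rangle \le \mathrm{hU}(\hat A),
\]
and the $\tilde E$-grading restricts to a surjection $\bar\pi : L \to \tilde E$ with $\ker \bar\pi = L \cap \hat A_1 = Q$, producing the desired short exact sequence $1 \to Q \to L \to \tilde E \to 1$.

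Finally, the $P$-interior structural map $\tau : P \hookrightarrow \hat A^\times$ (injective because $P_\gamma$ is a defect pointed group) factors through $L$: each $\tau(u)$ is homogeneous of degree the class of $u$ in $\tilde E$, and conjugation by $\tau(u)$ acts on $Q \subset \hat A_1$ as $G$-conjugation by $u$, which is a group automorphism of $Q$. The compatibility $\bar\pi \circ \tau = \pi|_P$ is immediate from the grading. Since $|L| = |Q|\cdot|\tilde E|$ and $[P:Q]$ is the $p$-part of $|\tilde E|$ by \cite[Proposition 5.3]{KuPu}, $\tau(P)$ is a Sylow $p$-subgroup of $L$. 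The main obstacle is the existence of the $s_{\bar x}$ normalizing $Q$ inside $\mathcal{O}Q$: this hinges on the compatibility between the Puig nilpotent block isomorphism $\hat A_1 \cong \mathcal{O}Q$ and the $N_G(Q_\delta)$-conjugation action on the block algebra, and is what distinguishes $L$ from the larger extension of $(\mathcal{O}Q)^\times$ by $\tilde E$ realized by all of $\mathrm{hU}(\hat A)$.
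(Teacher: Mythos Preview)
Your approach is genuinely different from the paper's. The paper constructs $L$ cohomologically: it takes the class $\bar h\in H^2(PC_H(Q)/C_H(Q),Z(Q))$ of the extension $1\to Z(Q)\to P\to P/Z(Q)\to 1$, and uses the Cartan--Eilenberg stable element criterion to show that $\bar h$ lies in the image of restriction from $H^2(E,Z(Q))$. The verification of stability is where Proposition~\ref{propPdefectbdelta} enters: knowing that $b_\delta$ has defect $P$, $P^x$, $R$ in the appropriate overgroups forces $R\le P^{zx}$ for some $z\in C_H(Q)$, which is exactly the compatibility of subextensions needed. A preimage of $\bar h$ in $H^2(E,Z(Q))$ then gives $L$, and the map $\tau:P\to L$ is the inclusion of the subextension.

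Your idea of realizing $L$ inside $\mathrm{hU}(\hat A)$ for a suitable source algebra is natural and is morally what Corollary~\ref{c:cornormaldef} records \emph{after} $L$ has been built, but as an independent construction of $L$ it has a real gap. First a small point: with $\hat A=i\,\mathcal{O}N_G(Q_\delta)\,i$ the $1$-component is $i\,\mathcal{O}QC_H(Q)\,i$, not $j\,\mathcal{O}QC_H(Q)\,j$; the former is $S_{\gamma'}\otimes\mathcal{O}Q$, not $\mathcal{O}Q$ (this is the description in~\S\ref{ssec4}). If instead you take $\hat A=j\,\mathcal{O}N_G(Q_\delta)\,j$ so that $\hat A_1\cong\mathcal{O}Q$, then $P$ no longer acts on $j$, so the structural map $P\to\hat A^\times$ is not available without further argument.

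The more serious issue is the claim that $\ker\bar\pi=L\cap\hat A_1^\times$ equals $Q$. Your $s_{\bar x}$ are only well defined up to $(\hat A_1)^\times$, so the products $s_{\bar x}s_{\bar y}s_{\bar{xy}}^{-1}$ lie in $(\mathcal{O}Q)^\times$ with no a priori reason to land in $Q$; the subgroup $L=\langle Q,s_{\bar x}\rangle$ you generate is therefore an extension of some subgroup of $(\mathcal{O}Q)^\times$ containing $Q$, not of $Q$ itself. (Normalizing $Q$ does not help: conjugation by any unit of $\mathcal{O}Q$ preserves the finite $p$-torsion augmentation-one units, so the condition ``$s_{\bar x}$ normalizes $Q$'' is not the distinguishing one you need.) Cutting $\mathrm{hU}(\hat A)$ down from an extension of $(\mathcal{O}Q)^\times$ to one of $Q$ is precisely the content of the proposition, and it is this step---which you flag as ``the main obstacle'' but leave unresolved---that the paper's cohomological argument supplies.
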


\begin{proof} Consider the element $\bar h\in H^2(PC_H(Q)/C_H(Q), Z(Q))$ corresponding to the extension
\[1\to Z(Q) \to P \to P/Z(Q) \to 1.\]
We show that $\bar h$ is in the image of the restriction map $H^2(N_G(Q_\delta)/C_H(Q),Z(Q)) \to H^2(PC_H(Q)/C_H(Q),Z(Q)).$ By \cite[Chapter XII, Theorem 10.1]{CE}, it is enough to prove that for any subgroup $R$ of $P$ containing $Q$, and any  $\bar x\in N_G(Q_\delta)/C_H(Q)$ such that $RC_H(Q)\le P^xC_H(Q)$, the restriction to $RC_H(Q)/C_H(Q)$ of the class from $H^2(P^xC_H(Q)/C_H(Q),Z(Q))$ determined by $P^x$ coincides with the class determined by $R$.

Indeed, by Proposition \ref{propPdefectbdelta} applied for $Q$, we get that $b_\delta$ is a block idempotent of $\mathcal{O}PC_H(Q)$, of $\mathcal{O}P^xC_H(Q)$ and of $\mathcal{O}RC_H(Q)$ having defect groups $P$, $P^x$ and $R$, respectively. Since $RC_H(Q)\le P^xC_H(Q)$, $R$ must be contained in a conjugate of $P^x$, so there is $z\in C_H(Q)$ such that $R\le P^{zx}$. This implies that the extension
\[1\to Z(Q) \to R \to R/Z(Q) \to 1\]
is a subextension of
\[1\to Z(Q) \to P^x \to P^x/Z(Q) \to 1,\]
and the claim follows.

Therefore, we obtain a group extension
\[\xymatrix{1\ar[r]& Z(Q)\ar[r]^{\tau} & L\ar[r]^{\pi} & E \ar[r]&1}\]
corresponding to the element in $H^2(N_G(Q_\delta)/C_H(Q),Z(Q))$ whose image by restriction in $H^2(PC_H(Q)/C_H(Q),Z(Q))$ is $\bar h$, and also the injective group extension map $\tau:P\to L$.
\end{proof}

\begin{remark} \label{r:conjQ} In the situation of the above proposition, it is clear by the construction of the group $L$ that for any $x\in N_G(Q_\delta)$ there is $y\in L$ such that \[y\tau(u)y^{-1} = \tau(xux^{-1})\] for all $u\in Q$.
\end{remark}

\begin{nim} \label{n:Lpprime} We consider here the case when $\bar G$ is a $p'$-group, which is the assumption in Subsection \ref{ssec4} below. It is known that when $k$ is algebraically closed, $\tilde E_H(Q_{\delta}):=N_H(Q_{\delta})/QC_H(Q)$ is a $p'$-group, and therefore $\tilde E$ is still a $p'$-group, since $\tilde E/\tilde E_H(Q_{\delta})\simeq G/H.$ We have the  commutative diagram
\begin{equation*}\label{eqtheta}
\qquad\xymatrix{ 1\ar[r] & Q/Z(Q)\ar[r]\ar[d] & N_G(Q_{\delta})/C_H(Q)\ar[r]\ar[d] & \tilde{E} \ar[d]\ar@{.>}[dl]^{\theta} \ar[r]&1\\
                                            1\ar[r]&\mathrm{Int} Q\ar[r]&\Aut Q\ar[r]
                                            &\Out Q\ar[r]&1. }
\end{equation*}
Since $\tilde{E}$ is a $p'$-group we obtain an action of $\tilde{E}$ on $\Od Q$. In this case, we have that $L\simeq Q\rtimes_{\theta}\tilde{E}$. Note also that we have the group isomorphism $\tilde{E}\simeq L/Q.$
\end{nim}

\begin{nim} \label{d:cocycle-beta} Assume that $k$ is algebraically closed. Let $\bar V$ be the unique simple $\mathcal{O}QC_H(Q)b_\delta$-module. The Clifford extension of $\bar V$ (see  \ref{ss:Clifford-ext}) gives a $2$-cocycle $\alpha \in Z^2(\tilde{E}, k^\times)$. Since the group extension
\[1\to 1+J(\mathcal{O})\to \mathcal{O}^\times\to k^\times \to 1\]
splits uniquely, we obtain a $2$-cocycle in $Z^2(\tilde{E}, \mathcal{O}^\times)$. From the isomorphism $L/Q\simeq \tilde{E}$, we obtain by inflation a $2$-cocycle, also denoted by $\alpha$, in $Z^2(L, \mathcal{O}^\times)$. Using this, we construct the twisted group algebra $\mathcal{O}_\alpha L$, which will be regarded as an $\tilde{E}$-graded algebra with $1$-component $\mathcal{O}Q$, or even as an $E$-graded algebra with $1$-component $\mathcal{O}Z(Q)$. The restrictions of $\alpha$ to subgroups of $L$ will be denoted by the same $\alpha$.
\end{nim}

\section{The Fong-Reynolds correspondence}\label{sec4}

\begin{nim} It is clear that  $b_{\delta}$ remains a  primitive idempotent in $(\Od N_H(Q_{\delta}))^{N_G(Q_{\delta})}\ $, and the induced block $b':=b_{\delta}^{N_H(Q)}$ is the Brauer correspondent of $b$.
 Since $b$ is a $G$-invariant block of $\mathcal{O}H$ with defect $Q$, it is well known that $b'$  is an $N_G(Q)$-invariant block of $\Od N_H(Q)$ with defect group $Q$ in $N_H(Q)$. In this section, set
\[G':=N_G(Q), \qquad H':=N_H(Q), \qquad A':=\Od N_G(Q)b', \qquad B':=\Od N_H(Q)b'.\]
Since  \[G/H\simeq N_G(Q_{\delta})/N_H(Q_{\delta})= G'/H',\] it follows that $A'$  is also a $\bar G$-graded algebra, with identity component $B'$.
\end{nim}

The Fong-Reynolds theorem says that there is a Morita equivalence between the block algebras $B=\mathcal{O}N_H(Q_{\delta})b_{\delta}$ and $B'=\mathcal{O}N_H(Q)b'$. This equivalence is actually basic. Although the argument is well known, for completeness, we give an explicit proof of  the fact that Fong-Reynolds equivalence extends to group-graded Morita equivalences.

We start with a useful lemma, whose proof is actually hidden in the proof of \cite[Proposition 3.2]{To}, see \cite[Remarks 3.3, 3.4]{To}.

\begin{lemma}\label{propblocskdefect} With the above notations, the  Brauer correspondent $b'$ of $b$ in $\mathcal{O}N_H(Q)$ is a primitive idempotent of $(\mathcal{O}N_H(Q))^{N_G(Q)}$, and  moreover, $b'=\Tr_{N_G(Q_{\delta})}^{N_G(Q)}(b_{\delta})$.

\end{lemma}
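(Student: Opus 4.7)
The plan is to establish the two claims separately. The primitivity assertion will follow from a general subalgebra argument, while the trace formula reduces, via a Frattini-type decomposition of $N_G(Q)$, to the standard description of the Brauer correspondent.

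First I would prove the intermediate identity $N_G(Q) = N_H(Q) \cdot N_G(Q_\delta)$. Starting from the Frattini identity $G = H \cdot N_G(Q_\delta)$ already recorded in \ref{n:source}, pick $x \in N_G(Q)$ and write $x = hn$ with $h \in H$ and $n \in N_G(Q_\delta)$. Since $N_G(Q_\delta) \le N_G(Q)$, the element $n$ normalizes $Q$, so $Q = Q^x = Q^{hn}$ forces $Q^h = Q$, i.e.\ $h \in N_H(Q)$. This decomposition identifies the $N_G(Q)$-orbit of $b_\delta$ on primitive idempotents of $Z(\mathcal{O}C_H(Q))$ with its $N_H(Q)$-orbit. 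Noting that $N_G(Q_\delta)$ and $N_H(Q_\delta) = N_H(Q) \cap N_G(Q_\delta)$ are the respective stabilizers of $b_\delta$, the two traces
\[\Tr_{N_G(Q_\delta)}^{N_G(Q)}(b_\delta)\quad \text{and} \quad \Tr_{N_H(Q_\delta)}^{N_H(Q)}(b_\delta)\]
are both equal to the sum over this common orbit. Since the second of these is precisely $b_\delta^{N_H(Q)} = b'$ by Brauer's first main theorem, the formula follows.

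For primitivity, the inclusion $N_H(Q)\le N_G(Q)$ yields
\[(\mathcal{O}N_H(Q))^{N_G(Q)} \subseteq (\mathcal{O}N_H(Q))^{N_H(Q)} = Z(\mathcal{O}N_H(Q)).\]
The idempotent $b'$ is primitive in $Z(\mathcal{O}N_H(Q))$ because it is a block of $\mathcal{O}N_H(Q)$; and since $b$ is $G$-invariant, its Brauer correspondent $b'$ is $N_G(Q)$-invariant (the $N_G(Q)$-conjugates of $b'$ are Brauer correspondents of $b^x=b$ with respect to $Q$, hence all equal to $b'$ by uniqueness). Thus $b'$ lies in the subalgebra $(\mathcal{O}N_H(Q))^{N_G(Q)}$, and an idempotent that is primitive in an overalgebra remains primitive in any subalgebra to which it belongs.

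The only substantive ingredient is the Frattini-style decomposition $N_G(Q) = N_H(Q)\cdot N_G(Q_\delta)$; once the orbit identification is in hand, the rest is bookkeeping, and, notably, no appeal to the source-idempotent structure of $\delta$ is required.
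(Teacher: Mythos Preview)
Your proof is correct and takes a genuinely different, more elementary route than the paper's.

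The paper's argument works inside $(\mathcal{O}C_H(Q))^{N_G(Q)}$: it invokes \cite[Lemma~2.1]{To} for primitivity, then \cite[Part~IV, Proposition~3.10]{AKO} to write the relevant idempotent as $\Tr_{N_G(Q,b_1)}^{N_G(Q)}(b_1)$ for \emph{some} block $b_1$ of $\mathcal{O}C_H(Q)$, and finally runs a separate computation with $\Br_Q$ to identify $b_1$ as an $N_G(Q)$-conjugate of $b_\delta$. Your approach bypasses all of this: the Frattini-type factorisation $N_G(Q)=N_H(Q)\cdot N_G(Q_\delta)$ immediately identifies the $N_G(Q)$-orbit of $b_\delta$ with its $N_H(Q)$-orbit, so the two relative traces coincide term by term; and the equality $\Tr_{N_H(Q_\delta)}^{N_H(Q)}(b_\delta)=b'$ is then just the standard description of a block with normal defect group as the orbit sum of its maximal-Brauer-pair block. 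Your primitivity argument (invariance of the Brauer correspondent plus ``primitive in an overalgebra implies primitive in a subalgebra'') is also cleaner than the cited reference.

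One small point of attribution: the identity $\Tr_{N_H(Q_\delta)}^{N_H(Q)}(b_\delta)=b'$ is not literally Brauer's first main theorem but rather the well-known refinement for blocks with normal defect group (the block idempotent lies in $\mathcal{O}C_H(Q)$ and equals the orbit sum of the associated block of $C_H(Q)$; see for instance \cite[\S48]{The} or \cite[Part~IV, Proposition~3.10]{AKO}). Since the paper itself records $b':=b_\delta^{N_H(Q)}$ in the set-up, this is entirely legitimate to cite, but you may want to sharpen the reference.
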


\begin{proof} Since  $b'$ is the induced block of $b_{\delta}$ from $N_H(Q_{\delta})$ to $N_H(Q)$, we have that
\begin{equation} \label{eq1'}
\Br_Q(c)b_{\delta}=b_{\delta}.
\end{equation}
By \cite[Lemma 2.1]{To} it is clear that $c$ is a primitive idempotent of $(\mathcal{O}C_H(Q))^{N_G(Q)}$ and then from \cite[Proposition 3.10, Part IV]{AKO} we deduce that $c=\Tr_{N_G(Q,b_1)}^{N_G(Q)}(b_1)$, where $b_1$ is a block of $\mathcal{O}C_H(Q)$.
It is enough to show that
\begin{equation*}\label{eq3c'}
b_{\delta}={}^gb_1
\end{equation*}
for some $g\in G$. By using (\ref{eq1'}), we deduce the equalities
\[\Br_{Q}(c)b_{\delta}=\sum_{g\in[N_G(Q)/N_G(Q,b_1)]}\Br_{Q}({}^gb_1)b_{\delta}=b_{\delta}.\]
Since for all elements $g\in[N_G(Q)/N_G(Q,b_1)]$ the idempotents ${}^gb_1$ are blocks of $\mathcal{O}C_H(Q)$, it follows that
\[b_{\delta}=\sum_{g\in[N_G(Q)/N_G(Q,b_1)]}{}^gb_1b_{\delta},\]
hence there is a unique $g\in[N_G(Q)/N_G(Q,b_1)]$ such that $b_{\delta}={}^gb_1.$
\end{proof}

The main result of this section is the following refinement of the Fong-Reynolds correspondence.

\begin{proposition}\label{propFong} With the above notations,  there is a $\bar G$-graded basic Morita equivalence between $\mathcal{O}N_G(Q_{\delta})b_{\delta}$ and $\mathcal{O}N_G(Q)b'$;
\end{proposition}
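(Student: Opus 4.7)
My strategy is to realise the Fong--Reynolds bimodule as the identity component of a natural $\bar G$-graded bimodule and to invoke that Fong--Reynolds is basic with trivial source. Set $M := \mathcal{O}N_G(Q)\,b_\delta$. Because $N_G(Q_\delta)$ stabilises $b_\delta$, the idempotent $b_\delta$ commutes with every element of $N_G(Q_\delta)$ inside $\mathcal{O}N_G(Q)$, so right multiplication gives $M$ a right $\mathcal{O}N_G(Q_\delta)b_\delta$-module structure. By Lemma~\ref{propblocskdefect}, $b' = \Tr_{N_G(Q_\delta)}^{N_G(Q)}(b_\delta)$; the summands ${}^{g}b_\delta$ with $g \notin N_G(Q_\delta)$ are pairwise distinct blocks of $\mathcal{O}C_H(Q)$ and hence annihilate $b_\delta$, so $b'b_\delta = b_\delta$ and $M$ is an $(\mathcal{O}N_G(Q)b',\,\mathcal{O}N_G(Q_\delta)b_\delta)$-bimodule. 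The $\bar G$-grading of $\mathcal{O}N_G(Q)$ descends to $M_{\bar x} := \mathcal{O}(N_G(Q)_{\bar x})\,b_\delta$, and is compatible with both module actions thanks to the identification $\bar G \simeq N_G(Q_\delta)/N_H(Q_\delta) \simeq N_G(Q)/N_H(Q)$ recorded at the start of the section.

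Next, the identity component $M_1 = \mathcal{O}N_H(Q)\,b_\delta$ is the classical Fong--Reynolds bimodule between $\mathcal{O}N_H(Q)b'$ and $\mathcal{O}N_H(Q_\delta)b_\delta$. It is well known that this Morita equivalence is \emph{basic}: viewed as an $\mathcal{O}[N_H(Q_\delta)\times N_H(Q)]$-module associated with $b_\delta \otimes b'$, it has vertex the diagonal $\Delta Q$ and trivial source $\mathcal{O}$, so the associated endomorphism algebra $S = \End_{\mathcal{O}}(\mathcal{O}) = \mathcal{O}$ is trivially a Dade $Q$-algebra.

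To promote this to a $\bar G$-graded basic Morita equivalence in the sense of Subsection~\ref{subsec82}, I view $M$ as a module over $(b_\delta\otimes b')\,\mathcal{O}\ddot\Delta$, where
\[
\ddot\Delta := \{(x_1,x_2) \in N_G(Q_\delta)\times N_G(Q) \mid \bar x_1 = \bar x_2\};
\]
the extension is well defined because $b_\delta$ centralises $N_G(Q_\delta)$. The vertex lifts to $\Delta P \le \ddot\Delta$ (since $P \le N_G(Q_\delta)$ and the two quotients agree on $P$), the source remains $\mathcal{O}$, and the Dade $P$-algebra is $S = \mathcal{O}$. A source idempotent $j \in \delta$ is simultaneously primitive in $(\mathcal{O}N_G(Q_\delta)b_\delta)^P$ and in $(\mathcal{O}N_G(Q)b')^P$ (by Fong--Reynolds applied at the $P$-fixed level), so the inclusion $\mathcal{O}N_G(Q_\delta)\subseteq \mathcal{O}N_G(Q)$ induces the required embedding of $\bar G$-graded $P$-interior algebras
\[
f : j\,\mathcal{O}N_G(Q_\delta)b_\delta\,j\ \hookrightarrow\ \mathcal{O}\otimes j\,\mathcal{O}N_G(Q)b'\,j,
\]
completing the verification of the definition in Subsection~\ref{subsec82}.

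The only real bookkeeping is to check that a single $j \in \delta$ simultaneously represents $\gamma$ and $\gamma'$, and that the two $\bar G$-gradings on the source algebras coincide; this is forced by $b'b_\delta = b_\delta$ and the shared $\bar G$-quotient, after which everything is tautological because the Dade algebra is trivial.
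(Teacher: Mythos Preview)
Your approach is essentially the paper's: both take $M=\mathcal{O}N_G(Q)b_\delta$ as the $\bar G$-graded bimodule, use Lemma~\ref{propblocskdefect} to get $b'b_\delta=b_\delta$, identify $M_1=\mathcal{O}N_H(Q)b_\delta$ as the classical Fong--Reynolds bimodule, and then check that the extended module over the diagonal has vertex $\Delta P$ and trivial source. The paper invokes \cite[Lemma~2.3.16]{Ma} for the graded Morita equivalence and gives an explicit minimality argument for the vertex (projection onto $P$ plus relative $\Delta P$-projectivity), whereas you assert both facts without proof; these are the points where your write-up is thinner than the paper's but not materially different.

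There is one genuine slip. In your final paragraph you take ``a source idempotent $j\in\delta$'' and claim it is primitive in $(\mathcal{O}N_G(Q_\delta)b_\delta)^P$ and in $(\mathcal{O}N_G(Q)b')^P$. But $\delta$ is a local point of $Q$ on $\mathcal{O}H$, so $j$ lives in $(\mathcal{O}H)^Q$ and is not in general $P$-fixed; it cannot serve as a source idempotent for the $P$-interior source algebras $A'_{\gamma'}$ on either side. The embedding $f$ required by Definition~\ref{subsec82} must be between $i'\mathcal{O}N_G(Q_\delta)i'$ and $i''\mathcal{O}N_G(Q)i''$ for primitive idempotents $i',i''$ fixed by $P$. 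The paper avoids this issue entirely: once the vertex of the extended module is shown to be $\Delta P$ with trivial source, the embedding of $\bar G$-graded $P$-interior source algebras follows from the general machinery of \cite{CoMa} (this is what underlies the cited Definition~4.2), so there is no need to name the idempotent explicitly. If you want to exhibit $f$ directly, you need a separate argument that a $P$-source idempotent for $b_\delta$ in $N_G(Q_\delta)$ remains primitive local for $b'$ in $N_G(Q)$---which is true here because $b_\delta\mathcal{O}N_G(Q)b_\delta=\mathcal{O}N_G(Q_\delta)b_\delta$, but is not the tautology you present.
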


\begin{proof} First notice that by Lemma \ref{propblocskdefect}  we have $b'=\Tr_{N_G(Q_{\delta})}^{N_G(Q)}(b_{\delta}).$  Next, by \cite[Lemma 2.3.16]{Ma} we know that there is a $\bar G$-graded Morita equivalence between $b_{\delta}\Od N_G(Q)b_{\delta}$ and $\Od N_G(Q)b_{\delta}\Od N_G(Q)$ induced by $\Od N_G(Q)b_{\delta}$ and its $\mathcal{O}$-dual $b_{\delta}\Od N_G(Q)$, viewed as $\bar G$-graded bimodules.

The identity component $\Od N_H(Q)b_{\delta}$ of $\Od N_G(Q)b_{\delta}$ determines a Morita equivalence between $\Od N_H(Q)b'$ and $\Od N_H(Q_{\delta})b_{\delta}$. Moreover, we have that $\Od N_H(Q)b_{\delta}$ is an indecomposable $\Od(N_H(Q)\times N_H(Q_{\delta}))$-module that extends to the diagonal subalgebra, and we claim that this extension has vertex $\Delta(P\times P)$. Indeed, if $\ddot{P}\leq G\times G'$ denotes such a vertex of $\Od N_H(Q)b_{\delta}$, we know that the projection $G\times G'\to G$ restricts to the epimorphism $\ddot{P}\to P,$ since $P$ is a defect group in $N_G(Q_{\delta})$ of $b_{\delta}.$ Since $b_{\delta}$ is projective relative to $P$ it follows that $\Od N_H(Q)b_{\delta}$ is relatively $\Delta(P\times P)$-projective. Assuming that $\ddot{P}\lneq \Delta(P\times P)$ would be a contradiction to the previous statement. This proves that the group-graded Morita equivalence is basic.
\end{proof}
\section{Source algebras of block extensions} \label{sec3}

\begin{nim} Let $G'=N_G(Q)$ and $H'=N_H(Q)$ as in Section \ref{sec4}. Similarly to \ref{n:source}, let $Q_{\delta'}\leq H'_{b'}$, $P_{\gamma'}\leq G'_{b'}$ be defect pointed groups with source idempotents $i'\in\gamma'$, $j'\in\delta '$ such that $j'=i'j'=j'i'$. Set
\[A'_{\gamma'}=i'A'i',  \qquad A'_{\delta'}=j'A'j', \qquad B'_{\delta'}=j'B'j', \qquad B'_{\gamma'}=i'B'i'.\]
It is well known (see \cite[Theorem 5 and Corollary 7]{ALR}) that there is an injective algebra map $B'_{\delta'} \to B_\delta$. We may adapt the argument to obtain a graded version this property, with an additional condition. Note that this condition obviously holds when $\bar G$ is a $p'$-group, or when $G=PH$ and $P$ is abelian, as in \cite{Zhou2}.
\end{nim}

\begin{proposition} \label{p:tibi} With the above notations assume that $N_G(Q_{\delta})\setminus N_G(P_{\gamma})\subseteq H$.  Then there is a unital injective homomorphism  \[A'_{\gamma'}\rightarrow A_{\gamma}, \qquad a'\mapsto fa'\] of $G/H$-graded algebras, where $f$ is a primitive idempotent in $(\Od H)^{G'}$ with defect $P$, which verifies $bb'f=f=fbb'$ and $i'f=fi'=i$.
\end{proposition}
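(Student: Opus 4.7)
The plan is to build upon the classical Alperin--Linckelmann--Rouquier construction and upgrade its $H'$-fixed idempotent to a $G'$-fixed one, using the hypothesis to push the invariance past the obstruction. A first observation is that the Frattini equality $G = H\,N_G(Q_\delta)$ recalled in \ref{n:source}, combined with the hypothesis, yields the set-theoretic decomposition $G' = H'\cdot\bigl(N_G(Q_\delta)\cap N_G(P_\gamma)\bigr)$, so that every class of $G'/H'\simeq\bar G$ admits a representative normalising both $Q_\delta$ and $P_\gamma$.

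Next, I would invoke \cite[Theorem 5 and Corollary 7]{ALR} to produce a primitive idempotent $f \in (bb'\mathcal{O}H)^{H'}$ with defect $P$ satisfying $i'f = fi' = i$ (after a standard adjustment within $\gamma$) and $bb'f = f = fbb'$; this already gives the ungraded embedding $B'_{\gamma'}\to B_\gamma$. To upgrade $f$ to be $G'$-invariant, I would exploit Proposition \ref{propPdefectbdelta} applied with $R=Q$ and $b_\epsilon = b_\delta$: the fact that $(P,b_\gamma)$ is a maximal $(N_G(Q_\delta), N_H(Q_\delta), b_\delta)$-Brauer pair shows that the relevant local idempotent in $kC_H(P)$ is already $\bigl(N_G(Q_\delta)\cap N_G(P_\gamma)\bigr)$-stable. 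Since the ALR lift is characterised up to $H'$-conjugation by this Brauer image, one can adjust $f$ inside its $H'$-orbit so that it is simultaneously $H'$-invariant and $\bigl(N_G(Q_\delta)\cap N_G(P_\gamma)\bigr)$-invariant; by the decomposition of $G'$ above, this forces $f\in(\mathcal{O}H)^{G'}$.

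With $f\in(\mathcal{O}H)^{G'}$ in hand, I would then verify that $\varphi : A'_{\gamma'}\to A_\gamma$, $a'\mapsto fa'$ (equivalently $fa'f$, since $fi'=i'f=i$), is a unital injective homomorphism of $\bar G$-graded $\mathcal{O}$-algebras. Conjugation-invariance of $f$ under $G'$ forces $f$ to commute with every element of $G'$ inside $\mathcal{O}G$; hence for $a'\in(A'_{\gamma'})_{\bar x} = i'\mathcal{O}H'x'b'i'$ with $\omega(x')=\bar x$, the image $\varphi(a')$ lies in $\mathcal{O}H\cdot x'\cdot b = A_{\bar x}$. Unitality reduces to $\varphi(i')=fi'=i$, multiplicativity to $f^2=f$ together with $f$-centrality in $A'$, and injectivity propagates from the ungraded embedding, because a morphism of strongly graded algebras is injective as soon as it is on the $1$-component.

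The main obstacle is the $G'$-invariance of $f$: averaging over $G'/H'$ does not preserve idempotency, so the invariance must be built in at the level of the local datum in $kC_H(P)$. Proposition \ref{propPdefectbdelta} supplies the required $\bigl(N_G(Q_\delta)\cap N_G(P_\gamma)\bigr)$-stability there, and the hypothesis $N_G(Q_\delta) \setminus N_G(P_\gamma) \subseteq H$ is exactly what converts this partial invariance, together with the $H'$-invariance from ALR, into the full $G'$-invariance needed for the graded embedding; without it, the ALR lift need not be $G'$-stable and the map $a'\mapsto fa'$ would fail to respect the grading.
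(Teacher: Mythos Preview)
Your strategy is genuinely different from the paper's, and as written it has a real gap at the upgrade step.

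You start from the Alperin--Linckelmann--Rouquier idempotent $f\in(\Od H)^{H'}$ and claim that, because its Brauer image in $kC_H(P)$ is $\bigl(N_G(Q_\delta)\cap N_G(P_\gamma)\bigr)$-stable, ``one can adjust $f$ inside its $H'$-orbit'' to make it $G'$-fixed. This sentence does not parse: $f$ is already $H'$-fixed, so its $H'$-conjugation orbit is $\{f\}$. If you mean instead the $((\Od H)^{H'})^\times$-conjugacy class (the point $\beta\ni f$), then you are asserting that a $G'$-stable point of $H'$ on $\Od H$ contains a $G'$-fixed primitive idempotent. That is false in general: take $A=M_2(k)$ with $\mathrm{char}\,k=p$, $H'=1$, and $G'=C_p$ acting by conjugation by a unipotent matrix; the unique point (rank-one idempotents) is $G'$-stable, yet $A^{G'}$ is local with $1$ as its only nonzero idempotent. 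So even granting that Proposition~\ref{propPdefectbdelta} gives $G'$-stability of $\beta$, you still owe an argument that the covering point of $G'$ on $\Od H$ has defect $P$ and that a primitive $\tilde f$ in it satisfies $\tilde f i'=i'\tilde f\in\gamma$. This is precisely where the hypothesis $N_G(Q_\delta)\setminus N_G(P_\gamma)\subseteq H$ has to do nontrivial work, and your sketch does not say how.

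The paper sidesteps the lifting-of-invariance problem by working on the module side. Writing $K_H,K,K_{H'}$ for the preimages of the diagonal of $\bar G\times\bar G$ in $G\times G$, $G\times G'$, $G'\times G'$, the indecomposable $\Od K_H$-module $\Od Hb$ and the $\Od K_{H'}$-module $\Od H'b'$ both have vertex $\Delta(P\times P)$; Green correspondence at the intermediate level produces an indecomposable $\Od K$-module $X=\Od Hf$ with $f\in(\Od H)^{G'}$ primitive automatically. The hypothesis is then used in a Mackey decomposition of $\Res^K_{H\Delta(P\times P)}\Ind^K_{H\Delta(P\times P)}(Z)$: it forces every double-coset term to be a conjugate of $Z$ with no proper induction, whence $M=X\otimes_{\Od H'}\Od H'i'$ is indecomposable with vertex $\Delta(P\times P)$, and $fi'=i'f$ lands in $\gamma$. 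Your final paragraph (checking that $a'\mapsto fa'$ is a unital injective $\bar G$-graded map once $f\in(\Od H)^{G'}$) is fine; the difficulty is entirely in producing such an $f$.
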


\begin{proof}  Note that the assumption implies that
\[G/H\simeq G'/H'\simeq N_G(Q_{\delta})/N_H(Q_{\delta})\simeq N_G(P_{\gamma})/N_H(P_{\gamma}).\]
Further, let $K_H,$ $K$ and $K_{H'}$ be  the inverse image in $G\times G$ of $\Delta(G/H\times G/H),$ the inverse image in $G\times G'$ of  $\Delta(G/H\times G'/H')$ and the inverse image in $G'\times G'$ of $\Delta(G'/H'\times G'/H')$, respectively. Then the inclusions
\[\Delta(P\times P)\leq N_{K_H}(\Delta(P\times P))\leq K_{H'}\leq K\leq K_H\]
of groups hold.

Now, the indecomposable $\mathcal{O}K_H$-module $\mathcal{O}Hb$, having $\Delta(P\times P)$ as vertex, is the Green correspondent of the indecomposable $\mathcal{O}K_{H'}$-module $\mathcal{O}H'b',$ which also has vertex $\Delta(P\times P).$  Using again the same correspondence, we determine a unique indecomposable $\mathcal{O}K$-module with vertex $\Delta(P\times P),$ say $X,$ that lies in $\Res_{K}^{K_H}(\mathcal{O}Hb)$ and in $\Ind_{K_{H'}}^{K}(\mathcal{O}H'b').$ Explicitly, we have that $X=\mathcal{O}Hf$ for a primitive idempotent $f$ lying in $(\mathcal{O}H)^{G'}$ such that $\Br_P(f)\neq 0.$ Note that $X\mid \Ind_{H\Delta(P\times P)}^{K}(Z)$ for an indecomposable $\mathcal{O}H\Delta(P\times P)$-module $Z$ that has vertex $\Delta(P\times P).$

We have that $\mathcal{O}H'i'$ is an indecomposable $\mathcal{O}H'\Delta(P\times P)$-module with vertex $\Delta(P\times P)$ that is also a source module of $\mathcal{O}H'b',$ where recall that $i'$ is a primitive idempotent of $(\mathcal{O}H')^{P}.$ We consider the $\mathcal{O}H\Delta(P\times P)$-module
\[M:=X\otimes_{\mathcal{O}H'}\mathcal{O}H'i',\] and we claim that $M$ is an indecomposable module with vertex $\Delta(P\times P)$, and that it is also a source module of $\mathcal{O}Hb.$ Indeed, we have that $M$ is a direct summand of  $\mathcal{O}Hi'$, and  the isomorphism
\[\mathcal{O}Hi'\simeq \Ind_{H'\Delta(P\times P)}^{H\Delta(P\times P)}(\mathcal{O}H'i')\]
of $\mathcal{O}H\Delta(P\times P)$-modules  shows that
\[M\mid \Ind_{H'\Delta(P\times P)}^{H\Delta(P\times P)}(\mathcal{O}H'i').\]
Let us emphasise that we have the inclusion
\[N_{H\Delta(P\times P)}(\Delta(P\times P))\leq H'\Delta(P\times P),\] and this forces the Green correspondent of $\mathcal{O}H'i'$ to lie in $M.$ It follows that $M=M'\oplus M''$ where $M'$ is an indecomposable $\mathcal{O}H\Delta(P\times P)$-module with vertex $\Delta(P\times P),$ while $M''$ is a direct sum of indecomposable modules with vertices strictly smaller than $\Delta(P\times P).$

We have, by our hypothesis,
\begin{align*}
M&\mid \Res^K_{H\Delta(P\times P)}(X)\mid \Res^K_{H\Delta(P\times P)}(\Ind_{H\Delta(P\times P)}^{K}(Z))\\
&=\Sum_{(x,x')\in [H\Delta(P\times P)/K \setminus H\Delta(P\times P)]}\Ind_{H\Delta(P\times P)\cap (H\Delta(P\times P))^{(x,x')}}^{H\Delta(P\times P)}(Z^{(x,x')})\\
&=\Sum_{(x,x')\in [H\Delta(P\times P)/K \setminus H\Delta(P\times P)]}Z^{(x,x')}.
\end{align*}
Consequently $M''=0$ and then $M=M'$ is the Green correspondent of $\mathcal{O}H'i',$ this forces $fi'=i'f$ to be  primitive idempotent of $(\mathcal{O}H)^P$ with $bfi'=fi'=fi'b,$ hence $M$   is a source module of $\mathcal{O}Hb.$ We may therefore assume that $fi'=i$ and that $M=\mathcal{O}Hi.$

With these notations we get the isomorphism
\[\mathcal{O}H'i\simeq \mathcal{O}H'f\otimes_{\mathcal{O}H'}\mathcal{O}H'\simeq \mathcal{O}H'i'\]
of $\mathcal{O}H\Delta(P\times P)$-modules, hence $\mathcal{O}H'i'\mid\Res^{H\times 1}_{H'\times 1}(\mathcal{O}Hi).$ We obtain a $P$-algebra homomorphism
\[B'_{\gamma'}\simeq \End_{\mathcal{O}H'}(\mathcal{O}H'i')\to \End_{\mathcal{O}H}(\mathcal{O}Hi)\simeq B_{\gamma},\] which is given by $a'\mapsto a'f=fa'$ for any $a'\in B'_{\gamma'}$. Finally, this map extends in an obvious way to a homomorphism \[A'_{\gamma'}\to A_{\gamma},\] of $P$-interior $\bar G$-graded algebras  which,   since
\[\mathcal{O}G'i'\simeq \mathcal{O}G'\otimes_{\mathcal{O}H'}\mathcal{O}H'i'\mid \mathcal{O}G'\otimes_{\mathcal{O}H'}\mathcal{O}Hi \mid \mathcal{O}Gi,\]
it is also injective.
\end{proof}
\section{Lifting Morita equivalences} \label{sec5}

In this section we give a general technical lemma which is useful to lift Morita equivalences between $1$-components to $G$-graded Morita equivalences. The notation below will be used only in this section.

\begin{nim} Let $G$ be a finite group, and let $A$ and $A'$  be  strongly $G$-graded $\Od$-algebras with $1$-components $B:=A_1$ and $B'=A_1'$. Denote $\bar{A}:=A/J_{\mathrm{gr}}(A)$ and $\bar{A'}:=A'/J_{\mathrm{gr}}(A')$.
\end{nim}

\begin{nim} We consider the diagonal subalgebra
\[\Delta:=\Delta(A\otimes {A'}^{\mathrm{op}}) = \sum_{g\in G}A_g\otimes A'_{g^{-1}}\]
of $A\otimes {A'}^{\mathrm{op}}$, and let
\[\bar{\Delta}:=\Delta/J_{gr}(\Delta)\simeq \Delta(\bar{A}\otimes_k\bar{A'}^{\mathrm{op}}).\]
\end{nim}

\begin{nim}  Let $M$ be a $G$-invariant $\Delta_1$-module (that is, a $G$-invariant $(B,B')$-bimodule). As in \ref{ss:Clifford-ext}, this means that $A_g\otimes_B M\otimes_B A_{g^{-1}}\simeq M$ as $(B,B)$-bimodules. Let $\bar M:=M/J(\Delta_1)M$, and consider the $G$-graded endomorphism algebras
\[\mathcal{D}:=\End_{\Delta}(\Delta\otimes_{\Delta_1}M)^{\mathrm{op}}, \qquad \bar{{\mathcal{D}}}:=\End_{\bar{\Delta}}(\bar{\Delta}\otimes_{\bar{\Delta_1}}\bar{M})^{\mathrm{op}}.\]
As in \ref{ss:Clifford-ext}, the group extension
\begin{equation*}\label{eq1}\xymatrix{1\ar[r] & \mathcal{D}_1^{\times}\ar[r] &\hU(\mathcal{D})\ar[r]&G\ar[r] &1} \end{equation*}
is  the Clifford extension of the $\ \ \Delta_1$-module $ \ \ M$, and the  group extension $\ \ \hU(\mathcal{D} / J_{\mathrm{gr}}(\mathcal{D}))\ \ \ $ is the {residual Clifford extension} of $M$.
\end{nim}

\begin{lemma} \label{lemaDade*} With the above notations, assume that the following conditions hold.
\begin{enumerate}
\item[{\rm(1)}] $M$  induces a Morita equivalence between $B$ and $B'$.
\item[{\rm(2)}] $\bar{M}$ is a simple $\bar{\Delta_1}$-module.
\item[{\rm(3)}] The algebra $\mathcal{D}_1$ is commutative.
\item[{\rm(4)}] $\End_{\bar{A}}(\bar{A}\otimes_{\bar{B}}\bar{M})^{\mathrm{op}}\simeq \bar{A}'$.
\item[{\rm(5)}] For a Sylow $p$-subgroup $P$ of $G$, $M$ extends to a $\Delta_P$-module.
\end{enumerate}
Then $A\otimes_B M$ induces a $G$-graded Morita equivalence between $A$ and $A'$.
\end{lemma}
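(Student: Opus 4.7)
The strategy is to show that the $\Delta_1$-module $M$ extends to a $\Delta$-module; by the standard dictionary for strongly graded algebras (\cite[Section 2.2]{Ma}), such an extension endows $A\otimes_B M$ with a compatible right $A'$-action, making it a $G$-graded $(A,A')$-bimodule, and condition~(1) on the $1$-component then promotes this to a $G$-graded Morita equivalence. The lemma therefore reduces to the vanishing of the Clifford extension class $[\mathcal{D}]\in H^2(G,\mathcal{D}_1^\times)$ (abelian coefficient by condition~(3)).

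I would first analyse $\mathcal{D}_1^\times$. By~(1) and Morita theory, $\mathcal{D}_1=\End_{\Delta_1}(M)^{\mathrm{op}}\simeq Z(B)$, a commutative local $\mathcal{O}$-algebra; by~(2) and Schur's lemma (using that $k$ is algebraically closed), $\mathcal{D}_1/J(\mathcal{D}_1)\simeq k$. Hensel's lemma then supplies a canonical Teichm\"uller splitting of
\[1\to 1+J(\mathcal{D}_1)\to \mathcal{D}_1^\times\to k^\times\to 1,\]
which is $G$-equivariant by its uniqueness, yielding the decomposition
\[H^2(G,\mathcal{D}_1^\times)\simeq H^2(G,1+J(\mathcal{D}_1))\oplus H^2(G,k^\times).\]
It is therefore enough to kill the two components of $[\mathcal{D}]$ separately.

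The image of $[\mathcal{D}]$ in $H^2(G,k^\times)$ is the residual Clifford class of $M$, which obstructs extending $\bar M$ to $\bar\Delta$. Condition~(4) provides exactly such an extension: the graded $(\bar A,\bar A')$-bimodule $\bar A\otimes_{\bar B}\bar M$ restricts to a $\bar\Delta$-module whose $1$-component is $\bar M$, so this residual class vanishes. The remaining component lies in $H^2(G,1+J(\mathcal{D}_1))$, which is $p$-primary because the successive quotients of the $J$-adic filtration of $1+J(\mathcal{D}_1)$ are $k$-vector spaces; the usual restriction-corestriction argument (using $[G:P]$ coprime to $p$) then shows that restriction to a Sylow $p$-subgroup $P$ of $G$ is injective on this group. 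By condition~(5), $M$ extends to a $\Delta_P$-module, so $[\mathcal{D}]|_P=0$, killing this last component.

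Hence $[\mathcal{D}]=0$, and $M$ extends to a $\Delta$-module $N$. As a graded left $A$-module $N\simeq A\otimes_B M$, and the right $A'$-action induced by the $\Delta$-structure supplies the required graded $(A,A')$-bimodule, whence the graded Morita equivalence. The main obstacle is the cohomological splitting step: one must carefully justify the $G$-equivariance of the Teichm\"uller decomposition, handle the pro-$p$ coefficient group $1+J(\mathcal{D}_1)$ via an inverse-limit presentation so that the restriction-corestriction injectivity applies, and verify that conditions~(4) and~(5) precisely produce the vanishing of the two cohomological components --- all in the spirit of Dade's extendibility machinery.
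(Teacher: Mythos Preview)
Your proposal is correct and reaches the same conclusion as the paper --- that the Clifford extension $\hU(\mathcal{D})$ splits, whence $M$ extends to $\Delta$ and \cite[Theorem 5.1.2]{Ma} gives the graded Morita equivalence --- but you organise the splitting argument differently. The paper proceeds \emph{Sylow by Sylow}, quoting Dade's packaged results: condition~(2) identifies the residual Clifford extension of $M$ with the Clifford extension of $\bar M$ (via \cite[Lemma 2.4]{Ma3}); condition~(4) makes $\bar A\otimes_{\bar B}\bar M$ a graded $(\bar A,\bar A')$-bimodule, so $\bar M$ extends to $\bar\Delta$ and the residual extension splits; then \cite[Theorem 2.8]{D84} converts residual splitting into splitting of $\hU(\mathcal{D}_Q)$ for every Sylow $q$-subgroup with $q\neq p$; condition~(5) handles the Sylow $p$-subgroup; and finally \cite[Theorem 7.2]{D84} (using commutativity of $\mathcal{D}_1$) assembles the local splittings into a global one. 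You instead decompose the \emph{coefficients}: the Teichm\"uller section splits $\mathcal{D}_1^\times$ as $(1+J(\mathcal{D}_1))\times k^\times$ $G$-equivariantly, so $H^2(G,\mathcal{D}_1^\times)$ decomposes accordingly; condition~(4) kills the $k^\times$-component (the residual class), and condition~(5) together with restriction--corestriction kills the pro-$p$ component. Your route is more explicitly cohomological and self-contained, avoiding the black-box citations to \cite{D84}, at the price of having to handle the inverse-limit presentation of $1+J(\mathcal{D}_1)$ and to check that condition~(2) really forces $\mathcal{D}_1/J(\mathcal{D}_1)\simeq k$ (you need $k$ algebraically closed here, which the paper assumes); the paper's route is shorter because Dade already did this work. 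Both arguments unwind to the same transfer-type cohomological facts.
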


\begin{proof} Since $\bar{M}$ is a simple $\bar{\Delta_1}$-module,  we have that $\bar{{\mathcal{D}}}\simeq {\mathcal{D}}/J_{gr}({\mathcal{D}})$ (see \cite[Lemma 2.4]{Ma3}), which means that the Clifford extension $\hU(\bar E)$ of $\bar M$ is also the residual Clifford extension of $M$.

Condition (4) implies that $\bar{A}\otimes _{\bar{B}}\bar{M}$ is a $G$-graded $(\bar{A},\bar{A}')$-bimodule, so by \cite[Lemma 1.6.3]{Ma}, $\bar{M}$ extends to a $\bar{\Delta}$-module. By \cite[(1.7)]{D84} it follows that  the Clifford extension $\hU(\bar{\mathcal{D}})$ of $\bar M$ splits. Consequently, by \cite[Theorem 2.8]{D84}, we deduce that for any Sylow $q$-subgroup $Q$ of $G$, where $q\neq p$,  the Clifford extension
\begin{equation*} \label{eq4} \xymatrix{1\ar[r]& \mathcal{D}_1^{\times}\ar[r] &\hU(\mathcal{D}_Q)\ar[r]& Q\ar[r] &1}\end{equation*}
of $M$ splits. By assumption (5) we have that the extension
\begin{equation*} \label{eq4.1} \xymatrix{1\ar[r]& \mathcal{D}_1^{\times}\ar[r] &\hU(\mathcal{D}_P)\ar[r]& P\ar[r] &1}\end{equation*}
also splits.

Since $\mathcal{D}_1$ is commutative, we deduce that the Clifford extension $\hU(\mathcal{D})$ splits (see, for instance \cite[Theorem 7.2]{D84}), hence $M$ extends to a $\Delta$-module. It follows by \cite[Theorem 5.1.2]{Ma} that
\[A\otimes_B M\simeq M\otimes_{B'}A'\simeq (A\otimes (A')^{\mathrm{op}})\otimes_{\Delta}M\]
induces a $G$-graded Morita equivalence between $A$ and $A'$.
\end{proof}

\section{Extensions of nilpotent and inertial blocks}\label{sec7}

\begin{nim}Let $b$ be a $G$-invariant block of $\mathcal{O}H$ which is with defect pointed group $Q_{\delta}$ in $H_{\{b\}}$, as in the Introduction, and let $j\in\delta$. Because of the Fong-Reynolds reduction from Section \ref{sec4}, the notations in this section and in the last one are as follows:
\[A':=\mathcal{O}N_G(Q_\delta)b_\delta, \qquad  B':=\Od N_H(Q_\delta)b_\delta; \qquad C':=\Od QC_H(Q)b_\delta.\]
and we regard $A'$ as an $\tilde E$-graded algebra with $1$-component $C'$.
Let $A'_{\gamma'}=i'\mathcal{O}N_G(Q_\delta)i'$ be the source algebra of the block extension $A'$.
\end{nim}

\subsection{Extensions of nilpotent blocks}\label{ss:sec7} \medskip

We assume in this subsection that $B=\Od Hb$ is a nilpotent block, and we refer to \cite[Chapter 7]{The} for a comprehensive presentation of Puig's theorem on the source algebras of nilpotent blocks.

\begin{nim}  First, note that  $N_H(Q_{\delta})=QC_H(Q)$, hence  $\tilde E=\bar G$ and $C'=B'$. Moreover,  the  source algebra $B_\delta =jBj$ has an $\Od$-simple, $Q$-stable  subalgebra $S_\delta$ such that \[jBj\simeq S_\delta\otimes \Od Q;\qquad S_\delta\simeq \End_\mathcal{O}(V_\delta)\]
where $V_\delta$ is the unique (up to isomorphism) $\mathcal{O}$-simple $jBj$-module, and $p\nmid \mathrm{rank}_{\Od}(S_\delta),$ that is, $S_\delta$ is a Dade $Q$-algebra; denoting $\bar{V}_\delta=k\otimes_{\Od}V_\delta$ and $\bar{S}=k\otimes_{\Od}S$, we have $\bar{S}_\delta\simeq jBj/J(jBj)$ is a simple $k$-algebra, and $\bar{V}_\delta$ the unique simple $\bar{S}_\delta$-module. The Morita equivalence between $jBj$ and $\Od Q$ is given  by the functor
\[V_\delta\otimes_{\Od}-:\Od Q\textrm{-}\mathrm{Mod}\rightarrow jBj\textrm{-}\mathrm{Mod}.\]
Since $B$ is Morita equivalent to $\Od Q$, there is a unique $\Od$-simple $B$-module $U$, and  to $U$ it corresponds a unique $\Od$-simple $jBj$-module $V_\delta$ such that $U=Bj\otimes_{jBj}V_\delta$.
\end{nim}

The following Lemma should be compared to \cite[1.11 and 1.15]{KuPu}

\begin{lemma} \label{l:Cliffordext} The residual Clifford extensions of $U$ and $V_\delta$ are isomorphic to  $k_{\alpha}\tilde E$ as $\tilde E$-graded algebras, where $\alpha\in Z^2(\tilde E,k^{\times})$ is defined in \ref{d:cocycle-beta}.
\end{lemma}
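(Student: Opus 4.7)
The plan has two steps: reduce from $U$ to $V_\delta$, then identify the Clifford extension of $V_\delta$ with $k_{\alpha}\tilde{E}$.

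For the reduction, recall from \ref{n:source} that the bimodule $Aj$ induces a $\bar G$-graded Morita equivalence between $A$ and $A_\delta=jAj$, under which $U$ corresponds to $V_\delta=jU$. Both modules are $\bar G$-invariant (being the unique $\mathcal{O}$-simple modules of their respective blocks), with endomorphism ring $\mathcal{O}$. The $\bar G$-graded Morita equivalence identifies $\End_A(A\otimes_B U)^{\mathrm{op}}$ with $\End_{A_\delta}(A_\delta\otimes_{B_\delta}V_\delta)^{\mathrm{op}}$ as $\bar G$-graded algebras (see \ref{ss:Clifford-ext}), and quotienting by the graded Jacobson radical identifies the two residual Clifford extensions. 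So it suffices to show that the residual Clifford extension of $V_\delta$ is $k_{\alpha}\tilde{E}$.

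In the nilpotent situation $\tilde E=\bar G$ and $N_H(Q_\delta)=QC_H(Q)$, so $A'=\mathcal{O}N_G(Q_\delta)b_\delta$ is a $\bar G$-graded crossed product with identity component $C'=\mathcal{O}QC_H(Q)b_\delta$, and $k_{\alpha}\tilde{E}$ is by definition the residual Clifford extension of the unique simple $C'$-module $\bar V$. I will compute the cocycle of $V_\delta$ by choosing, for each $g\in\bar G$ with representative $x\in N_G(Q_\delta)$, compatible homogeneous units $a_g\in(A_\delta)_g$ and $a'_g\in A'_g$ (which exist since both algebras are $\bar G$-graded crossed products), and then matching the resulting $2$-cocycles via the Brauer homomorphism. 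The key link is the source algebra structure: by Puig's theorem on nilpotent source algebras, $B_\delta\simeq S_\delta\otimes\mathcal{O}Q$, and the same theorem applied to the nilpotent block $b_\delta$ of $\mathcal{O}QC_H(Q)$ (with defect $Q$ and source idempotent $\Br_Q(j)$) yields $\Br_Q(j)\, C'\, \Br_Q(j)\simeq S'\otimes\mathcal{O}Q$ for a Dade $Q$-algebra $S'$ canonically isomorphic to $S_\delta$. This identification carries $\bar V_\delta$ to $\bar V$ and allows the cocycles to be compared in $k^\times$.

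The main obstacle is to make this comparison rigorous. The algebras $A_\delta$ and $A'$ are not directly related as subalgebras of a common algebra, and the link through the Brauer homomorphism $\Br_Q$ requires care since $B_\delta$ is an $\mathcal{O}$-algebra while $\Br_Q$ lands in characteristic $p$. I expect this to be resolved by taking $a'_g=xb_\delta$ and transporting it via a source-algebra embedding associated with the normal subgroup inclusion $QC_H(Q)\le H$, in the spirit of Section \ref{sec3}, to obtain a matching unit $a_g\in(A_\delta)_g$. The uniqueness of the Dade $Q$-algebra $S_\delta$ then guarantees that the cocycles agree up to a coboundary in $Z^2(\bar G,k^\times)$, yielding the desired isomorphism of residual Clifford extensions with $k_{\alpha}\tilde{E}$.
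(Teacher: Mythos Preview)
Your reduction from $U$ to $V_\delta$ via the $\bar G$-graded Morita equivalence between $A$ and $A_\delta$ is correct and matches the paper's first step.

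The second step, however, has a genuine gap. Suppose you succeed in transporting homogeneous units from $A'=\mathcal{O}N_G(Q_\delta)b_\delta$ into $A_\delta$ (the paper does this too, though not via Section~\ref{sec3}, whose embedding concerns the Brauer correspondent in $N_H(Q)$ and carries an extra hypothesis; instead the paper uses the elementary unital embedding $jB^Q\otimes_{C_H(Q)}N_G(Q_\delta)j\hookrightarrow jAj$, $b\otimes x\mapsto bx$). Passing to residues, this gives $\bar A_\delta\simeq \bar S_\delta\otimes k_\alpha\tilde E$ as $\tilde E$-graded algebras. But this by itself does \emph{not} identify the residual Clifford extension of $\bar V_\delta$ with $k_\alpha\tilde E$: one still needs $\bar V_\delta$ to extend to the diagonal subalgebra $\Delta(\bar A_\delta\otimes(k_\alpha\tilde E)^{\mathrm{op}})\simeq \bar S_\delta\otimes k\tilde E$, i.e.\ one needs the Clifford extension of $\bar V_\delta$ regarded as an $\bar S_\delta$-module with respect to the untwisted $k\tilde E$-action to split. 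Your appeal to ``uniqueness of the Dade $Q$-algebra $S_\delta$'' does not give this; uniqueness of $S_\delta$ is a statement about the algebra, not about the triviality of the $2$-cocycle measuring the obstruction to extending $\bar V_\delta$.

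The paper closes this gap with an ingredient you do not mention: Dade's theorem that the residual Clifford extension of an invariant indecomposable endopermutation module splits. Concretely, the paper constructs an $\tilde E$-graded algebra map
\[
\End_{\bar S_\delta\otimes k\tilde E}\bigl((\bar S_\delta\otimes k\tilde E)\otimes_{\bar S_\delta}\bar V_\delta\bigr)\longrightarrow \End_{kL}\bigl(kL\otimes_{kQ}\bar V_\delta\bigr)
\]
via restriction of scalars along $kQ\to \bar S_\delta$, using Remark~\ref{r:conjQ} to match the $\tilde E$-actions; then Dade's theorem applied to the endopermutation $kQ$-module $\bar V_\delta$ in the extension $Q\trianglelefteq L$ forces the right-hand side, and hence the left-hand side, to split. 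Without this (or an equivalent argument), your proposed comparison of cocycles stops one step short of the conclusion.
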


\begin{proof} From the $\tilde E$-graded Morita equivalence between $A$ and $jAj$ we obtain the isomorphism
\[\End_A(A\otimes_B U)^{\mathrm{op}}\simeq\End_{jAj}(jAj\otimes_{jBj}V_\delta)^{\mathrm{op}}\]
of $\tilde E$-graded algebras, see \cite[Corollary 5.1.4]{Ma}. In particular, $U$ and $V_\delta$ have isomorphic residual Clifford extensions.

Observe that $B^Q$ and $N_G(Q_\delta)$ generate a $\bar G$-graded subalgebra of $A$, isomorphic to $B^Q\otimes _{C_H(Q)}N_G(Q_\delta)$, with $1$-component $B^Q\otimes _{C_H(Q)}Q$. We have that $B^Q\otimes _{C_H(Q)}Qj$ is an $N_G(Q_\delta)$-invariant indecomposable projective $B^Q\otimes _{C_H(Q)}Q$-module.  By \cite[Proposition 6.2]{CoMaTo}, its residual Clifford extension is isomorphic to  $k_{\alpha}\tilde E$ as $\tilde E$-graded algebras, since $\tilde E\simeq \bar G$ in our situation. But we have a unital injective map
\[jB^Q\otimes _{C_H(Q)}N_G(Q_\delta)j \to jAj\]
of $\bar G$-graded algebras, sending $b\otimes x $ to $bx$ for all $b\in B^Q$ and $x\in N_G(Q_\delta)$. By taking quotients modulo the graded Jacobson radicals, we obtain a homomorphism
\[k_\alpha \tilde E \to \bar A_\delta:=A_\delta/J_{\mathrm{gr}}(A_\delta)\]
of $\tilde E$-graded algebras. In particular, $\tilde E$ acts on $\bar S_\delta$ and $\bar A_\delta$ is a crossed product of the form $\bar A_\delta\simeq \bar S_\delta\otimes k_\alpha \tilde E$.

To prove the lemma, we need to show the isomorphism
\[\End_{\bar A_\delta}(\bar A_\delta\otimes_{\bar S_\delta}\bar V_\delta)^\mathrm{op}\simeq k_\alpha \tilde E\]
of $\tilde E$-graded algebras. But this is the same to show that the $\bar S_\delta$-module structure of $\bar V_\delta$ extends to a module structure over the diagonal subalgebra
\[\Delta(\bar A_\delta\otimes (k_\alpha \tilde E)^\mathrm{op})\simeq \bar S_\delta\otimes k\tilde E.\]
This condition, in turn, is equivalent to the splitting of the Clifford extension $\End_{\bar S_\delta\otimes k\tilde E}((\bar S_\delta\otimes k\tilde E)\otimes_{S_\delta}\bar V_\delta)$.

Since $\bar V_\delta$ is an $\tilde E$-invariant indecomposable endopermutation $kQ$-module, its residual Clifford extension splits, by Dade's theorem. Therefore, it is enough to show that there is an $\tilde E$-graded algebra homomorphism
\[\End_{\bar S_\delta\otimes k\tilde E}((\bar S_\delta\otimes k\tilde E)\otimes_{S_\delta}\bar V_\delta) \to \End_{kL}(kL\otimes_{kQ}\bar V_\delta).\]
Indeed, such a homomorphism exists, and it is defined (in a way similar to \cite[Proposition 5.4]{CoMaTo}) as follows. Let $\tilde x\in \tilde E$, where $x\in N_G(Q_\delta)$,  and let $y\in L$ such that $\pi(y)=\tilde x$. An element of degree $\tilde x$ from the domain is a $k$-linear map $f:V_\delta\to V_\delta$ satisfying $f\circ s=s^{\tilde x}\circ f$ for all $s\in \bar S_\delta$, while an element of degree $\bar y\in L/Q$ from the codomain is a $k$-linear map $f':V_\delta\to V_\delta$ such that $f'(uv)=u^yf'(v)$ for all $v\in V$ and $u\in Q$. It is straightforward to check, by using Remark \ref{r:conjQ}, that the restriction of scalars $f\mapsto f$, via $kQ\to S$, gives the required $\tilde E$-graded algebra homomorphism.
\end{proof}

\begin{nim} We know  by \cite[Proposition 6.5]{KuPu}, or by \cite[Theorem 2]{Ca} that $BP=b\Od PH$ is also a nilpotent block of $\Od PH$. Then, by Puig's results on nilpotent blocks, we have:
\begin{itemize}
\item[$\bullet$] the $\bar P\simeq P/Q$-graded source algebra $iBPi$ has an $\Od$-simple, $P$-stable  subalgebra $S_\gamma$ such that \[iBPi\simeq S_\gamma\otimes \Od P;\qquad S_\gamma\simeq \End_\mathcal{O}(V_\gamma)\]
where $V_\gamma$ is the unique $\mathcal{O}$-simple $iBPi$-module, and $p\nmid \mathrm{rank}_{\Od}(S_\gamma),$ so $S_\gamma$ is a Dade $P$-algebra;
\item[$\bullet$] denoting $\bar{V}_\gamma=k\otimes_{\Od}V_\gamma$ and $\bar{S}_\gamma=k\otimes_{\Od}S$, we have that $\bar{S}_\gamma\simeq iBPi/J(iBPi)$ is a simple $k$-algebra, and  $\bar{V}_\gamma$ is the unique simple $\bar{S}_\gamma$-module.
\end{itemize}
\end{nim}

Now, taking into account \ref{subsec82}, Theorem \ref{thmA} b) is a consequence of the following more precise statement.

\begin{thm} \label{t:extnilp} Assume that $B=\Od Hb$ is a nilpotent block. There is a $\bar G$-graded Morita equivalence between $A_\gamma$ and $\Od_{\alpha}L$ induced by $V_\gamma$, or equivalently, an isomorphism
\[A_\gamma \simeq S_\gamma\otimes \Od_{\alpha}L.\]
\end{thm}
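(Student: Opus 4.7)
The plan is to apply Lemma~\ref{lemaDade*} with $A=A_\gamma$ and $A'=\mathcal{O}_\alpha L$ (regarded as a $\bar G$-graded algebra via the identification $\tilde E\simeq\bar G$ available in the nilpotent case), $1$-components $B=B_\gamma=iBi$ and $B'=\mathcal{O}Q$, and the $(B_\gamma,\mathcal{O}Q)$-bimodule $M=V_\gamma$. A preliminary observation is essential: since $BP=b\mathcal{O}PH$ is nilpotent with defect group $P$, Puig's structure theorem gives an isomorphism of $\bar P$-graded $P$-interior algebras $iBPi\simeq S_\gamma\otimes\mathcal{O}P$, which restricts to $B_\gamma\simeq S_\gamma\otimes\mathcal{O}Q$ on $1$-components. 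This equips $V_\gamma$ with compatible actions of $iBPi$ from the left and $\mathcal{O}P$ from the right, which will supply both the Morita equivalence on $1$-components and the Sylow extension.

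Next, I would verify the five conditions of Lemma~\ref{lemaDade*}. Conditions (1)--(3) are routine from the tensor decomposition: (1) the bimodule $V_\gamma$ induces a Morita equivalence $B_\gamma\sim\mathcal{O}Q$; (2) $\bar V_\gamma$ is simple over $\bar B_\gamma\simeq\bar S_\gamma\otimes kQ$ since $kQ$ is local; (3) $\mathcal{D}_1\simeq Z(\mathcal{O}Q)=\mathcal{O}Z(Q)$ is commutative. For (4), one needs $\End_{\bar A_\gamma}(\bar A_\gamma\otimes_{\bar B_\gamma}\bar V_\gamma)^{\mathrm{op}}\simeq k_\alpha\bar G$; this is the direct analogue of Lemma~\ref{l:Cliffordext} with $V_\gamma$ in place of $V_\delta$, and its proof transports verbatim. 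Alternatively, one can invoke the $\bar G$-graded Morita equivalence between $A_\delta$ and $A_\gamma$ from \ref{n:source}, which preserves residual Clifford extensions, and apply Lemma~\ref{l:Cliffordext} to $V_\delta$ directly.

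The heart of the argument is condition (5): $V_\gamma$ must extend from $\Delta_1=B_\gamma\otimes\mathcal{O}Q^{\mathrm{op}}$ to the subalgebra $\Delta_{\bar P}$, where $\bar P=P/Q$ is a Sylow $p$-subgroup of $\tilde E\simeq\bar G$. The $\bar P$-graded component of $A_\gamma$ is $iBPi$, and that of $(\mathcal{O}_\alpha L)^{\mathrm{op}}$ is $(\mathcal{O}_\alpha P)^{\mathrm{op}}$ along the lift $\tau\colon P\to L$. The decisive claim is that $\alpha$ pulls back trivially along $\tau$, reflecting the nilpotent-specific fact that $\bar V$ extends to an $\mathcal{O}PC_H(Q)b_\delta$-module; granting this, the $(iBPi,\mathcal{O}P)$-bimodule structure on $V_\gamma$ from Puig's decomposition provides the required $\Delta_{\bar P}$-module structure. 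I expect this triviality of $\alpha|_P$ to be the principal obstacle, and it is where the nilpotent hypothesis is used in a way that cannot be weakened directly.

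With all five conditions verified, Lemma~\ref{lemaDade*} yields a $\bar G$-graded Morita equivalence between $A_\gamma$ and $\mathcal{O}_\alpha L$ induced by $V_\gamma$, and computing endomorphism rings then delivers the asserted isomorphism $A_\gamma\simeq\End_{\mathcal{O}_\alpha L}(V_\gamma\otimes_{\mathcal{O}Q}\mathcal{O}_\alpha L)^{\mathrm{op}}\simeq S_\gamma\otimes\mathcal{O}_\alpha L$ of $\bar G$-graded algebras, proving the equivalent formulation given in the theorem.
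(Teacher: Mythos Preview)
Your approach is essentially the paper's: both apply Lemma~\ref{lemaDade*} to the pair $(A_\gamma,\mathcal{O}_\alpha L)$ and verify conditions (1)--(5) in the same way, including the use of the $\bar G$-graded Morita equivalence between $A_\delta$ and $A_\gamma$ together with Lemma~\ref{l:Cliffordext} for condition~(4). Two small points of precision: the Morita $(B_\gamma,\mathcal{O}Q)$-bimodule is $M=V_\gamma\otimes\mathcal{O}Q$ rather than $V_\gamma$ itself (a rank count shows $V_\gamma$ alone is too small), and for condition~(5) the paper simply observes that $M$ carries a diagonal $P$-action since $V_\gamma$ is an $\mathcal{O}P$-module and $\mathcal{O}Q$ is a $P$-algebra; the triviality of $\alpha|_P$ that you highlight is automatic because $\alpha$ is inflated from $\tilde E$ with values in a $p'$-group and $P/Q$ is a $p$-group, so it is not a nilpotent-specific obstruction.
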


\begin{proof} The $\bar P$-graded Morita equivalence between $A_\gamma$ and $\Od P$ restricts to the Morita equivalence $V_\gamma\otimes_\Od -:\Od Q\textrm{-Mod}\to B_\gamma\textrm{-Mod}$.  We aim to use Lemma \ref{lemaDade*} to lift this to a  Morita equivalence
\[V_\gamma\otimes_\Od -:\Od_{\alpha}L \textrm{-Mod}\to A_\gamma\textrm{-Mod}.\]
We have that $\bar A_\gamma:=A_\gamma/J_\mathrm{gr}(A_\gamma)$ is a $\bar G$-graded crossed product with $1$-component $\bar B_\gamma=B_\gamma/J(B_\gamma)\simeq \bar S_\gamma$. The algebra $R:=\Od _\alpha L$ is $\bar G\simeq L/Q$-graded with $1$-component $\Od Q$, and $\bar R:=R/J_{\rm gr}(R)\simeq k_\alpha \bar G$. Let
\[\Delta:=\Delta (A_\gamma\otimes _{\Od}R^{\mathrm{op}}), \qquad  \Delta_1=B_\gamma\otimes _{\Od}(\Od Q)^{\mathrm{op}}, \qquad  \bar{\Delta}=\Delta/J_{gr}(\Delta).\]
We have $\Delta_1\simeq (S_\gamma\otimes \Od Q)\otimes (\Od Q)^\mathrm{op}$, $\Delta_1/J(\Delta_1)\simeq \bar S_\gamma$. From $V_\gamma$ we get the $(iBi,\Od Q)$-bimodule
\[M=M_\gamma:=V_\gamma\otimes \Od Q\]
inducing a Morita equivalence between $iBi$ and $\Od Q$, hence $M$ satisfies condition (1) of Lemma \ref{lemaDade*}. Let $\bar M=M/J(\Delta_1)M$; then $\bar M\simeq \bar V_\gamma$ is a simple $\bar S_\gamma$-module, hence $M$ satisfies condition (2) of Lemma \ref{lemaDade*}. Moreover,
\[\End_{\Delta_1}(M)\simeq \End_{S_\gamma\otimes\Od Q}(V_\gamma \otimes\Od Q)\simeq Z(\Od Q)\]
is commutative, therefore condition (3) of Lemma \ref{lemaDade*} also holds. For the Clifford extensions, observe that
\[\End_{\bar A_\gamma}(\bar A_\gamma\otimes_{\bar B_\gamma}\bar M_\gamma)^{\textrm{op}} \simeq \End_{\bar A_\delta}(\bar A_\delta\otimes_{\bar B_\delta}\bar M_\delta)^{\textrm{op}} \simeq k_\alpha\bar G, \]
as $\bar G$-graded $k$-algebras, where the first isomorphism is a consequence of the $\bar G$-graded Morita equivalence between $A_\delta$ and $A_\gamma$ (see \cite[Propositon 3.2]{Ma2}), while the second isomorphism follows from Lemma \ref{l:Cliffordext}. This gives condition (4) of Lemma \ref{lemaDade*}. Finally, $M=V_\gamma\otimes \Od Q$ has a diagonal action of $P$, because $\Od Q$ is a $P$-algebra and $V_\gamma$ is an $\Od P$-module, so condition (5) of Lemma \ref{lemaDade*} also holds.

It follows that $M$ extends to $\Delta$, and consequently, $A_\gamma\otimes_{B_\gamma}M$ induces an $\bar G$-graded Morita equivalence between $A_\gamma$ and $\Od_{\alpha}L$.
\end{proof}

\subsection{Extensions of  blocks with normal defect groups}\label{ssec4} \medskip

\begin{nim} Consider the block extension $A'=\Od N_G(Q_{\delta})b_{\delta}$, regarded as an $\tilde E$-graded algebra with identity component $C':=\Od QC_H(Q)b_{\delta}$. The block  $b_{\delta}$ has defect pointed group $Q_{\delta'}$ in $N_H(Q_{\delta})$ and, by \ref{propPdefectbdelta} applied for $R=Q$, it has defect pointed group $P_{\gamma'}$ in $N_G(Q_{\delta})$.

We know that the source algebra of  $C'$ is  $j'\Od QC_H(Q)j'\simeq \Od Q$, where $j'\in \delta'$, and the bimodule which gives the Morita  equivalence is $\Od QC_H(Q)j'$, so here we have a particular case of extensions of nilpotent blocks.  By  K\"ulshammer \cite[Theorem A]{Ku}, (see also \cite[Theorem 13]{ALR}), we have that the source algebra of  $B'=\Od N_H(Q_{\delta})b_{\delta}$ is  $B'_{\delta'}=j'B'j'\simeq \Od_\alpha( Q\rtimes E_H(Q_\delta))$,

Let $A'_{\gamma'}=i'A'i'$ be the source algebra of $A'$,  where $i'\in (\Od C_H(Q))^P$. As above, $PC'=\mathcal{O}PC_H(Q)b_\delta$ is a nilpotent block. Let $V_{\gamma'}$ be the unique, up to isomorphism, simple $i'\mathcal{O}PC_H(Q)i'$-module, and let $S_{\gamma'}=\End_\Od(V_{\gamma'})$, which  is a Dade $P$-algebra.  The source algebra of $\Od PC_H(Q)b_{\delta}$ is isomorphic to $S_{\gamma'}\otimes \Od P$, and by Theorem \ref{t:extnilp} and \ref{n:Lpprime} we get:
\end{nim}

\begin{corollary} \label{c:cornormaldef} There is an $\tilde E$-graded Morita equivalence between  $A'_{\gamma'}$ and $\mathcal{O}_\alpha L$ induced by $V_{\gamma'}$. More precisely, there is an isomorphism
\[A'_{\gamma'}\simeq S_{\gamma'} \otimes \mathcal{O}_\alpha L\]
of $P$-interior $\tilde E$-graded algebras, where $L$ is defined in Section \ref{sec2}.
\end{corollary}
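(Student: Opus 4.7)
My plan is to deduce the corollary as a direct application of Theorem \ref{t:extnilp} to the \emph{inner} block extension $(QC_H(Q)\triangleleft N_G(Q_\delta),b_\delta)$, where $b_\delta$ is viewed as the $N_G(Q_\delta)$-invariant nilpotent block of $\mathcal{O}QC_H(Q)$ with defect group $Q$. Under this translation, $\tilde E=N_G(Q_\delta)/QC_H(Q)$ plays the role of the quotient $\bar G$; the defect pointed group of $b_\delta$ in $QC_H(Q)$ is $Q_{\delta'}$, and by Proposition \ref{propPdefectbdelta} (applied with $R=Q$) the defect pointed group in $N_G(Q_\delta)$ is $P_{\gamma'}$, with source idempotents $j'\in\delta'$ and $i'\in(\mathcal{O}C_H(Q))^P$, so that $A'_{\gamma'}=i'A'i'$ plays the role of $A_\gamma$. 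The nilpotent block $PC'=\mathcal{O}PC_H(Q)b_\delta$ plays the role of $BP$ in Theorem \ref{t:extnilp}, and $V_{\gamma'}$ together with $S_{\gamma'}=\End_\mathcal{O}(V_{\gamma'})$ are then exactly the $V_\gamma$ and $S_\gamma$ produced by that theorem for the inner extension.

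The key verification is that the pair $(L,\alpha)$ attached by the constructions of Section \ref{sec2} to the inner extension coincides with the $(L,\alpha)$ of the original block extension. The point is that $Z(Q)\le C_H(Q)$, which yields $C_{QC_H(Q)}(Q)=Z(Q)C_H(Q)=C_H(Q)$; consequently the ``$E$'' and ``$\tilde E$'' of the inner extension coincide with the original $E$ and $\tilde E$. The proof of the Proposition of Section \ref{sec2} then produces the same class in $H^2(N_G(Q_\delta)/C_H(Q),Z(Q))$ in both settings (it uses only the extensions $1\to Z(Q)\to R\to R/Z(Q)\to 1$ for $Q\le R\le P$ and the blocks $b_\delta$ of $\mathcal{O}RC_H(Q)$, all of which are intrinsic to $b_\delta$), so the group $L$ is the same. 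Similarly, the Clifford extension of the unique simple $\mathcal{O}QC_H(Q)b_\delta$-module defining $\alpha$ in \ref{d:cocycle-beta} is unchanged.

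Granting these identifications, Theorem \ref{t:extnilp} applied to the inner extension gives at once the $\tilde E$-graded Morita equivalence between $A'_{\gamma'}$ and $\mathcal{O}_\alpha L$ induced by $V_{\gamma'}$, and equivalently the isomorphism $A'_{\gamma'}\simeq S_{\gamma'}\otimes \mathcal{O}_\alpha L$ of $P$-interior $\tilde E$-graded algebras. Under the $p'$-hypothesis on $\bar G$ implicit in Subsection \ref{ssec4}, one may additionally invoke \ref{n:Lpprime} to recognise $L\simeq Q\rtimes_\theta \tilde E$. The only real work is the bookkeeping of the previous paragraph; once it is checked that every invariant built from $b_\delta$ and $N_G(Q_\delta)$ agrees whether regarded as ``attached to $(H\triangleleft G,b)$'' or ``attached to $(QC_H(Q)\triangleleft N_G(Q_\delta),b_\delta)$'', the corollary is immediate, with no new algebraic argument beyond Theorem \ref{t:extnilp}.
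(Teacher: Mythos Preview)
Your proposal is correct and follows essentially the same approach as the paper: the corollary is obtained by applying Theorem \ref{t:extnilp} to the inner block extension $(QC_H(Q)\trianglelefteq N_G(Q_\delta),\,b_\delta)$, together with \ref{n:Lpprime}. You are more explicit than the paper in verifying that the pair $(L,\alpha)$ attached to this inner extension coincides with the one defined from the original data---the paper leaves this bookkeeping implicit in the setup of Subsection \ref{ssec4}---but the underlying argument is the same.
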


\subsection{$p'$-extensions of inertial blocks} \label{sec6}  \medskip

In this subsection we assume that the block $B$ is inertial, and we prove the statements a) and c) of Theorem \ref{thmA}. By Puig \cite[2.16]{Pu2}, the assumption means that
\[B_\delta\simeq S\otimes B'_{\delta'},\]
where $S$ is a Dade $Q$-algebra, unique up to isomorphism. Statement 2) of the next theorem is due to Y.~Zhou \cite[Proposition 3.3]{Zhou1}. Here we give an alternative proof based on Lemma \ref{lemaDade*} (see also the Remark following \cite[Theorem]{Zhou1}).

\begin{thm} \label{l:invCgamma} Assume that the block $B$ is inertial.

{\rm 1)} The bimodule inducing the Morita equivalence between $B$ and $\mathcal{O}_\alpha(Q\rtimes \tilde E_H(Q_\delta))$ is $\bar G$-invariant.

{\rm 2)} If $p$ does not divide the order of $\bar G$,  then $A_\delta$ is an $\tilde E$-graded algebra with identity component $C_\delta$ Morita equivalent to $\mathcal{O}Q$, and there is an isomorphism
\[A_\delta\simeq S\otimes A'_{\delta'}\]
of $\tilde E$-graded algebras.

{\rm 3)} The Clifford extension of the unique simple $C_\delta$-module is isomorphic to the Clifford extension of $\bar V$.
\end{thm}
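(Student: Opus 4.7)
I plan to imitate the strategy of Theorem \ref{t:extnilp}, using Proposition \ref{p:tibi} to embed $A'_{\delta'}$ into $A_\delta$, refining the $\bar G$-grading to an $\tilde E$-grading via \ref{ss:grade-refine}, and then applying Lemma \ref{lemaDade*} to lift the resulting Morita equivalence.

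For Part 1, the hypothesis that $B$ is inertial gives, by \cite[2.16]{Pu2}, a $Q$-interior algebra isomorphism $B_\delta\simeq S\otimes B'_{\delta'}$ with $S$ a Dade $Q$-algebra, and K\"ulshammer \cite[Theorem A]{Ku} yields $B'_{\delta'}\simeq \Od_\alpha(Q\rtimes \tilde E_H(Q_\delta))$. I would build the bimodule from $B$ to $\Od_\alpha(Q\rtimes \tilde E_H(Q_\delta))$ as the composition of the source-algebra bimodule $Bj$, the Morita bimodule $V\otimes B'_{\delta'}$ (where $V$ is the simple $S$-module), and the bimodule corresponding to K\"ulshammer's isomorphism. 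Each factor is $\bar G$-invariant: $Bj$ because $b$ is $G$-invariant and the point $\delta$ is stabilized by $N_G(Q_\delta)$, so $j$ and $j^x$ are conjugate in $(B^Q)^\times$ for every $x\in N_G(Q_\delta)$; the factor through $S$ by the uniqueness (up to isomorphism) of the Dade $Q$-algebra attached to $B_\delta$, which forces the $\bar G$-action on $B_\delta$ inherited from the crossed product $A_\delta$ to restrict to inner $Q$-interior automorphisms of $S$; and K\"ulshammer's bimodule by $\tilde E$-equivariance of the structural isomorphism.

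For Part 2, assume $p\nmid |\bar G|$. Then $Q=P$, $\delta=\gamma$, and $N_G(Q_\delta)=N_G(P_\gamma)$, so the hypothesis of Proposition \ref{p:tibi} is vacuous and we obtain an injective unital $\bar G$-graded $P$-interior algebra embedding $f:A'_{\delta'}\hookrightarrow A_\delta$. Because $A'_{\delta'}$ is an $\tilde E$-graded crossed product by Corollary \ref{c:cornormaldef}, the image of $f$ furnishes a unit of every $\tilde E$-degree in $A_\delta$. Combined with the $\tilde E_H(Q_\delta)$-graded crossed product structure on $B_\delta$ coming from the inertial decomposition $B_\delta\simeq S\otimes B'_{\delta'}\simeq S\otimes \Od_\alpha(Q\rtimes \tilde E_H(Q_\delta))$ obtained in Part 1, the criterion \ref{ss:grade-refine} refines the $\bar G$-grading of $A_\delta$ to an $\tilde E$-grading whose identity component is $C_\delta\simeq S\otimes C'_{\delta'}\simeq S\otimes \Od Q$, Morita equivalent to $\Od Q$. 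I then apply Lemma \ref{lemaDade*} with $\tilde E$ in place of $G$, algebras $A_\delta$ and $A'_{\delta'}$, and bimodule $M:=V\otimes C'_{\delta'}$: conditions (1)--(3) are routine ($M$ induces a Morita equivalence, $\bar M\simeq \bar V$ is simple over $\bar C_\delta\simeq \bar S$, and $\End_{\Delta_1}(M)\simeq Z(\Od Q)$ is commutative), and condition (5) is vacuous since $\tilde E$ is a $p'$-group.

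The main obstacle is condition (4) of Lemma \ref{lemaDade*}, namely the identification $\End_{\bar A_\delta}(\bar A_\delta\otimes_{\bar C_\delta}\bar M)^{\mathrm{op}}\simeq \bar A'_{\delta'}$ of $\tilde E$-graded algebras. I will handle this by computing the Clifford extension of $\bar V$ in $\bar A_\delta$ through the embedding $f$, showing it is isomorphic to $k_\alpha\tilde E$ by an argument parallel to the last paragraph of the proof of Lemma \ref{l:Cliffordext}, using Remark \ref{r:conjQ} to match conjugation actions on $Q$. For Part 3, the resulting $\tilde E$-graded isomorphism $A_\delta\simeq S\otimes A'_{\delta'}$ restricts on identity components to $C_\delta\simeq S\otimes C'_{\delta'}$; the unique simple $C_\delta$-module decomposes as $\bar V_S\otimes \bar W$, where $\bar V_S$ is the simple $\bar S$-module and $\bar W$ corresponds via the source-algebra Morita equivalence $C'\sim C'_{\delta'}$ to the unique simple $C'$-module $\bar V$. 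Taking endomorphism rings over $\bar A_\delta\simeq \bar S\otimes \bar A'_{\delta'}$ factors out the trivial Clifford extension of $\bar V_S$ as a $\bar S$-module, leaving the Clifford extension of $\bar V$ in $\bar A'$, which by \ref{d:cocycle-beta} is $k_\alpha\tilde E$.
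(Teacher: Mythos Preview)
Your outline is correct and follows essentially the same route as the paper: the embedding $A'_{\delta'}\hookrightarrow A_\delta$ from Proposition~\ref{p:tibi}, the grading refinement via \ref{ss:grade-refine}, and the lifting via Lemma~\ref{lemaDade*} are exactly the paper's ingredients, and your treatment of Part~3 via preservation of Clifford extensions under graded Morita equivalence matches the paper's one-line argument. Two small points of alignment are worth noting. First, for Part~1 the paper does not argue abstractly from uniqueness of the Dade $Q$-algebra but quotes \cite[3.6]{Zhou1} directly to get $S$ (equivalently, the endopermutation module $W$) $N_G(Q_\delta)$-stable; your heuristic is the right intuition but you should make the citation explicit. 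Second, for condition~(4) the decisive input is Dade's theorem \cite{D80} that the residual Clifford extension of an $\tilde E$-invariant indecomposable endopermutation $\mathcal{O}Q$-module splits, combined with the fact that $\tilde E$ is a $p'$-group so the full Clifford extension splits and $W$ extends to $\mathcal{O}(Q\rtimes\tilde E)$; you invoke this only indirectly through the parallel with Lemma~\ref{l:Cliffordext}, and the reference to Remark~\ref{r:conjQ} is superfluous here since in the $p'$-case $L\simeq Q\rtimes\tilde E$ (see \ref{n:Lpprime}) and no compatibility of conjugation beyond the semidirect product is needed.
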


\begin{proof} 1) Let  $x\in N_G(Q_\delta)$. Then there is $b_x\in (B^Q)^\times$ such that $xjx^{-1}=b_xjb_x^{-1}$, hence $b_x^{-1}x$ commutes with $j$, and let $a_x:=(b_x^{-1}x)j\in N_{\mathrm{hU}(A_\delta)}(Q)$. In particular, $A_\delta$ is a $\bar G$-graded crossed product with $1$-component $B_\delta$.

Since $b_\delta$ is a block of $\mathcal{O}QC_H(Q)$ with defect group $Q$, we have that $A'$ is $\tilde E$-graded, so a similar argument gives homogeneous units $a'_x\in N_{\mathrm{hU}(A'_{\delta'})}(Q)$ such that $A'_{\delta'}$ is an $\tilde E$-graded crossed product, while $B'_{\delta'}$ is an $E_H(Q_\delta)$-graded crossed product, both with $1$-component $C'_{\delta'}:=j'\mathcal{O}QC_H(Q)j'\simeq \mathcal{O}Q$.

Recall from \cite[Proposition 4.2]{CoMaTo} that the correspondences $x\mapsto \overline{a_x}$ and $x\mapsto \overline{a'_x}$  induce isomorphisms
\begin{align}
N_G(Q_\delta)/C_H(Q) & \simeq  N_{\mathrm{hU}(jAj)}(Qj)/C_{(jBj)^\times}(Qj) \label{f:isofus1} \\
    &\simeq N_{\mathrm{hU}(j'A'j')}(Qj)/C_{(j'B'j')^\times}(Qj')  \label{f:isofus2}
\end{align}
between the group of $(G,\bar G)$-fusions of $Q_\delta$, the group of $(A,\bar G)$-fusions of $Q_\delta$, and the group of $(A',\tilde E)$-fusions of $Q_\delta$.

By our assumption, we have a Morita equivalence between $B_\delta$ and $B'_{\delta'}$ given by the functor
\[W \otimes_{\mathcal{O}}-:B'_{\delta'}\textrm{-mod}\to B_\delta\textrm{-mod},\]
where $W$ is an indecomposable endopermutation $\mathcal{O}Q$-module such that $S=\mathrm{End}_{\mathcal{O}}(W)$. Equivalently, the Morita equivalence is induced by the $E_H(Q_\delta)$-graded $(B_\delta,B'_{\delta'})$-bi\-module $W\otimes B'_{\delta'}$. Since $S$ can be chosen to be $N_G(Q_\delta)$-stable (see \cite[3.6]{Zhou1}), we may assume that $W$ is an $N_G(Q_\delta)$-invariant $\mathcal{O}Q$-module. Moreover, $W$ is the unique, up to isomorphism,  $\mathcal{O}$-simple $S$-module.

Let $\phi_x$ be the automorphism of $Q$ given by $x$-conjugation, and note that $a_x(uj)a_x^{-1}=x(uj)x^{-1}$ for all $u\in Q$. Moreover, by \cite[3.6]{Zhou1}, there is $s_x\in S^\times$ such that $s_xus_x^{-1}=\phi_x(u)\cdot 1_S$ for all $u\in Q$.  Conjugation with $a_x$ induces an isomorphism $B_\delta\simeq \mathrm{Res}_{\phi_x}(B_\delta)$ of $Q$-interior algebras, so $a_xs_x^{-1}$-conjugation induces a $Q$-interior algebra automorphism of $B_\delta$. We deduce that the $(B_\delta, B'_{\delta'})$-bimodules $W\otimes B'_{\delta'}$ and $a_x\otimes(W\otimes B'_{\delta'})\otimes {a'_x}^{-1}$ are isomorphic, since $a'_x\otimes B'_{\delta'})\otimes {a'_x}^{-1}\simeq B'_{\delta'}$ as $(B'_{\delta'},B'_{\delta'})$-bimodules, and $a_x\otimes W\otimes {a'_x}^{-1}\simeq W$ as $\mathcal{O}Q$-modules.

2) Observe that $W\otimes B'_{\delta'}$ is an $E_H(Q_\delta)$-graded right $B'_{\delta'}$-module, hence the isomorphism $B_\delta\simeq \mathrm{End}_{(j'B'j')^{\mathrm{op}}}(W\otimes B'_{\delta'})$ defines an $E_H(Q_\delta)$-grading on $B_\delta$, with $1$-component denoted by $C_\delta\simeq S\otimes \mathcal{O}Q$. We may regard $S$ as a subalgebra of $C_\delta$, and $W$ is, up to isomorphism,  the unique $\mathcal{O}$-simple $C_\delta$-module. To show that the $\bar G$-grading of $A_\delta$ can be refined to an $\tilde E$-grading, we verify the conditions \ref{ss:grade-refine} (1), (2) and (3) for $A_\delta$, $B_\delta$ and $C_\delta$.

By Proposition  \label{p:tibi} there is a unital injective homomorphism $A'_{\delta'}\to A_\delta$ which is compatible with the isomorphisms  (\ref{f:isofus1})  and (\ref{f:isofus2}) (see \cite[Proposition 7.4]{Pu2m})   This implies that the condition \ref{ss:grade-refine} (1) holds. Note also that since $A'_{\delta'}\simeq\mathcal{O}_\alpha(Q\rtimes \tilde E)$, a different choice of $a'_x$ means multiplication by an element from  $\mathcal{O}^\times$.

The $\mathcal{O}Q$-module $s_xa_x^{-1}\otimes W$ is isomorphic to $W$. Therefore, the right  $B'_{\delta'}$-module $s_xa_x^{-1}\otimes W\otimes B'_{\delta'}$ is actually $E_H(Q_\delta)$-graded, and still induces a Morita equivalence between $B_\delta$ and $B'_{\delta'}$, such that $\mathrm{End}_{{(j'B'j')^{\mathrm{op}}}}(s_xa_x^{-1}\otimes W\otimes B'_{\delta'})$ identifies with $a_xs_x^{-1}B_\delta s_xa_x^{-1}$, with the same $1$-component as $B_\delta$. In particular, we deduce that $a_xC_\delta a_x^{-1}=C_\delta$, so condition \ref{ss:grade-refine} (2) holds.

Finally,  condition \ref{ss:grade-refine} (3) holds as well, because $a'_xa'_y{a'}_{xy}^{-1}\in {\mathcal O}^\times$ for all $x,y\in N_G(Q_\delta)$.

We now consider the $\tilde E$-graded algebras $A_\delta$ with $1$-component $C_\delta$, $A'_{\delta'}$ with $1$-component $C'_{\delta'}=\Od Q$, and the diagonal subalgebra $\Delta:=\Delta(A_\delta\otimes {A'}_{\delta'}^{\textrm{op}})$ with $1$-component $\Delta_1=C_\delta\otimes (\Od Q)^{\textrm{op}}$. Let $M=W\otimes \mathcal{O}Q$. By the same argument as in 1) above, $M$ is an $\tilde E$-invariant $(C_\delta,\mathcal{O}Q)$-bimodule (that is, $\Delta_x\otimes_{\Delta_1}M\simeq M$ as $\Delta_1$-modules for all $x\in \tilde E$), because $\Od Q$ is $\tilde E$-invariant and $W$ is an $\tilde E$-invariant $\Od Q$-module.

We are going to verify the conditions of Lemma \ref{lemaDade*}. By assumption, $M$ induces a Morita equivalence between $C_\delta$ and $\Od Q$, so condition (1) holds, while condition (5) is trivially true, since $\tilde E$ is a $p'$-group. For condition (2), note that $C_\delta/J(C_\delta)\simeq S$, $\bar\Delta_1=\Delta_1/J(\Delta_1)\simeq \bar S=k\otimes_{\Od}S$ and $\bar M=M/J(\Delta_1)M\simeq\bar W$, so $\bar M$ is a simple $\bar\Delta_1$-module. Since $C_\delta \simeq S\otimes \mathcal{O}Q$, we have that
\[\mathcal{D}_1=\End_{\Delta_1}(M)^\mathrm{op}=\End_{C_\delta\otimes (\mathcal{O}Q)^\mathrm{op}}(W\otimes \mathcal{O}Q)^\mathrm{op} \simeq Z(\mathcal{O}Q)\]
is commutative, so condition (3) also holds.

Finally, for condition (4), observe that $\bar A_\delta=A_\delta/J_{\textrm{gr}} (A_\delta)$ is a crossed product of $\bar S$ and $\tilde E$, while $\bar {A}'_{\delta'}=A'_{\delta'}/J_{\textrm{gr}} (A'_{\delta'})\simeq k_\alpha \tilde E$, where $\alpha\in Z^2(\tilde E,k^\times)$ is defined in \ref{d:cocycle-beta}. By Proposition \ref{p:tibi} there are injective maps  $A'_{\delta'}\to A_\delta$  and $\bar {A}'_{\delta'}\to \bar A_\delta$ of $\tilde E$-graded algebras, hence $\tilde E$ acts on $\bar S$, and $\bar A_\delta$ is a crossed product of the form
\[\bar A_\delta \simeq \bar S\otimes k_\alpha \tilde E.\] To prove that
\[\End_{\bar A_\delta}(\bar A_\delta\otimes _{\bar S}\bar W)^{\mathrm{op}}\simeq k_\alpha \tilde E,\] it is enough to show that the simple $\bar S$-module $\bar W$ extends to the diagonal subalgebra $\Delta(\bar A_\delta\otimes (\bar {A}'_{\delta'})^\mathrm{op})$. Observe that
\[\Delta(\bar A_\delta\otimes (\bar {A}'_{\delta'})^\mathrm{op})\simeq  \Delta((\bar S\otimes k_\alpha \tilde E) \otimes (k_\alpha \tilde E)^\mathrm{op})\simeq \bar S\otimes k\tilde E,\]
hence it is enough to show that $\bar W$ has a structure of a $k\tilde E$-module. We now use the fact that $W$ is an $\tilde E$-invariant indecomposable endopermutation $\mathcal{O}Q$-module, where we regard $Q$ as a normal subgroup of $Q\rtimes \tilde E$. By Dade's theorem \cite[(12)]{D80}, the residual Clifford extension of $W$ splits, and since $\tilde E$ is a $p'$-group, the Clifford extension of $W$ also splits, hence the $\mathcal{O}Q$-module structure of $W$ extends to an $\mathcal{O}(Q\rtimes \tilde E)$-module structure (see \cite[Theorem 6.7]{D84}). In particular, $W$ is an $\mathcal{O}\tilde E$-module and hence $\bar W$ is an $k\tilde E$-module.

3) Under the $\tilde E$-graded Morita between $A_\delta$ and $A'$, the unique simple $C_\delta$-module $\bar W$ corresponds to the unique simple $C'$-module $\bar V$. The statement follows from the fact that group graded Morita equivalences preserve Clifford extensions (see \cite[Theorem 5.1.18]{Ma}).
\end{proof}



\section{Extended local categories}  \label{s:fusions}

In this section we introduce fusions in the general context of $\bar{G}$-graded $G$-interior algebras, and study their properties. We generalize here some notions and results from \cite{CoMaTo}, where only automorphisms of $P$-groups are considered.

\begin{nim} \label{subsec81}  Recall that there is an extended local category denoted $\mathcal{E}_{(b,H,G)}$ introduced by Puig and Zhou in \cite{PuZhIII}. The objects are the local pointed subgroups included in $P_{\gamma}$ and the set of morphisms $\Hom_{\mathcal{E}_{(b,H,G)}}(R_{\epsilon}$, $T_{\rho})$ from  $R_{\epsilon}$ to $T_{\rho}$ (here $T,R\leq P$ and $\rho\subseteq B^T, \epsilon\subseteq B^R$ are local points) is formed by  the pairs
$(c_x,\bar{x})$ with $\bar{x}\in\bar{G}$, such that
\[c_x:R\rightarrow T\quad c_x(u)={}^xu\]
for any $u\in R$; here  ${}^x R_{\epsilon} \leq T_{\rho}$, with $\epsilon$ and $\rho$ local points such that $T_{\rho}\leq P_{\gamma}$ and $R_{\epsilon}\leq P_{\gamma}$. If $R=T$ the automorphism group is
\[\Aut_{\mathcal{E}_{(b,H,G)}}(R_{\epsilon}):=E_G^{\bar{G}}(R_{\epsilon})\simeq N_G(R_{\epsilon})/C_H(R).\]
For brevity, we denote this category by $\mathcal{E}$, and if $P'_{\gamma'}$ is some defect pointed group in $G'_{\{b'\}}$ on $B'$ (which is a  $G'$-algebra), we have a similar extended local category $\mathcal{E}'$.
\end{nim}

\begin{nim} \label{subsec83} We introduce $(A,\bar{G})$-fusions and $(G,\bar{G})$-fusions in a more general context.  Let $\bar{G}=G/H$, let $A$ be a $\bar{G}$-graded $G$-interior algebra such that $B:=A_1$ is an $H$-interior $G$-algebra. Let $P$ be a $p$-subgroup of $G$, and let $P_{\gamma}$ be a local pointed group on the  $G$-algebra $B$ with $i\in\gamma$. We will assume that the structural map $P\rightarrow A^{\times}$ is injective. In particular, $Ai$ becomes a $\bar{G}$-graded $(A,\Od P)$-bimodule, with $(\End_A(Ai))^{\mathrm{op}}\simeq iAi$ as $\bar{G}$-graded $P$-interior algebras.

For two pointed subgroups  $R_{\epsilon}\leq P_{\gamma}$ and $T_{\rho}\leq P_{\gamma}$ we denote
\begin{align*}
N_P(R,T)  &=   \{u\in P\mid {}^uR\leq T\}   \\
N_G(R_{\epsilon},T_{\rho})  &=  \{g\in G \mid  {\ }^gR_{\epsilon}\leq T_{\rho}\} \\
&= \{g\in N_G(R,T) \mid \text{ for } l\in\rho \text{ there is } {\ }^gj\in{\ }^g\epsilon~~  \\  &\ \hspace{45mm} \text{ such that }  {\ }^gj=l\cdot {}^g j={}^gj\cdot l\}.
\end{align*}
For $i\in\gamma$ choose $j\in\epsilon$ and $l\in\rho$ such that
\[j=i j=j i, \qquad l=i l=l i.\]
In particular, to $R_{\epsilon}$ corresponds the $\bar{G}$-graded $(A,\Od R)$-bimodule $Aj$, while to $T_{\rho}$ corresponds the $\bar{G}$-graded $(A,\Od T)$-bimodule $Al$. Denote by $\Inj(R,T)$ the set of injective group homomorphisms from $R$ to $T$.
\end{nim}

\begin{definition} a) The set of $\bar{G}$-{\it fusions} from $R$ to $T$ is
\[\Hom^{\bar{G}}(R,T)=\{(\varphi,\bar{g})\mid \varphi\in \Inj(R,T), \ \bar{g}\in\bar{G}, \overline{\varphi(u)}={}^{\bar{g}}\bar{u}, \ \forall u\in R\}.\]

b) The set of {\it interior} $\bar{G}$-{\it fusions} from $R$ to $T$ is
\[\Int^{\bar{G}}(R,T)=\{(c_v,\bar{g})\mid v\in N_P(R,T),\ \bar{g}\in\bar{G}, \ \overline{c_v(u)}={}^{\bar{g}}\bar{u}, \ \forall u\in R\}\]
\end{definition}

\begin{remark} a) We have the inclusions
\[\Int^{\bar{G}}(R,T)\subseteq \Hom^{\bar{G}}(R,T)\subseteq \Inj(R,T)\times \bar{G}.\]
If $(\varphi,\bar{g})\in \Hom^{\bar{G}}(R,T)$, then $\bar{g}\in N_{\bar{G}}(\bar{R},\bar{T})$.

b) If $R=T=P$, then $\Hom^{\bar{G}}(P,P)=\Aut^{\bar{G}}(P)$, and $\Int^{\bar{G}}(P,P)=\Int^{\bar{G}}(P)$, see \cite[Definition 3.2]{CoMaTo}.
\end{remark}

\begin{definition} With the above notations, assume $R\simeq T$. We define the set
\[N_{\hU(A)}(Rj, Tl):=\{a\in A^{\times}\cap A_{\bar{g}}\mid \bar{g}\in \bar{G},\  a(Rj)a^{-1}=Tl\}.\]
If $Rj=Tl=Pi$, then $N_{\hU(A)}(Pi)$ is a group, defined in \cite{CoMaTo}.
\end{definition}

\begin{nim} \label{ss:shift} Before giving the next definition, we recall the change of gradings construction introduced in \cite[2.2]{CoMaTo} (and used intensively in \cite{CoMaTo}) in the particular case of the $\bar G$-graded $(A,\mathcal{O}T)$ bimodule $Al$. Note that here we regard $\mathcal{O}T$ as a $\bar G$-graded subalgebra of $\mathcal{O}G$, with components  $(\mathcal{O}T)_{\bar x}=\mathcal{O}(\bar x\cap T)$ for $\bar x\in TH/H$, and $(\mathcal{O}T)_{\bar x}=0$ for $\bar x\notin TH/H$.

For $\bar g\in \bar G$, $(\mathcal{O}T)^{\bar g^{-1}}$ is the $\bar G$-graded algebra with components $((\mathcal{O}T)^{\bar g^{-1}}_{\bar x} = (\mathcal{O}T)_{\bar g\bar x\bar g^{-1}}$ for all $\bar x\in \bar G$. Then, $(Al)(\bar g^{-1})$ is the $\bar G$-graded $A, (\mathcal{O}T)^{\bar g^{-1}})$-bimodule with components $(Al)(\bar g^{-1})_{\bar x}=(Al)_{\bar x\bar g^{-1}}$ for all $\bar x\in \bar G$.

Now, if $(\varphi, \bar g)\in \Hom^{\bar G}(R,T)$, then $\varphi$ induces a homomorphism $\mathcal{O}R\to (\mathcal{O} T)^{\bar g^{-1}}$ of $\bar G$-graded algebras, hence, by restriction of scalars, $(Al)(\bar g^{-1})$ becomes a $\bar G$-graded $(A,\mathcal{O}R)$-bimodule, which is denoted by $(Al)(\bar g^{-1})_\varphi$.
\end{nim}

\begin{definition}\label{defn1}
a) The set of $(A,\bar{G})$-fusions from $R_{\epsilon}$ to $T_{\rho}$ is
\begin{align*}
F_A^{\bar{G}}(R_{\epsilon}, T_{\rho})&=\{(\varphi, \bar{g})\in\Hom^{\bar{G}}(R,T)\mid Aj \text{ is a summand of } (Al)(\bar{g}^{-1})_{\varphi} \\
    &\ \hspace{20mm} \text{ as } \bar{G}\text{-graded } (A,\Od R)\text{-bimodules}\}.
\end{align*}

b) The set of $(G,\bar{G})$-fusions  from $R_{\epsilon}$ to $T_{\rho}$ is
\[E_G^{\bar{G}}(R_{\epsilon}, T_{\rho})=\{(c_x,\bar{x})\mid x\in N_G(R_{\epsilon}, T_{\rho}),\ \overline{c_x(u)}={}^{\bar{x}}\bar{u},\ \forall u\in R\}.\]
Note that in the case of a block extension $A=\Od Gb$, this set is actually $\Hom_{\mathcal{E}}(R_{\epsilon}, T_{\rho})$.
\end{definition}

\begin{remark} If $R_{\epsilon}=T_{\rho}=P_{\gamma}$ then $F_A^{\bar{G}}(P_{\gamma}, P_{\gamma})=F_A^{\bar{G}}(P_{\gamma})$ and
\[E_G^{\bar{G}}(P_{\gamma},P_{\gamma})=E_G^{\bar{G}}(P_{\gamma})\simeq N_G(P_{\gamma})/C_H(P),\] see
\cite[Definition 3.3]{CoMaTo}.
\end{remark}

\begin{lemma}\label{lem45} With the above assumptions, if $R\simeq T$, then there is a surjective map
\[\Phi:N_{\hU(A)}(Rj, Tl)\rightarrow F_A^{\bar{G}}(R_{\epsilon}, T_{\rho}),\]
which induces a bijective map
\[\overline{\Phi}:N_{\hU(A)}(Rj, Tl)/\operatorname{ker} \Phi\rightarrow F_A^{\bar{G}}(R_{\epsilon}, T_{\rho}).\]
\end{lemma}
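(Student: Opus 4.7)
The plan is to define $\Phi(a) := (\varphi_a, \bar g)$ for $a \in A^\times \cap A_{\bar g}$, with $\varphi_a : R \to T$ characterized by $a(uj)a^{-1} = \varphi_a(u) l$; uniqueness of $\varphi_a(u)$ follows from the assumption $a(Rj)a^{-1} = Tl$ together with injectivity of the structural maps, and comparing degrees in $A$ yields $\overline{\varphi_a(u)} = {}^{\bar g}\bar u$, so $(\varphi_a, \bar g) \in \Hom^{\bar G}(R, T)$. A short calculation confirms that $\varphi_a$ is an injective group homomorphism. To verify $(\varphi_a, \bar g) \in F_A^{\bar G}(R_\epsilon, T_\rho)$, I would exhibit the $\bar G$-graded $(A, \Od R)$-bimodule isomorphism $Aj \to (Al)(\bar g^{-1})_{\varphi_a}$, $y \mapsto ya^{-1}$; well-definedness uses $ja^{-1} = a^{-1}l$ (from $aja^{-1} = l$), the grading is preserved since $a^{-1} \in A_{\bar g^{-1}}$, and the twisted right $\Od R$-linearity follows from the identity $ua^{-1}l = a^{-1}\varphi_a(u)l$, which is a direct consequence of the defining relation for $\varphi_a$.

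For surjectivity, the key observation is that both $Aj$ and $(Al)(\bar g^{-1})_\varphi$ are $\bar G$-graded indecomposable $(A, \Od R)$-bimodules: their graded endomorphism algebras have $1$-components $(jBj)^R = jB^Rj$ and $(lBl)^{\varphi(R)}$, which are both local, because $j$ and $l$ are primitive in $B^R$ and $B^T$ respectively, and the hypothesis $R \simeq T$ forces the injective homomorphism $\varphi$ to be bijective, hence $\varphi(R) = T$. The summand condition therefore upgrades to an isomorphism of $\bar G$-graded $(A, \Od R)$-bimodules, from which one extracts dual elements $\alpha \in jAl \cap A_{\bar g^{-1}}$ and $\beta \in lAj \cap A_{\bar g}$ satisfying $\alpha\beta = j$, $\beta\alpha = l$, together with the twisting relations $u\alpha = \alpha\varphi(u)$ and $\beta u = \varphi(u)\beta$ for all $u \in R$.

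The main obstacle is to promote $\beta$ to a homogeneous unit of $A$. Since $A = \Od Gb$ is a crossed product, we have $A \cong A(\bar g^{-1})$ as $\bar G$-graded left $A$-modules. I would apply $\bar G$-graded Krull--Schmidt cancellation, valid for these finitely generated $\Od$-modules, to the decompositions $A = Aj \oplus A(1-j)$ and $A(\bar g^{-1}) = Al(\bar g^{-1}) \oplus A(1-l)(\bar g^{-1})$, combined with the isomorphism $Aj \cong Al(\bar g^{-1})$ obtained by forgetting the right $\Od R$-structure; this yields $A(1-j) \cong A(1-l)(\bar g^{-1})$ as $\bar G$-graded left $A$-modules, and hence dual elements $\alpha' \in (1-j)A(1-l) \cap A_{\bar g^{-1}}$, $\beta' \in (1-l)A(1-j) \cap A_{\bar g}$ with $\alpha'\beta' = 1-j$, $\beta'\alpha' = 1-l$. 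Setting $a := \beta + \beta'$, orthogonality of the Peirce components gives $a(\alpha + \alpha') = l + (1-l) = 1$ and $(\alpha + \alpha')a = 1$, so $a \in A^\times \cap A_{\bar g}$. One then checks $aj = \beta = la$ and $a(uj) = \beta u = \varphi(u)\beta = \varphi(u)aj$, confirming $a \in N_{\hU(A)}(Rj, Tl)$ and $\Phi(a) = (\varphi, \bar g)$.

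Finally, for the induced bijection $\bar\Phi$: if $\Phi(a) = \Phi(a')$, both elements have the same degree $\bar g$, so $c := a'^{-1}a \in B^\times$, and the requirement $\varphi_a = \varphi_{a'}$ translates to $c(uj)c^{-1} = uj$ for all $u \in R$, i.e., $c \in C_{B^\times}(Rj)$; conversely, right multiplication of $a$ by any $c \in C_{B^\times}(Rj)$ preserves $\Phi(a)$. Thus $\ker\Phi = C_{B^\times}(Rj)$ acts on $N_{\hU(A)}(Rj, Tl)$ by right multiplication, and $\bar\Phi$ is a bijection.
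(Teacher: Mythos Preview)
Your proof is correct and follows essentially the same strategy as the paper: define $\Phi$ via conjugation, exhibit the graded bimodule isomorphism $Aj\to (Al)(\bar g^{-1})_{\varphi_a}$ by right multiplication, and for surjectivity extend the given bimodule isomorphism to an automorphism of $A$ via Krull--Schmidt in order to extract a unit. Your treatment is in fact more careful than the paper's on two points: you explicitly use graded Krull--Schmidt on the complements $A(1-j)\simeq A(1-l)(\bar g^{-1})$ to guarantee that the assembled unit $a=\beta+\beta'$ is homogeneous (the paper extends $f$ to some $\bar f\in\Aut_A(A)$ as a plain left $A$-module and then asserts homogeneity of the resulting $a$ rather quickly), and you identify the fibres of $\Phi$ concretely as the orbits under right multiplication by $C_{B^\times}(Rj)$, which the paper leaves as the abstract equivalence relation induced by $\Phi$.
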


\begin{proof} Note that here $\operatorname{ker}\Phi$ is the equivalence relation induced by $\Phi$, so
\[N_{\hU(A)}(Rj, Tl)/ \operatorname{ker}\Phi:=\]\[\left\{\{a'\in N_{\hU(A)}(Rj, Tl)\mid \Phi(a')=\Phi(a)\}\mid a\in N_{\hU(A)}(Rj, Tl)\right\}.\]
The proof is an adaptation of \cite[Proposition 3.5]{CoMaTo}. Let $\Phi$ be defined by
\[\Phi(a)=(\varphi_a, \bar{g})\]
for any $a\in A^{\times}\cap A_{\bar{g}}$ such that $ a(Rj)a^{-1}=Tl$, where $\varphi_a:R\rightarrow T$ is a bijection  given by the conjugation with $a$. 
To see why we can do this,  note that we have assumed that the restriction $P\to A^\times$ of the structural map of $A$ is injective. Since $R\le P$ and $j\in A^R$, we may regard $jAj$ as an $R$-interior algebra, so we have a group homomorphism $R\to (jAj)^\times$.  The image of this last homomorphism is $Rj=jR=jRj$, and consequently, we obtain a surjective group homomorphism $R\to Rj$. Since $Aj$ is a projective right  $\mathcal{O}R$-module, it is also a free $\mathcal{O}R$-module, which implies that the homomorphism $R\to Rj$ is also injective. A similar argument holds for $Tl$, and by assuming that $T\simeq R$, we get  $R\simeq Rj\simeq Tl\simeq T$ as subgroups in $A^\times$. Then it is also clear that $Aj$ is isomorphic to $(Al)(\bar{g}^{-1})_{\varphi_a}$ as  $\bar{G}$-graded $(A,\Od R)$-bimodules, hence $\Phi$ is a well-defined map.

Next we verify  that $\Phi$ is surjective. For this, let $(\varphi,\bar{g})\in F_A^{\bar{G}}(R_{\epsilon}, T_{\rho})$ such that there is an isomorphism $f:Aj\rightarrow (Al)(\bar{g}^{-1})_{\varphi}$  of $\bar{G}$-graded $(A,\Od R)$-bimodules.  Since $Aj$ and $(Al)(\bar{g}^{-1})_{\varphi}$ are  direct summands of $A$ as left $A$-modules,  it follows by the Krull-Schmidt theorem that there is $\overline{f}\in\Aut_A(A)$ such that $\overline{f}|_{Aj}=f$. Then there is $a\in A^{\times}$ such that $\overline{f}(b)=ba$ for any $b\in A$. In particular,
\[f:Aj\rightarrow (Aj)(\bar{g}^{-1})_{\varphi}\quad f(bj)=bja\]
for any $b\in A$. Since $f$ is a homomorphism of $\bar{G}$-graded $(A,\Od R)$-bimodules, we obtain that $a$ is an homogeneous unit; that is,  $a\in A^{\times}\cap A_{\bar{g}}$ for some $\bar g\in \bar G$. We obtain that
$aRja^{-1}=Tl$ and $\Phi(a)=(\varphi_a,\bar{g})=(\varphi, \bar{g}).$
\end{proof}

We return to the case of the block extension $A=b\Od G$.

\begin{proposition} \label{prop44} 
Let $T_{\rho}\leq P_{\gamma}$ and $R_{\epsilon}\leq P_{\gamma}$ be  local pointed groups on $B=\Od Hb$ such that $R\simeq T$. Then there is a bijection
\[E_G^{\bar{G}}(R_{\epsilon}, T_{\rho})\rightarrow F_A^{\bar{G}}(R_{\epsilon}, T_{\rho}).\]
\end{proposition}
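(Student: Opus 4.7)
The plan is to take $\Psi$ to be the natural set-theoretic inclusion $(c_x,\bar x)\mapsto (c_x,\bar x)$; both sides already sit inside $\Hom^{\bar G}(R,T)$, so injectivity is automatic, and the two real tasks are to show $\Psi$ lands in $F_A^{\bar G}(R_\epsilon,T_\rho)$ and that it is surjective. For well-definedness I will invoke Lemma \ref{lem45} on the homogeneous unit $xb\in A^\times\cap A_{\bar x}$ attached to $x\in N_G(R_\epsilon,T_\rho)$. The computation $(xb)(Rj)(xb)^{-1}=({}^xR)({}^xj)$ needs to be matched to $Tl$: the hypothesis $R\simeq T$ upgrades ${}^xR\le T$ to ${}^xR=T$ by an order comparison, while the defining relation ${}^xj=l\cdot{}^xj={}^xj\cdot l$, combined with primitivity of $l$ in $B^T$, forces ${}^xj=l$ via the orthogonal decomposition $l={}^xj+(l-{}^xj)$ inside $B^T$. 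Lemma \ref{lem45} then gives $\Phi(xb)=(c_x,\bar x)\in F_A^{\bar G}(R_\epsilon,T_\rho)$.

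For surjectivity I will start from $(\varphi,\bar g)\in F_A^{\bar G}(R_\epsilon,T_\rho)$, use Lemma \ref{lem45} to pick $a\in A^\times\cap A_{\bar g}$ with $a(Rj)a^{-1}=Tl$ and $a(uj)a^{-1}=\varphi(u)l$, and exploit the crossed-product structure of $A=\Od Gb$ to write $a=g_0c$ with $g_0\in G$ a chosen representative of $\bar g$ and $c\in B^\times$ (the invertibility of $c$ follows by writing $a^{-1}=dg_0^{-1}$ and using $G$-invariance of $b$). The identity $aja^{-1}=l$ rewrites as $cjc^{-1}={}^{g_0^{-1}}l=:l'$, a primitive local idempotent of $B^{T'}$ with $T':={}^{g_0^{-1}}T$, and performing the same manipulation for every $u\in R$ yields a group isomorphism $\psi:R\to T'$ satisfying $cuc^{-1}\cdot l'=\psi(u)l'$ and $\varphi=\mathrm{Ad}(g_0)\circ\psi$. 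Thus $c$ realizes a $B^\times$-fusion between the local pointed groups $R_\epsilon$ and $T'_{\rho'}$ on $B$, where $\rho'$ is the local point containing $l'$.

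The hard part will be the final lifting: promoting this $B^\times$-fusion to an $H$-fusion on local pointed groups of the block algebra $B=\Od Hb$. This relies on the standard Puig-type principle that every fusion between local pointed groups induced by inner automorphisms of $B^\times$ is already realized by $H$-conjugation, in the spirit of the source-algebra argument behind \cite[Proposition 4.2]{CoMaTo}. Once $h\in N_H(R_\epsilon,T'_{\rho'})$ with $c_h=\psi$ is produced---note that the very definition of $N_H(R_\epsilon,T'_{\rho'})$ together with primitivity of $l'$ automatically forces $hjh^{-1}=l'$---I set $x:=g_0h$ and check $\bar x=\bar g$, ${}^xR=T$, ${}^xj=g_0l'g_0^{-1}=l$, so that $x\in N_G(R_\epsilon,T_\rho)$, and $c_x=\mathrm{Ad}(g_0)\circ\psi=\varphi$, giving $\Psi(c_x,\bar x)=(\varphi,\bar g)$. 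This fusion-lifting step is the main obstacle, since it is where the specific block-theoretic content of $B$ enters the argument, beyond its being a crossed product over $\bar G$.
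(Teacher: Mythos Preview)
Your map $\Psi$ coincides with the composite $\bar\Phi\circ\Theta$ that the paper constructs, so the overall strategy is sound; however, your well-definedness step contains a genuine gap. The relation ${}^xj=l\cdot{}^xj={}^xj\cdot l$ appearing in the definition of $N_G(R_\epsilon,T_\rho)$ is an existential condition: it holds for \emph{some} representative of the point ${}^x\epsilon$, not for the specific idempotent ${}^xj$ attached to the $j$ fixed in \ref{subsec83}. What you actually know, once $R\simeq T$ upgrades ${}^xR_\epsilon\le T_\rho$ to ${}^xR_\epsilon=T_\rho$, is only that ${}^xj$ and $l$ lie in the same local point $\rho$ of $B^T$; two primitive idempotents in the same point are $(B^T)^\times$-conjugate, not equal, so your orthogonal-decomposition argument does not apply and $xb\notin N_{\hU(A)}(Rj,Tl)$ in general. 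The fix is exactly what the paper does: pick $b_1\in(B^T)^\times$ with $b_1lb_1^{-1}={}^xj$ and use $a=b_1^{-1}x$ instead; then $a(Rj)a^{-1}=Tl$, and since $b_1$ centralises $T$ and lies in $A_1$ one still has $\Phi(a)=(c_x,\bar x)$.

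Your surjectivity argument, by contrast, is correct in outline but takes a genuinely different route from the paper. You factor $a=g_0c$ to reduce to a $B^\times$-fusion between local pointed groups $R_\epsilon$ and $T'_{\rho'}$ on $B=\mathcal{O}Hb$, and then invoke the classical (ungraded) fact that such fusions are realised by $H$-conjugation. The paper instead argues directly in the graded setting, mimicking \cite[7.2, 7.3]{L}: from $Aj\simeq (Al)(\bar g^{-1})_\varphi$ it passes to $jAj\simeq (jAl)(\bar g^{-1})_\varphi$, uses $\Br_R(j)\neq 0$ to find an $\mathcal{O}R$-summand inside $jAj$, and then reads off the required $x\in G$ from the double-coset decomposition of $jAl$ as an $(\mathcal{O}R,\mathcal{O}T)$-bimodule. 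Your reduction is cleaner conceptually and explains why the graded statement is no harder than the ungraded one, while the paper's argument is more self-contained since it does not appeal to an external fusion-lifting principle. If you keep your approach, you should either give a precise reference for the $B^\times$-to-$H$ lifting (e.g.\ the ungraded case of \cite[Proposition~2.14]{PuLof} combined with the $p$-permutation basis argument), or supply the short Brauer-map proof yourself.
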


\begin{proof} We will show the existence of two  bijections
\[E_G^{\bar{G}}(R_{\epsilon}, T_{\rho})\rightarrow N_{\hU(A)}(Rj,Tl)/\operatorname{ker} \Phi\rightarrow F_A^{\bar{G}}(R_{\epsilon}, T_{\rho}).\]
The second bijection exists since our algebra satisfies the hypotheses of Lemma \ref{lem45}.

For the first bijection let $(\varphi_g,\bar{g})\in E_G^{\bar{G}}(R_{\epsilon}, T_{\rho})$. Since $R\simeq T$, we obtain that ${}^gR_{\epsilon}=T_{\rho}$, hence ${}^g\epsilon=\rho$ and ${}^gR=T$. It follows that for $j\in \epsilon$ and $l\in \delta$, there is $b_1\in (B^T)^{\times}$ such that $gjg^{-1}=b_1lb_1^{-1}$, hence $l=b_1^{-1}gjg^{-1}b_1$. We denote
\begin{equation}\label{eq*}
a=b_1^{-1}g
\end{equation}
which is a homogeneous invertible element. Then
\[aRja^{-1}=b_1^{-1}gRjg^{-1}b_1=b_1^{-1}({}^gR)({}^gj)b_1=b_1^{-1}T( {}^gj)b_1=Tl.\]
Let
\[\Theta: E_G^{\bar{G}}(R_{\epsilon}, T_{\rho})\rightarrow N_{\hU(A)}(Rj,Tl)/ \operatorname{ker}  \Phi, \quad \Theta(\varphi_g,\bar{g})=[a]_{\operatorname{ker}  \Phi},\]
where $a$ is obtained in (\ref{eq*}) and $\Phi$ is the map from Lemma \ref{lem45}; here $[a]_{\operatorname{ker} \Phi}$ is the set
\[\{a'\in N_{\hU(A)}(Rj,Tl)\mid \Phi(a)=\Phi(a')\}.\]
Our definition of $\Theta$ does not depend on the choice of $b_1$, and the injectivity of $\Theta$ follows by straightforward verification.

To show that $\Theta$ is surjective, let $\bar{a}\in N_{\hU(A)}(Rj, Tl)/\Ker \Phi $.
By Lemma \ref{lem45} we have $(\varphi_{a_g},\bar{g})\in F_{A}^{\bar{G}}(R_{\epsilon}, T_{\rho})$, and denote by $\varphi$ the map $\varphi_{a_g}$. By Definition \ref{defn1}, since $\varphi$ is an isomorphism, we obtain
\[Aj\simeq (Al)(\bar{g}^{-1})_{\varphi}\] as $\bar{G}$-graded $(A,\Od R)$-bimodules. Next we  mimic  the proof of  \cite[7.2, 7.3]{L} in our graded context. Note that
\[jAj\simeq (jAl)(\bar{g}^{-1})_{\varphi}\]
as $\bar{G}$-graded $(\Od R,\Od R)$-bimodules. Since $\Br_R(j)\neq 0$ and $A=\Od Gb$, it follows that $jAj$ has a direct summand isomorphic to $\Od R$ as $(\Od R,\Od R)$-bimodules, hence $jAl$ has a direct summand isomorphic to $(\Od R)(\bar{g})_{\varphi^{-1}}$ as $\bar{G}$-graded $(\Od R,\Od T)$-bimodules. Thus, in particular,
\[(\Od R)(\bar{g})_{\varphi^{-1}}\simeq \Od [Rx^{-1}] \simeq \Od[x^{-1}T]\]
for some $x\in G$ such that $\varphi(u)=xux^{-1}$ for any $u\in R$. But then $\bar{x}=\bar{g}$, and moreover, since
\[Aj\simeq (Al)(\bar{g}^{-1})_{\varphi}\simeq Al(\bar{g}^{-1})x\] as $\bar{G}$-graded $(A,\Od R)$-bimodules, we get
\[Ajx^{-1}\simeq (Al)(\bar{g}^{-1}),\] and thus we obtain ${}^xR_{\epsilon}=T_{\rho}$. In conclusion, there is $x\in N_G(R_{\epsilon}, T_{\rho})$ such that $\bar{x}=\bar{g}$ and $\varphi=\varphi_x.$
\end{proof}

\section{Group-graded basic Morita equivalences}  \label{s:grMorita}

In the final section we prove the statements of Theorem \ref{thmB}. We show that if there is a group-graded basic Morita equivalence between two block extensions, then their extended local categories are preserved. As a consequence, we deduce the important uniqueness statement \cite[Theorem 1.8]{KuPu} and \cite[Theorem 3.5]{PuZhIII} concerning extensions of nilpotent blocks.

We start with the following two properties of fusions, motivated by the fact that a basic Morita equivalence is the composition of an embedding of $P$-interior algebras and an equivalence given by tensoring with a Dade $P$-algebra.  We state them without proof, since \cite[Lemma 1.17]{KuPu} and \cite[Proposition 2.14]{PuLof} can be easily adapted to our graded context.

\begin{proposition}\label{lem117} Let $A$ be a $\bar{G}$-graded  $P$-interior algebra having an $\Od$-basis with homogeneous elements such that $PXP=P $ and $|P\cdot x|=|P|=|x\cdot P|$ for any $x\in X$. Let $S$
be a  Dade $P$-interior algebra.

Let $R_{\epsilon}, T_{\rho}$ be local pointed groups on the $1$-component  $B$ of $A$, let $\epsilon''$, $\rho''$ be the unique local points of $R$, $T$ on $S$, and let $\epsilon'$, $\rho'$ be the unique local points of $R$, $T$ on $S\otimes B$ such that $\Br_R^S(\epsilon'')\otimes \Br_R^{B}(\epsilon)\subset \Br_R^{S\otimes B}(\epsilon ')$ and $\Br_T^S(\rho'')\otimes \Br_T^{B}(\rho)\subset \Br_T^{S\otimes B}(\rho')$. Then we have the equality
\[F_{S\otimes A}^{\bar{G}}(R_{\epsilon'}, T_{\rho'})=F_A^{\bar{G}}(R_{\epsilon}, T_{\rho})\cap \left(F_S(R_{\epsilon''}, T_{\rho''})\times \bar{G}\right)\]
\end{proposition}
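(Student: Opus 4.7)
The plan is to establish the two inclusions separately, taking advantage of two features of the setting: first, the $\bar G$-grading on $S\otimes A$ is trivial on the $S$-factor, so $(S\otimes A)_{\bar g}=S\otimes A_{\bar g}$; second, because $S$ is a Dade $P$-algebra, every subgroup $R\le P$ has a unique local point on $S$, and $S(R)$ is a simple $k$-algebra, so the product of a primitive idempotent of $\overline{B^R}$ with the unique primitive idempotent of $\overline{S^R}$ remains primitive.

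For the inclusion $\subseteq$, I start with $(\varphi,\bar g)\in F_{S\otimes A}^{\bar G}(R_{\epsilon'},T_{\rho'})$ realised by primitive idempotents $j'\in\epsilon'$ and $l'\in\rho'$, so that $(S\otimes A)j'$ is a direct summand of $(S\otimes A)l'(\bar g^{-1})_\varphi$ as $\bar G$-graded $(S\otimes A,\Od R)$-bimodules. By the Brauer-image hypothesis, $j'$ and $l'$ may be replaced, up to conjugation by units of $(S\otimes B)^R$ and $(S\otimes B)^T$, by $j''\otimes j$ and $l''\otimes l$ for suitable primitive $j\in\epsilon$, $l\in\rho$, $j''\in\epsilon''$, $l''\in\rho''$. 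Multiplying the summand relation on the left by $j''$ (and using simplicity of $S(R)$ to collapse the $S$-factor to $\Od$ up to Morita equivalence) extracts the $(A,\bar G)$-summand $Aj\mid (Al)(\bar g^{-1})_\varphi$, giving $(\varphi,\bar g)\in F_A^{\bar G}(R_\epsilon,T_\rho)$. Multiplying instead on the left by $j$ (and using the analogous collapse for $A$) extracts the ungraded $S$-summand $Sj''\mid Sl''_\varphi$, giving $\varphi\in F_S(R_{\epsilon''},T_{\rho''})$.

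For the reverse inclusion $\supseteq$, assume $(\varphi,\bar g)$ lies in both $F_A^{\bar G}(R_\epsilon,T_\rho)$ and $F_S(R_{\epsilon''},T_{\rho''})\times\bar G$, and choose idempotents realising both summand relations
\[
Aj\mid (Al)(\bar g^{-1})_\varphi,\qquad Sj''\mid Sl''_\varphi.
\]
Taking tensor products over $\Od$ and using the identification $(S\otimes A)_{\bar h}=S\otimes A_{\bar h}$, one obtains a split injection
\[
(S\otimes A)(j''\otimes j)\hookrightarrow (S\otimes A)(l''\otimes l)(\bar g^{-1})_\varphi
\]
of $\bar G$-graded $(S\otimes A,\Od R)$-bimodules. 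The Brauer-image hypothesis now guarantees $j''\otimes j\in\epsilon'$ and $l''\otimes l\in\rho'$, so this is the desired summand relation and $(\varphi,\bar g)\in F_{S\otimes A}^{\bar G}(R_{\epsilon'},T_{\rho'})$.

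The main obstacle will be the careful handling of idempotents in $(S\otimes B)^R$ and $(S\otimes B)^T$: verifying that primitivity is preserved under tensoring (which requires that $S(R)$ and $S(T)$ are simple, part of the Dade hypothesis on $S$), and that the chosen $j''\otimes j$, $l''\otimes l$ actually lie in the specified points $\epsilon'$, $\rho'$ (directly from the Brauer-image hypothesis, together with the uniqueness of the local points). The compatibility with the $\bar G$-grading in the extraction step of the forward direction relies on the homogeneous-basis hypothesis on $A$, which ensures that $Aj$ and $Al$ are free as right $\Od R$-modules in each graded component, so that the ``collapse'' functors behave well on the level of $\bar G$-graded bimodules and preserve the summand relations.
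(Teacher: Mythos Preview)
The paper does not actually prove this proposition: immediately before stating it, the authors write that Propositions~\ref{lem117} and~\ref{prop214} are ``stated without proof, since \cite[Lemma 1.17]{KuPu} and \cite[Proposition 2.14]{PuLof} can be easily adapted to our graded context.'' So there is no argument in the paper to compare against; your task is precisely to carry out that adaptation.

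Your reverse inclusion $\supseteq$ is correct: tensoring the two summand relations over $\Od$ and using $(S\otimes A)_{\bar h}=S\otimes A_{\bar h}$ gives exactly what is needed, and the Brauer-image hypothesis together with the uniqueness of local points on a Dade algebra guarantees that $j''\otimes j\in\epsilon'$ and $l''\otimes l\in\rho'$.

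The forward inclusion $\subseteq$, however, has a genuine gap. After cutting on the left by $j''\otimes 1$ you obtain
\[
j''Sj''\otimes Aj \ \big|\ j''Sl''\otimes (Al)(\bar g^{-1})_\varphi
\]
as $\bar G$-graded $(j''Sj''\otimes A,\Od R)$-bimodules, but the right $\Od R$-action is \emph{diagonal} across the two tensor factors. A Morita equivalence between $j''Sj''$ and $\Od$ kills only the \emph{left} $S$-action; it does not remove the residual right $\Od R$-module $(j''N)^*$ (respectively $\Res_\varphi(l''N)^*$, where $S=\End_\Od(N)$) sitting in the first factor. Simplicity of $S(R)$ is not the relevant input here; what is needed is the Dade property in the stronger form that $j''N$ is an indecomposable endopermutation $\Od R$-module with vertex $R$, so that $\End_\Od(j''N)$ has the trivial module as a summand and tensoring by $j''N$ is ``invertible''. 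One then tensors with $j''N$ to recover $Aj$ as a summand of something containing $(Al)(\bar g^{-1})_\varphi$; but this still requires knowing that $j''N\mid\Res_\varphi(l''N)$, i.e.\ that $\varphi\in F_S(R_{\epsilon''},T_{\rho''})$, which you have not yet established at that point. The symmetric extraction of the $S$-fusion by ``multiplying by $j$'' has the same defect, and there the $P$-free basis hypothesis on $A$ is what makes the $A$-factor behave well under the Brauer construction, not a Morita collapse.

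In short, your plan is the right shape for adapting \cite[Lemma~1.17]{KuPu}, but the two ``collapse'' steps as written do not go through; you need either to argue via the Brauer construction (using $(S\otimes B)(R)\simeq S(R)\otimes B(R)$ and simplicity of $S(R)$ to separate the local points) or to exploit the endopermutation property of $j''N$ and $l''N$ more carefully to disentangle the diagonal $R$-action.
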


\begin{proposition}\label{prop214} Let $f:A\rightarrow A'$ be an embedding of $\bar{G}$-graded $P\simeq P'$-interior algebras. Let  $R_{\epsilon}$, $T_{\rho}$ be two local pointed groups on the first component $A_1$ and let  $R'_{\epsilon'}$, $T'_{\rho'}$ be local pointed groups on $B'$ which correspond under the embedding $f$ such that $f(\epsilon)\subset \epsilon'$ and $f(\rho)\subset \rho'$. Then there exist bijections
\[E_G^{\bar{G}}(R_{\epsilon}, T_{\rho})\rightarrow E_{G'}^{\bar{G}}(R'_{\epsilon'}, T'_{\rho'})\qquad \textrm{ and } \qquad F_A^{\bar{G}}(R_{\epsilon}, T_{\rho})\rightarrow F_{A'}^{\bar{G}}(R'_{\epsilon'}, T'_{\rho'}).\]
\end{proposition}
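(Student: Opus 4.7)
\textit{Proof plan.} The plan is to reduce both bijections to the parametrization of $(A,\bar G)$-fusions given by Lemma~\ref{lem45}, using the Morita-type functor associated with the embedding. Write $e:=f(1_A)$, which is a $P$-fixed idempotent of $A'$; then $f$ realizes an isomorphism of $\bar G$-graded $P$-interior algebras $A\simeq eA'e$. Choose source idempotents $j\in\epsilon$ and $l\in\rho$ and set $j':=f(j)\in\epsilon'$ and $l':=f(l)\in\rho'$. Under the given identification $P\simeq P'$, the pointed groups $R_{\epsilon},T_{\rho}$ correspond to $R'_{\epsilon'},T'_{\rho'}$, so any $\varphi\in\Inj(R,T)$ transfers to $\varphi'\in\Inj(R',T')$, and the shifted bimodule $(Al)(\bar g^{-1})_{\varphi}$ of \ref{ss:shift} corresponds to $(A'l')(\bar g^{-1})_{\varphi'}$.

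For the bijection $F_A^{\bar G}(R_{\epsilon},T_{\rho})\to F_{A'}^{\bar G}(R'_{\epsilon'},T'_{\rho'})$, I would use the exact functor $A'e\otimes_A-$ from $\bar G$-graded left $A$-modules to $\bar G$-graded left $A'$-modules. One checks the isomorphisms
\[ A'e\otimes_A Aj\simeq A'j', \qquad A'e\otimes_A (Al)(\bar g^{-1})_{\varphi}\simeq (A'l')(\bar g^{-1})_{\varphi'} \]
of $\bar G$-graded $(A',\Od R')$-bimodules, using $ej'=j'$, $el'=l'$, and the fact that the twist by $\varphi$ only modifies the right $\Od R$-action, which commutes with the tensor product. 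Since the functor preserves direct summand decompositions, the assignment $(\varphi,\bar g)\mapsto(\varphi',\bar g)$ gives the required map. The inverse is given by left multiplication by $e$: a summand $A'j'$ of $(A'l')(\bar g^{-1})_{\varphi'}$ pulls back to $Aj=eA'j'$ as a summand of $e(A'l')(\bar g^{-1})_{\varphi'}=(Al)(\bar g^{-1})_{\varphi}$.

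For the bijection $E_G^{\bar G}(R_{\epsilon},T_{\rho})\to E_{G'}^{\bar G}(R'_{\epsilon'},T'_{\rho'})$, the cleanest approach is to combine the previous step with Proposition~\ref{prop44}: under the standing assumption that $A$ and $A'$ are block extensions (which is the setting relevant to Theorem~\ref{thmB}), the proposition gives $E_G^{\bar G}\simeq F_A^{\bar G}$ and $E_{G'}^{\bar G}\simeq F_{A'}^{\bar G}$, and composing with the $F$-bijection produces the $E$-bijection. A more direct argument mimics the proof of Proposition~\ref{prop44}: given $x\in N_G(R_{\epsilon},T_{\rho})$, form $a=b_1^{-1}x$ as in~(\ref{eq*}); then $f(a)$ is a homogeneous invertible element of $eA'e$, and applying the surjectivity part of Lemma~\ref{lem45} on the $A'$-side yields the desired $x'\in N_{G'}(R'_{\epsilon'},T'_{\rho'})$ with $\bar x'=\bar x$ and $c_{x'}=\varphi_{f(a)}$.

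The main obstacle is verifying the grading-and-twist compatibility of the Morita-type functor, namely that $A'e\otimes_A-$ is degree-preserving and intertwines the shift-twist construction $(-)(\bar g^{-1})_{\varphi}$ with $(-)(\bar g^{-1})_{\varphi'}$ under the identification $R\simeq R'$. Once this bookkeeping is settled, the remainder is a routine adaptation of the ungraded arguments of \cite[Lemma 1.17]{KuPu} and \cite[Proposition 2.14]{PuLof}.
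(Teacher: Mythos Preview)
The paper does not actually prove this proposition: it is stated without proof, with the remark that \cite[Lemma 1.17]{KuPu} and \cite[Proposition 2.14]{PuLof} ``can be easily adapted to our graded context''. Your proposal invokes the same two references and supplies an explicit outline, so it is entirely in the spirit of the paper and in fact more detailed. Your argument for the $F$-bijection via the functor $A'e\otimes_A-$ and its one-sided inverse $e\cdot(-)$ is correct (note $eA'j'=eA'ej'$ since $j'=ej'$, so one really recovers $Aj$), and is precisely the graded version of the module-theoretic argument behind \cite[Proposition 2.14]{PuLof}. In the proof of Theorem~\ref{t:main-fus} the paper uses only this $F$-part of Proposition~\ref{prop214} and then applies Proposition~\ref{prop44} separately, which matches your first route for the $E$-bijection.

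There is, however, a gap in your ``more direct'' route for the $E$-bijection. The element $f(a)$ is a homogeneous unit of $eA'e$, not of $A'$, so it does not lie in $N_{\mathrm{hU}(A')}(R'j',T'l')$ and Lemma~\ref{lem45} on the $A'$-side does not apply to it as written. Promoting $f(a)$ to a homogeneous unit of $A'$ is not automatic in the graded setting: the usual trick $f(a)+(1_{A'}-e)$ is no longer homogeneous when $\deg a\neq 1$. One needs either an extra hypothesis (e.g.\ that $A'$ is a crossed product so that homogeneous units of $eA'e$ extend) or to abandon the unit description and argue directly with the bimodule characterization in Definition~\ref{defn1}\,a), which brings you back to your first route via Proposition~\ref{prop44}. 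Since that first route already suffices for the block-extension setting relevant to Theorem~\ref{thmB}, this is a minor point, but the direct argument as stated is incomplete.
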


We can now state our main result.

\begin{thm} \label{t:main-fus} With the notations of \ref{subsec81} we assume that $A$ is $\bar{G}$-graded basic Morita equivalent to $A'$. Then $\mathcal{E}$ is equivalent to $\mathcal{E}'$.
\end{thm}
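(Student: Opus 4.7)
The plan is to construct an equivalence $\mathcal{F}\colon \mathcal{E} \to \mathcal{E}'$ whose object function is the bijection (\ref{eq9}) between local pointed subgroups of $P_{\gamma}$ and of $P'_{\gamma'}$, using the identification $P\simeq P'$ in (\ref{eq1PP'}) induced by the basic Morita equivalence. The morphism function will be obtained by translating $(G,\bar G)$-fusions into $(A,\bar G)$-fusions, transporting the latter across the source algebra embedding that defines the basic Morita equivalence, and then translating back.

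In detail, let $R_{\epsilon},T_{\rho}\leq P_{\gamma}$ correspond to $R'_{\epsilon'},T'_{\rho'}\leq P'_{\gamma'}$ under (\ref{eq9}). Proposition~\ref{prop44} gives a bijection $E_G^{\bar G}(R_{\epsilon},T_{\rho})\simeq F_A^{\bar G}(R_{\epsilon},T_{\rho})$, and similarly on the primed side. The $\bar G$-graded Morita equivalences between $A$ and $A_{\gamma}$, and between $A'$ and $A'_{\gamma'}$, from \ref{n:source} identify $F_A^{\bar G}(R_{\epsilon},T_{\rho})$ with $F_{A_{\gamma}}^{\bar G}(R_{\epsilon},T_{\rho})$ (and analogously on the primed side), since they transport the graded bimodules $Aj$ and $Al$ compatibly. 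The basic Morita equivalence supplies an embedding $f\colon A_{\gamma}\to S\otimes A'_{\gamma'}$ of $\bar G$-graded $P$-interior algebras with $S$ a Dade $P$-algebra, so Proposition~\ref{prop214} yields a bijection $F_{A_{\gamma}}^{\bar G}(R_{\epsilon},T_{\rho})\to F_{S\otimes A'_{\gamma'}}^{\bar G}(R_{\tilde{\epsilon}},T_{\tilde{\rho}})$, where $\tilde{\epsilon},\tilde{\rho}$ are the local points determined by $f$. Finally, Proposition~\ref{lem117} provides
\[
F_{S\otimes A'_{\gamma'}}^{\bar G}(R_{\tilde{\epsilon}},T_{\tilde{\rho}})=F_{A'_{\gamma'}}^{\bar G}(R_{\epsilon'},T_{\rho'})\cap\bigl(F_S(R_{\epsilon''},T_{\rho''})\times\bar G\bigr),
\]
and since $S$ is a Dade $P$-algebra the factor $F_S(R_{\epsilon''},T_{\rho''})$ contains every injection $R\to T$, so the intersection equals $F_{A'_{\gamma'}}^{\bar G}(R_{\epsilon'},T_{\rho'})$. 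Composing these bijections produces the required bijection on morphism sets, which yields full faithfulness of $\mathcal{F}$.

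The main obstacle, and the step requiring the most care, is verifying functoriality: the morphism bijection above must be compatible with composition in $\mathcal{E}$ and $\mathcal{E}'$. I would establish this by tracing a representative $a\in N_{\hU(A)}(Rj,Tl)$ of a fusion through the chain of bijections, exploiting the explicit formula $a=b_1^{-1}x$ from (\ref{eq*}) in the proof of Proposition~\ref{prop44} together with the multiplicativity of the embedding $f$ and of the Morita equivalence isomorphisms; each ingredient is manifestly compatible with multiplication of homogeneous units, and the representatives compose up to elements of the $1$-components, which project trivially to the quotient sets defining the Hom-sets. Once functoriality is confirmed, the essential surjectivity on objects (immediate from (\ref{eq9})) together with the bijections on morphism sets makes $\mathcal{F}$ an equivalence of categories, so $\mathcal{E}\simeq\mathcal{E}'$.
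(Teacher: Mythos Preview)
Your overall strategy coincides with the paper's: use~(\ref{eq9}) on objects, translate between $(G,\bar G)$-fusions and $(A,\bar G)$-fusions via Proposition~\ref{prop44}, and transport across the source-algebra embedding using Propositions~\ref{prop214} and~\ref{lem117}. Two points need attention.

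First, a minor omission: Proposition~\ref{prop44} is stated only under the hypothesis $R\simeq T$, so it does not directly handle morphisms $R_\epsilon\to T_\rho$ in $\mathcal{E}$ with $|R|<|T|$. The paper deals with this by observing that every morphism in $\mathcal{E}$ factors as an inclusion of pointed groups followed by an isomorphism, which reduces the problem to the case $R\simeq T$; you should insert that reduction before invoking Proposition~\ref{prop44}.

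Second, and more substantively, your assertion that ``since $S$ is a Dade $P$-algebra the factor $F_S(R_{\epsilon''},T_{\rho''})$ contains every injection $R\to T$'' is not correct in general. Take $R=T=P$; then the unique local idempotent is $1_S$, and unwinding Definition~\ref{defn1} shows that $\varphi\in F_S(P_{\epsilon''},P_{\rho''})$ amounts to $S\mid S_\varphi$ as $(S,\mathcal{O}P)$-bimodules, i.e.\ $N\simeq N_\varphi$ as $\mathcal{O}P$-modules, where $S=\End_{\mathcal{O}}(N)$. This fails whenever $\varphi\in\Aut(P)$ does not stabilize the class of $N$ in the Dade group (for instance for suitable $N$ when $P$ is elementary abelian of rank~$\ge 2$). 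Consequently Proposition~\ref{lem117}, combined with Proposition~\ref{prop214}, gives only the inclusion
\[
F_{A_\gamma}^{\bar G}(R_\epsilon,T_\rho)\ \subseteq\ F_{A'_{\gamma'}}^{\bar G}(R_{\epsilon'},T_{\rho'})
\]
after the identifications of~(\ref{eq1PP'}). The paper does not claim your stronger statement; it instead invokes the argument of \cite[7.6.3]{PuLo}, whose point is the \emph{symmetry} of basic Morita equivalence: there is also an embedding $A'_{\gamma'}\hookrightarrow S^{\circ}\otimes A_\gamma$ (with $S^{\circ}$ the opposite Dade $P$-algebra), and repeating the same reasoning yields the reverse inclusion, hence equality. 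You should replace the unsupported claim about $F_S$ with this two-sided argument.

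Your discussion of functoriality is a reasonable addition; the paper leaves it implicit in the cited references.
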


\begin{proof} By \ref{subsec82} (\ref{eq9}) there is a bijection between the objects of $\mathcal{E}$ and the objects of $\mathcal{E}'$ such that $R_{\epsilon}$ (which is included in $P_{\gamma}$) is mapped into $R'_{\epsilon'}$ (which is included in $P'_{\gamma'}$), with $R\simeq R'$.
Let $R_{\epsilon}, T_{\rho}\leq P_{\gamma}$, respectively $R'_{\epsilon'}, T'_{\rho'}\leq P'_{\gamma'}$ be local pointed groups which correspond under the above bijection.

Since the morphisms in $\mathcal{E}, \mathcal{E'}$ are pairs given  by compositions of ``inclusions" and isomorphisms it is enough to prove that we have a bijection between $E_G^{\bar{G}}(R_{\epsilon}, T_{\rho})$ and $E_{G'}^{\bar{G}}(R'_{\epsilon'}, T'_{\rho'})$ when $R\simeq T$.
By using Proposition \ref{lem117} and \ref{prop214}, the same arguments as in \cite[7.6.3]{PuLo} assure us that there is a bijection between
$F_A^{\bar{G}}(R_{\epsilon}, T_{\rho})$ and $F_{A'}^{\bar{G}}(R'_{\epsilon'}, T'_{\rho'})$ for any $R\simeq T$.  Next, by Proposition \ref{prop44}, for any $R\simeq T$ there is a bijection
from $F_A^{\bar{G}}(R_{\epsilon}, T_{\rho})$ to $E_G^{\bar{G}}(R_{\epsilon}, T_{\rho})$  and the same bijection exists for the pointed groups on $B'$ from $F_{A'}^{\bar{G'}}(R'_{\epsilon'}, T'_{\rho'})$ to $E_{G'}^{\bar{G'}}(R'_{\epsilon'}, T'_{\rho'})$, hence our categories are equivalent.
\end{proof}

We have already mentioned that we may consider $(A,\bar G)$-fusions on $\bar G$-graded $\Od P$-interior algebras. It is easy to see that $(G,\bar G)$-fusions and the category $\mathcal{E}$ generalize to twisted group algebras of the form $\mathcal{O}_\alpha L$, where $\alpha\in Z^2(L,k^\times)$ (that is, to $k^\times$-groups in Puig's terminology), since such algebras are still $P$-interior. Thus, we obtain the next corollary, which is \cite[Theorem 1.8]{KuPu} and \cite[Theorem 3.5]{PuZhIII}, as a consequence of Theorems \ref{t:extnilp} and \ref{t:main-fus}.

\begin{corollary} \label{c:cornilp} Assume that the block $B$ is nilpotent. Then the categories $\mathcal{E}_{(b,H,G)}$ and $\mathcal{E}_{(1,\tau(Q),L)}$ are equivalent.
\end{corollary}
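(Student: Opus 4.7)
The plan is to combine Theorem \ref{t:extnilp} with Theorem \ref{t:main-fus}, extended to twisted group algebras as indicated in the paragraph just preceding the corollary.

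First, Theorem \ref{t:extnilp} furnishes an isomorphism
\[A_\gamma \simeq S_\gamma \otimes \mathcal{O}_\alpha L\]
of $\bar G$-graded $P$-interior algebras, where $S_\gamma$ is a Dade $P$-algebra. Using the nilpotency hypothesis to identify $\bar G \simeq \tilde E \simeq L/\tau(Q)$, and identifying $P$ with $\tau(P)$ (a Sylow $p$-subgroup of $L$ and a defect group of the trivial block of $\mathcal{O}\tau(Q)$ in $L$, by the construction of $L$ in Section \ref{sec2}), this isomorphism is exactly the data required by \ref{subsec82} for a $\bar G$-graded basic Morita equivalence between $A = b\mathcal{O}G$ and $\mathcal{O}_\alpha L$. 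In particular, the bijection \ref{subsec82}(\ref{eq9}) gives the required bijection on objects of the two extended local categories, sending a local pointed subgroup $R_\epsilon$ of $P_\gamma$ on $\mathcal{O}H$ to the corresponding local pointed subgroup of $\tau(P)$ on $\mathcal{O}\tau(Q)$.

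Next, I would invoke Theorem \ref{t:main-fus} to transfer this bijection to a genuine equivalence of categories. Although that theorem is formally stated for block extensions of the form $b\mathcal{O}G, b'\mathcal{O}G'$, its proof uses this hypothesis only through Propositions \ref{lem117}, \ref{prop214} and the chain Lemma \ref{lem45}/Proposition \ref{prop44}, each of which depends solely on the $\bar G$-graded $P$-interior structure together with the existence of a homogeneous $\mathcal{O}$-basis satisfying $PXP = P$ and $|P\cdot x| = |P| = |x\cdot P|$, and on the non-vanishing of $\mathrm{Br}_R$ on source idempotents. Both hold for $\mathcal{O}_\alpha L$: a set of units lifting the elements of $L$ provides the required homogeneous basis (with $\alpha$ normalized so $\tau(P) \le L$ acts by honest multiplication), and local points in $\tau(P)$ on $\mathcal{O}\tau(Q)$ behave as in the classical setting. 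Applying Theorem \ref{t:main-fus} with $A' := \mathcal{O}_\alpha L$, $G' := L$, $H' := \tau(Q)$ therefore yields the desired equivalence $\mathcal{E}_{(b,H,G)} \simeq \mathcal{E}_{(1,\tau(Q),L)}$.

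The main obstacle is not conceptual but bookkeeping: one must verify that the hypothesis ``$A$ and $A'$ are block extensions'' in Theorem \ref{t:main-fus} can be relaxed to ``$A,A'$ are $\bar G$-graded $P$-interior algebras of the above type'', so that $\mathcal{O}_\alpha L$ qualifies as $A'$, and that Remark \ref{r:conjQ} together with the grading identification $\bar G \simeq L/\tau(Q)$ makes the bijection on morphism sets produced by Theorem \ref{t:main-fus} compatible with composition. Once these routine compatibilities are in place, the corollary follows at once, recovering \cite[Theorem 1.8]{KuPu} and \cite[Theorem 3.5]{PuZhIII} as the authors announced.
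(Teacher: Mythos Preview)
Your proposal is correct and follows precisely the route the paper takes: deduce the corollary from Theorems \ref{t:extnilp} and \ref{t:main-fus}, after noting (as the paragraph preceding the corollary does) that the framework of $(A,\bar G)$-fusions, $(G,\bar G)$-fusions, and the category $\mathcal{E}$ extends to twisted group algebras $\mathcal{O}_\alpha L$. Your discussion of why Propositions \ref{lem117}, \ref{prop214}, Lemma \ref{lem45}, and Proposition \ref{prop44} continue to apply in that setting is more explicit than the paper's one-line justification, but the underlying argument is the same.
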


Finally, we show that the twisted group algebras $\mathcal{O}_\alpha L$ defined in Section \ref{sec2} is invariant under graded basic Morita equivalences, thus proving statements b) and c) of Theorem \ref{thmB}. We denote by $\mathcal{O}_{\alpha'}L'$ the twisted group algebra obtained from $A'$.

\begin{corollary} \label{c:corLunique} Assume that $A$ is $\bar{G}$-graded basic Morita equivalent to $A'$. Then there is an isomorphism $L\simeq L'$ as extensions of $Z(Q)\simeq Z(Q')$ by $E=N_G(Q_\delta)/C_H(Q)\simeq  N_{G'}(Q'_{\delta'})/C_{H'}(Q')$, and with this identification, $[\alpha]=[\alpha']$ in $H^2(\tilde E, k^\times)$.
\end{corollary}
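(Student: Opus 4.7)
By Theorem~\ref{t:main-fus}, the basic Morita equivalence induces an equivalence of extended local categories $\mathcal{E} \simeq \mathcal{E}'$, and via the bijection (\ref{eq9}) the local pointed group $Q_\delta$ corresponds to $Q'_{\delta'}$. This yields compatible identifications $Q \simeq Q'$, $Z(Q) \simeq Z(Q')$, $P \simeq P'$ (from (\ref{eq1PP'})), $E \simeq E'$, and $\tilde E \simeq \tilde E'$, all respecting the conjugation actions of these groups on $Q$ and $Z(Q)$. I make these identifications tacitly in what follows.

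For part (b), recall that $L$ is defined by the class $[L] \in H^2(E, Z(Q))$ whose restriction to $H^2(PC_H(Q)/C_H(Q), Z(Q))$ is the class $\bar h$ of the extension $1 \to Z(Q) \to P \to P/Z(Q) \to 1$. This class $[L]$ is in fact unique: since $Z(Q)$ is a $p$-group, $H^2(E, Z(Q))$ is $p$-torsion, and since $PC_H(Q)/C_H(Q) \simeq P/Z(Q)$ is a Sylow $p$-subgroup of $E$ (by \cite[Proposition 5.3]{KuPu}), the restriction map is split by the transfer up to a $p'$-unit, hence injective. Under the identifications of the first paragraph, $\bar h$ corresponds to $\bar h'$ (the extension class being determined by $P$ with the normal subgroup $Z(Q)$), so $[L] = [L']$ and $L \simeq L'$ as extensions of $Z(Q)$ by $E$.

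For part (c), the plan is to extract $[\alpha]$ from the $\tilde E$-graded $P$-interior algebra structure preserved by the basic Morita equivalence. Combining the Fong-Reynolds reduction (Proposition~\ref{propFong}) with Corollary~\ref{c:cornormaldef}, the source algebra of the Brauer correspondent block extension $\mathcal{O}N_G(Q_\delta)b_\delta$ decomposes as $S_{\gamma'} \otimes \mathcal{O}_\alpha L$ as a $\tilde E$-graded $P$-interior algebra, and similarly on the primed side. The crucial step is to show that the $\bar G$-graded basic Morita equivalence between $A$ and $A'$ induces a $\bar G$-graded basic Morita equivalence between the Brauer correspondent block extensions $\mathcal{O}N_G(Q_\delta)b_\delta$ and $\mathcal{O}N_{G'}(Q'_{\delta'})b'_{\delta'}$; this can be achieved by applying the Brauer construction at $Q$ to the bimodule inducing the equivalence and using the functorial properties of basic equivalences together with the vertex identification from \ref{subsec82}. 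The resulting isomorphism $S_{\gamma'} \otimes \mathcal{O}_\alpha L \simeq S'_{\gamma'} \otimes \mathcal{O}_{\alpha'} L'$ of $\tilde E$-graded $P$-interior algebras, after passing to the graded radical quotient (where the Dade $P$-algebra factors collapse to $k$), yields $k_\alpha \tilde E \simeq k_{\alpha'} \tilde E'$ as $\tilde E$-graded $k$-algebras, and therefore $[\alpha] = [\alpha']$ in $H^2(\tilde E, k^\times)$.

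The main obstacle is establishing the descent of the basic Morita equivalence to the Brauer correspondent block extensions. A direct approach analyzes the Brauer construction of the bimodule $M$ and its source $\ddot N$, exploiting the vertex data $\ddot{P} \simeq P \simeq P'$ and the compatibility between the $(G,H,b)$- and $(G',H',b')$-Brauer pairs that is forced by the bijection (\ref{eq9}). Alternatively, one uses the embedding of source algebras from Proposition~\ref{p:tibi} and its primed analog; its hypothesis $N_G(Q_\delta)\setminus N_G(P_\gamma) \subseteq H$ may require additional reduction steps in the general case. The cleanest formulation is probably to observe that the $\mathcal{O}_\alpha L$ factor is intrinsically determined as the non-Dade part of the $\tilde E$-graded $P$-interior source algebra of the Brauer correspondent, and that this factor is manifestly invariant under tensoring with Dade $P$-algebras, which is how the basic Morita equivalence acts on source algebras.
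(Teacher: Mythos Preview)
Your argument for part (b) is correct and in fact more transparent than the paper's. The paper establishes $L\simeq L'$ by invoking the final part of the proof of \cite[Theorem 3.5]{PuZhIII} and \cite[Lemma 3.6]{PuZhIII}, after obtaining the equivalences of local categories. Your observation that $[L]$ is \emph{uniquely} determined by its restriction $\bar h$---because $Z(Q)$ is a $p$-group and restriction to a Sylow $p$-subgroup is injective on $H^2(E,Z(Q))$---bypasses this and gives the isomorphism of extensions directly once $P$, $Z(Q)$ and the $E$-action are identified.

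For part (c) your overall strategy coincides with the paper's: descend the $\bar G$-graded basic Morita equivalence to an $\tilde E$-graded basic Morita equivalence between the Brauer correspondents $\mathcal{O}N_G(Q_\delta)b_\delta$ and $\mathcal{O}N_{G'}(Q'_{\delta'})b'_{\delta'}$. The paper obtains this descent by citing \cite[Theorem 1.2]{CoMaTo}, which is exactly the ``functorial'' statement you are sketching; your allusion to Proposition~\ref{p:tibi} is a detour that is neither needed nor applicable here. The gap is in your endgame. Passing to the graded radical quotient of $S_{\gamma'}\otimes\mathcal{O}_\alpha L$ yields $\bar S_{\gamma'}\otimes k_\alpha\tilde E$, where $\bar S_{\gamma'}$ is a full matrix algebra over $k$, not $k$; the Dade factors do \emph{not} ``collapse to $k$''. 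So you do not directly obtain an isomorphism $k_\alpha\tilde E\simeq k_{\alpha'}\tilde E$ of $\tilde E$-graded $k$-algebras from the source-algebra comparison. The paper closes this cleanly: under the $\tilde E$-graded Morita equivalence the unique simple $kQC_H(Q)b_\delta$-module $\bar V$ corresponds to $\bar V'$, and graded Morita equivalences preserve Clifford extensions (\cite[Theorem 5.1.8]{Ma}), whence $k_\alpha\tilde E\simeq k_{\alpha'}\tilde E$ as $\tilde E$-graded algebras and $[\alpha]=[\alpha']$. Your route can be repaired along these lines, or alternatively as in the paper's final Remark by showing the connecting Dade $P$-algebra is similar to $k$ via \cite[Lemma 4.5]{Pu1}, but as written the ``collapse'' step does not go through.
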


\begin{proof} By Theorem \ref{t:main-fus} we have defect pointed groups $Q'_{\delta'}\le P'_{\gamma'}$ corresponding to $Q_{\delta}\le P_{\gamma}$ such that $P\simeq P'$. By \cite[Theorem 1.2]{CoMaTo}, we have the isomorphism $\tilde E=\tilde E_G^{\bar G}(Q_\delta)\simeq \tilde E_{G'}^{\bar G}(Q'_{\delta'})$, and  there is an $\tilde E$-graded basic Morita equivalence between $kN_{G}(Q_{\delta})b_{\delta}$ and $kN_{G'}(Q'_{\delta'})b'_{\delta'}$. Under this equivalence, the unique simple  $kQC_H(Q)b_\delta$-module $\bar V$ corresponds to the unique simple  $kQC_{H'}(Q')b'_{\delta'}$-module $\bar V'$ (see  \ref{d:cocycle-beta}). By \cite[Theorem 5.1.8]{Ma}, the Clifford extensions of $\bar V$ and $\bar V'$ are isomorphic, hence $[\alpha]=[\alpha']$ in $H^2(\tilde E, k^\times)$.

By Theorem \ref{t:main-fus} and Corollary \ref{c:cornilp} we have the equivalences
\[\mathcal{E}_{(b_\delta,QC_H(Q),N_G(Q_\delta))} \simeq \mathcal{E}_{(b'_{\delta'},Q'C_{H'}(Q'),N_{G'}(Q'_{\delta'}))} \]\[
\mathcal{E}_{(1,\tau(Q),L)} \simeq \mathcal{E}_{(1,\tau'(Q'),L')} \]
of categories.  Since in particular, \[L/\tau(Z(Q))\simeq N_G^{\bar G}(Q_\delta)/C_H(Q)\simeq  N_{G'}^{\bar G}(Q'_{\delta'})/C_{H'}(Q)\simeq L'/\tau(Z(Q')),\]
we may now use the argument in the final part of the proof of \cite[Theorem 3.5, p. 820]{PuZhIII} and \cite[Lemma 3.6]{PuZhIII} to deduce the isomorphism $L\simeq L'$ of group extensions.
\end{proof}

\begin{remark} Under the assumptions of Theorem \ref{t:main-fus}, from the $\tilde E$-graded basic Morita equivalence between $kN_{G}(Q_{\delta})b_{\delta}\simeq kN_{G'}(Q'_{\delta'})b'_{\delta'}$, \cite[Theorem 3.1]{CoMa} and Corollary \ref{c:cornormaldef}, we immediately get an embedding $k_{\alpha'} L'\to T\otimes k_\alpha L$ of $\tilde E$-graded $P$-interior algebras, for some Dade $P$-algebra $T$. Then, by \cite[Lemma 4.5]{Pu1}, we have that $T$ is similar to $k$.
\end{remark}

\bigskip\noindent{\bf Funding.} \ The last author was partially supported by a grant of Ministery of Research and Innovation, CNCS - UEFISCDI, project number PN-III-P1-1.1-TE-2016-0124, within PNCDI III.

\bigskip\noindent{\bf Acknowledgements.} \ The last two authors would like to thank Charles Eaton and Michael Livesey for valuable discussions and their kind hospitality during the visit to the School of Mathematics, University of Manchester.


\end{document}